\documentclass[11pt, a4paper]{amsart}

\usepackage[T1]{fontenc}
\usepackage[utf8]{inputenc}
\usepackage{lmodern}

\usepackage[american]{babel}
\usepackage[babel, final]{microtype}
\usepackage{amssymb}
\usepackage[all]{xy}

\usepackage{mathtools}

\usepackage{thmtools, thm-restate}

\usepackage{ifpdf}
\usepackage{comment}
\usepackage{multirow}

\usepackage{enumitem}
%\usepackage{shortlabels}

% For strikeout text use \sout{...}
% To use \uline for underlining,
% but have \em and \emph still produce normal italics, load ulem with
% \usepackage[normalem]{ulem}, or declare \normalem in the preamble.
\usepackage[normalem]{ulem}

\usepackage[pdftex, dvipsnames]{xcolor}
\usepackage[pdftex, final]{graphicx}
\usepackage{caption}
\usepackage{subcaption}
\usepackage{pinlabel}
\usepackage[pdftex,%
    a4paper,%
    includehead,%
    includefoot,%
    nomarginpar,%
    lmargin=0.9in,%
    rmargin=0.9in,%
    tmargin=1in,%
    bmargin=1in,%
]{geometry}
\usepackage[pdftex,%
    final,%
    colorlinks=true,%
    linkcolor={red!45!black},%
    citecolor=NavyBlue,%
    filecolor=NavyBlue,%
    menucolor=NavyBlue,%
    urlcolor={blue!45!black},%
    bookmarks=true,%
    bookmarksdepth=3,%
    bookmarksnumbered=true,%
    bookmarksopen=true,%
    bookmarksopenlevel=2,%
]{hyperref}
\hypersetup{
    pdftitle={Unknotting numbers of 2-spheres in the 4-sphere},
    pdfauthor={Jason Joseph, Michael Klug, Benjamin Ruppik, Hannah Schwartz},
    pdfsubject={Unknotting numbers of 2-spheres in the 4-sphere},
    pdfkeywords={Unknotting numbers}
}
\usepackage{amsmath}
\usepackage{cleveref}

%~~~~~~~~~~~~~~~~~~~~~~~~~~~~~~~~~~~~~~~~~~~~ bibliography
\usepackage{csquotes}
\usepackage[
	backend=bibtex,
	style=alphabetic,
	sorting=nyt,
	giveninits=true,
	maxnames=10,
	%articlein=false,
	url=false,
	backref=true
]{biblatex}
\DefineBibliographyStrings{english}{
  backrefpage={$\uparrow$},
  backrefpages={$\uparrow$}
}

%\renewbibmacro{in:}{%
%	\ifentrytype{article}{}{\printtext{\bibstring{in}\intitlepunct}}}

\DeclareFieldFormat[article,incollection,inproceedings]{title}{#1}
\DeclareFieldFormat[unpublished]{title}{\textit{#1}}

\addbibresource{references.bib}

%~~~~~~~~~~~~~~~~~~~~~~~~~~~~~~~~~~~~~~~~~~~~

\usepackage{tikz}
\usetikzlibrary{tikzmark}
\usepackage{tikz-cd}
\usetikzlibrary{decorations.pathmorphing}

\usepackage{tikzsymbols}

\usepackage{listings}

\usepackage{float}
\usepackage{tabularx}
\usepackage[percent]{overpic}

%%%%%%%%%%%%%%%%%%%%%%%%%%%%%%%%%%
% WATERMARK
%%%%%%%%%%%%%%%%%%%%%%%%%%%%%%%%%%
% \usepackage{draftwatermark}

% \usepackage[style=iso]{datetime2}

% \SetWatermarkText{Draft: \today}
% \SetWatermarkColor[gray]{0.9}
% \SetWatermarkFontSize{1cm}
% \SetWatermarkAngle{90}
%\SetWatermarkHorCenter{20cm}
%%%%%%%%%%%%%%%%%%%%%%%%%%%%%%%%%%

\input{macros}
% Master commands file
% Dependencies: none

% Text

% Hyphenation.
\hyphenation{Thurs-ton}
\hyphenation{mo-no-poles}
\hyphenation{sur-ger-y}

%-------------------------------------------------------------------------------

% Pairs of openings and closures

% Synonyms for symbols
                  % mathord
              % mathord
%\renewcommand{\phi}{\varphi}                 % mathord
                  % mathord
             % mathord
%\newcommand{\eset}{\emptyset}                % mathord

                 % mathbin
           % mathop
                % mathbin
          % mathop
                    % mathbin
              % mathop
                % mathbin
          % mathop

                    % mathbin
                  % mathbin

                    % mathrel
                  % mathrel
                  % mathrel
             % mathrel
              % mathrel
                  % mathrel

% Special sets

% Punctuation
%\newcommand{\co}{\colon}
%\newcommand{\semico}{;\penalty 300}

% Common operators

 % Image

 % Homology
 % Homomorphism

 % Rank
 % Hochschild homology
 % Mapping class group

% Matrix groups

% Lie algebras

% Basic topology

 % mathord
 % Originally mathord

 % Originally mathord

 % Originally mathbin
 % Quasi-alternating links

% Spheres, disks, ...

% Shortcuts for blackboard letters
\newcommand{\ZZ}{\mathbb{Z}}

\newcommand{\fig}[3]{\begin{figure}\includegraphics[height=#1pt]{#2}#3\end{figure}}

% Shortcuts related to Finger-Whitney unknotting numbers

\newcommand{\ufw}{u_{\textup{cw}}} % (geometric) Casson-Whitney number
\newcommand{\ust}{u_{\textup{st}}}
\newcommand{\afw}{a_{\textup{cw}}} % algebraic Casson-Whitney number
\newcommand{\asta}{a_{\textup{st}}}

\newcommand{\fus}{\operatorname{fus}} % Fusion number

\newcommand{\dsing}{d_{\textup{sing}}} % Singh's singularity distance

%numbering theorems with letters

\setlength{\marginparwidth}{2cm}

\title{
    Unknotting numbers of $2$-spheres in the $4$-sphere
}

\author{Jason M. Joseph}
\address{Rice University, Houston, Texas 77005}
\email{\href{mailto:jason.joseph@rice.edu}{jason.joseph@rice.edu}}
\urladdr{\url{https://profiles.rice.edu/faculty/jason-joseph}}

\author{Michael R. Klug}
\address{University of Chicago, Chicago, Illinois 60637}
\email{\href{mailto:michaelklug@uchicago.edu}{michaelklug@uchicago.edu}}
\urladdr{\url{https://mathematics.uchicago.edu/people/profile/michael-klug/}}

\author{Benjamin M. Ruppik}
\address{Max-Planck-Institut f{\"ur} Mathematik, Bonn, Germany}
\email{\href{mailto:bruppik@mpim-bonn.mpg.de}{bruppik@mpim-bonn.mpg.de}}
\urladdr{\url{http://bruppik.de}}

\author{Hannah R. Schwartz}
\address{Princeton University, Princeton, New Jersey 08544}
\email{\href{mailto:hs25@princeton.edu}{hs25@princeton.edu}}
\urladdr{\url{https://www.math.princeton.edu/people/hannah-schwartz}}

% % %
% Adding another \thanks{} environment produces another footnote line 
\thanks{JJ, MK, BR \& HS were supported by the
Max Planck Institute for Mathematics in Bonn.}
% % %

\keywords{
$2$-knots,
stabilization,
Finger \& Whitney moves,
regular homotopy,
unknotting number}

\def\subjclassname{\textup{2020} Mathematics Subject Classification}
\expandafter\let\csname subjclassname@1991\endcsname=\subjclassname
\expandafter\let\csname subjclassname@2000\endcsname=\subjclassname
\subjclass{
    57K10, % Knot theory
    57K40, % General topology of 4-manifolds
    57K45, % Higher-dimensional knots and links
    57R42, % Immersions in differential topology
    57R52 % Isotopy in differential topology
    \hfill Date: \today
}

\begin{document}

\begin{abstract}
    We compare two naturally arising notions of ``unknotting number'' for $2$-spheres
    in the $4$-sphere: namely, the minimal number of $1$-handle stabilizations
    needed to obtain an unknotted surface,
    and the minimal number of Whitney moves required in a regular homotopy
    to the unknotted $2$-sphere.
    We refer to these invariants as the
    \emph{stabilization number}
    and the 
    \emph{Casson-Whitney number}
    of the sphere, respectively.
    Using both algebraic and geometric techniques,
    we show that the stabilization number is bounded above
    by one more than the Casson-Whitney number.
    We also provide explicit families of spheres for which these invariants are equal,
    as well as families for which they are distinct. 
    Furthermore, we give additional bounds for both invariants,
    concrete examples of their non-additivity,
    and applications to classical unknotting number
    of 1-knots.
\end{abstract}

\maketitle

% % % % % % % % % % % %
\section{Introduction and Motivation}
\label{sec:motivation}
% % % % % % % % % % % %

This paper compares and relates a slew of algebraic and geometric measures
of complexity of $2$-knots in the $4$-sphere.
What has traditionally been called the ``unknotting number''
of a $2$-knot $K \subset S^4$, which we call the \emph{stabilization number $\ust(K)$},
records the minimal number of stabilizations of $K$
required to obtain a smoothly embedded surface that bounds
a solid handlebody \cite{hosokawa1979numerical}.
This is analogous to the minimal number of $1$-dimensional stabilizations
(i.e.\ band attachments) of a $1$-knot needed to obtain an unlink.
This is bounded above by, but is \emph{not} in general equal to,
the classical unknotting number:
indeed there are many examples of low-crossing
knots for which this inequality is strict.

The classical unknotting number of a $1$-knot embedded in the $3$-sphere records the minimal number
of double points that occur during any regular homotopy to the unknot.
The analogue we consider in the $4$-dimensional setting
is the minimal number of Whitney moves needed
in a regular homotopy taking a 2-knot $K$ to the unknot
(double points are introduced/removed by a finger move/Whitney move). We call the minimal number of Whitney moves the \emph{Casson-Whitney number $\ufw(K)$} of the knot $K$, since techniques for manipulating finger moves (the inverse homotopy to a Whitney move) were pioneered by Casson \cite{casson1986three}. In \autoref{sec:inequality}, we use recent results of \cite{singh2019distances} to obtain the following relationship between the stabilization number and the Casson-Whitney number. 

\begin{restatable}{theorem}{plus_one}
    \label{thm:1} 
    For any $2$-knot $K$, $\ust(K) \leq \ufw(K) +1$. 
\end{restatable}

\smallskip

A careful manipulation of simple regular homotopies to the unknot
in \autoref{geo} also gives settings in which this inequality is always strict. 
 
\begin{restatable}{theorem}{equal_one}
    \label{thm:2} 
    Any $2$-knot $K$ with $\ufw(K)=1$ also has $\ust(K)=1$.  
\end{restatable}
 
\smallskip

Moreover, by considering the effect of finger moves and stabilizations on the fundamental group of the complement, we are able to find examples of $2$-knots for which equality of these unknotting invariants does \emph{not} hold. 

\begin{restatable}{theorem}{nonequal}
    \label{thm:3}
    There are infinitely many $2$-knots $K$ with $\ust(K)=1$ and $\ufw(K)=2$. 
\end{restatable}

\smallskip

In \autoref{geo}, we also give some special families of 2-knots in which
we can bound both the stabilization number and the Casson-Whitney number from above,
using explicit geometric constructions.
For instance, we find that the fusion number of a ribbon $2$-knot
is an upper bound for the Casson-Whitney number.

\begin{restatable}{theorem}{fusionbound}
    \label{thm:fusion_upper_bound}
    For a ribbon $2$-knot $K$,
    $\ufw(K) \le \fus(K)$.
\end{restatable}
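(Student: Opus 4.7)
The plan is to construct an explicit regular homotopy from $K$ to an unknotted $2$-sphere using at most $\fus(K)$ Whitney moves, by working directly with a ribbon presentation of $K$. I will prove the stronger statement that any ribbon presentation of $K$ by $n+1$ trivially embedded $2$-spheres $U_0,\dots,U_n$ and $n$ embedded fusion bands $b_1,\dots,b_n$ yields such a regular homotopy with at most $n$ Whitney moves; the theorem then follows by choosing a presentation realizing $n = \fus(K)$.

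The argument will proceed by induction on $n$. The base case $n = 0$ is immediate since then $K = U_0$ is unknotted. For $n \ge 1$, I will arrange the fusion bands so that their dual graph is a tree and pick a leaf sphere $U_n$ attached to the rest of the configuration by a single band $b_n$ with other endpoint on some $U_i$. Because $U_n$ is unknotted and split from the other spheres of the presentation, it bounds an embedded $3$-ball $B_n \subset S^4$ which, after a preparatory isotopy exploiting the tree structure, I may take to meet the rest of the configuration only in a small collar of the attaching foot of $b_n$.

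The key geometric step is the following \emph{absorption move}: a regular homotopy supported in a neighborhood of $U_n \cup b_n$, consisting of exactly one finger move followed by exactly one Whitney move, whose combined effect is to replace the sphere-on-a-stick $U_n \cup b_n$ by a trivial disk in $U_i$. Concretely, the finger move pushes a small disk of $K$ near the foot of $b_n$ along an arc that traverses $B_n$, creating a paired transverse self-intersection of $K$; the $3$-ball $B_n$, once suitably trimmed near the band, provides an embedded Whitney disk pairing these two double points. After the Whitney move, the resulting embedded $2$-knot $K'$ admits a ribbon presentation by $U_0,\dots,U_{n-1}$ and $b_1,\dots,b_{n-1}$, hence has a ribbon presentation with one fewer band. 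Applying the inductive hypothesis to $K'$ gives $\ufw(K') \le n - 1$, and concatenation with the single Whitney move of the absorption step produces the desired regular homotopy from $K$ to the unknot with at most $n$ Whitney moves.

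The main obstacle will be the absorption move itself: one must design the finger move so that the Whitney disk inherited from $B_n$ is embedded in $S^4$ and meets $K$ precisely along the two paired arcs on the sheets emerging from the finger move, and so that the resulting $K'$ genuinely inherits the claimed smaller ribbon presentation. This amounts to a careful local analysis near a leaf of the fusion tree, using both the unknottedness of $U_n$ and its splitness from the other spheres to arrange $B_n$ as a valid Whitney disk disjoint from the remaining bands.
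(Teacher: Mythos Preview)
Your inductive strategy is genuinely different from the paper's, which does not peel off leaf spheres one at a time. Instead, the paper performs all $n$ finger moves simultaneously along \emph{trivial} arcs from the last component to each of the others, uses the finger-move relations (Lemma~2.10) to see that the resulting immersed $2$-component link has group $\mathbb{Z}\oplus\mathbb{Z}$, and then exploits this abelian group to isotope the fusion arcs to trivial arcs one by one (via boundary twists, Remark~2.7), undoing one finger move at each stage.

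The gap in your proposal is the absorption move. Your key claim---that after a preparatory isotopy the $3$-ball $B_n$ meets the configuration only in a collar of the foot of $b_n$---is not justified and will fail in general. Splitness of $U_n$ from the other \emph{spheres} does let you take $B_n$ disjoint from $U_0,\dots,U_{n-1}$, but it says nothing about the \emph{bands}: the guiding arcs $\alpha_1,\dots,\alpha_n$ are arbitrary words in the meridians of the unlink, and whenever the meridian $m_n$ of $U_n$ appears with nonzero exponent sum in some $\alpha_j$, that arc has nonzero algebraic intersection with $B_n$ and cannot be isotoped off $B_n$ in the complement of the unlink. Consequently the tube $b_j\subset K$ meets $\mathrm{int}(B_n)$, and any $2$-disk you carve out of $B_n$ to serve as a Whitney disk will have interior intersections with $K$. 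Even granting a clean $B_n$, you have not specified the finger arc and Whitney disk precisely enough to see that the result is the ribbon knot on $U_0,\dots,U_{n-1}$ with bands $b_1,\dots,b_{n-1}$: a finger move along a chord of $B_n$ with both endpoints on $U_n$ followed by a Whitney move along a disk in $B_n$ interacts only with $U_n$ and will not alter the tube $b_n$ at all. The paper's simultaneous-finger-move argument sidesteps exactly this difficulty by first forcing the fundamental group to become abelian, so that every guiding arc becomes trivial up to boundary twists.
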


The analogous result for the stabilization number $\ust(K) \le \fus(K)$ is due to Miyazaki \cite{miyazaki1986relationship}. 
In \autoref{sec:algebraic_lower_bounds}, we develop
the {\it algebraic Casson-Whitney number $\afw$},
a natural lower bound for $\ufw$,
and prove that for a pair of 2-knots,
both admitting a Fox coloring, the Casson-Whitney number
of their connected sum must be at least 2. A {\it Fox coloring} of a 2-knot is a surjection from the fundamental group of its complement onto a dihedral group, which sends meridians of the 2-knot to reflections. The {\it determinant} of a 2-knot is the evaluation of its Alexander ideal $\Delta(K)$ at $t=-1$, and as in the classical case a 2-knot $K$ has a $p$-coloring for prime $p$ if and only if $p$ divides its determinant $\Delta(K)|_{-1}$ \cite{joseph20190concordance}.
Thus, Casson-Whitney number one 2-knots cannot be factored
into a connected sum of two 2-knots, each with nontrivial determinant
(cf.\ the result of Scharlemann that unknotting number one 1-knots
are prime \cite{scharlemann1985unknotting}).

\begin{restatable}{theorem}{algadd}
    \label{thm:algadd} 
    Let $K_1,K_2$ be 2-knots with determinants
    $\Delta(K_i)|_{-1}\neq 1$.
    Then $\ufw(K_1\cs K_2)\geq 2$.
\end{restatable}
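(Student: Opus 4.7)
The plan is to invoke the algebraic Casson-Whitney number $\afw$ introduced in \autoref{sec:algebraic_lower_bounds}, which furnishes a lower bound $\afw(K)\le\ufw(K)$ for every $2$-knot. It then suffices to prove the purely algebraic statement that $\afw(K_1\#K_2)\ge 2$ whenever each $K_i$ has nontrivial determinant. The anticipated content of $\afw$ is that it tracks (a version of) the minimum number of meridional generators of $\pi_1(S^4\setminus K)$ beyond the single meridian of the unknot, or equivalently, the minimum number of generators of the Alexander module $A(K)=H_1$ of the infinite cyclic cover of $S^4\setminus K$; each finger move in a regular homotopy from the unknot to $K$ contributes at most one such generator.

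To implement the plan, I would first record the Mayer-Vietoris calculation $A(K_1\#K_2)\cong A(K_1)\oplus A(K_2)$ for the infinite cyclic cover of the split complement, then argue by contradiction. Suppose $A(K_1)\oplus A(K_2)$ were generated by a single element over $\Lambda=\Z[t,t^{-1}]$. Projecting this generator to each factor would force each $A(K_i)\cong \Lambda/I_i$ to be cyclic, and the Chinese Remainder Theorem for modules would yield $I_1+I_2=\Lambda$. The determinant hypothesis $\Delta(K_i)(-1)\neq\pm 1$ is exactly the statement that $A(K_i)/(t+1)A(K_i)$ is a nontrivial finite abelian group of order $|\Delta(K_i)(-1)|$, so each $I_i$ lies inside a maximal ideal of the form $(t+1,p_i)$ for some prime $p_i$ dividing $\det(K_i)$. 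This is the algebraic manifestation of a Fox $p_i$-coloring of $K_i$.

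The main obstacle is the case in which $p_1\neq p_2$: then $(t+1,p_1)$ and $(t+1,p_2)$ are distinct maximal ideals, and the ideals $I_1,I_2$ need not lie in a common maximal ideal, so the naive cyclicity obstruction at the level of $\Lambda$-modules breaks down (indeed, as $\Z$-modules $\Z/p_1\oplus\Z/p_2$ is already cyclic by CRT). To close this gap, I would pass from Alexander modules to non-abelian quotients, using the actual Fox colorings $\rho_i\colon\pi_1(S^4\setminus K_i)\twoheadrightarrow D_{2p_i}$. By the Seifert-van Kampen decomposition $\pi_1(S^4\setminus(K_1\#K_2))\cong\pi_1(S^4\setminus K_1)*_{\Z}\pi_1(S^4\setminus K_2)$ amalgamated along the common meridian, the $\rho_i$ assemble into a surjection onto the amalgamated product $D_{2p_1}*_{\Z/2}D_{2p_2}$. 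One then checks that this target cannot be generated by two meridian-type elements (i.e.\ two conjugates of the shared order-$2$ reflection), contradicting $\afw(K_1\#K_2)\le 1$. This final step---establishing the meridional rank lower bound for the amalgamated dihedral quotient---is where I expect the real work to lie.
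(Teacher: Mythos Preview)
Your proposal contains a genuine conceptual gap: you have misidentified what $\afw$ measures. By \autoref{def:algcw}, $\afw(K)$ is the minimal number of \emph{relations} of the form $[x,y]=1$ (with $x,y$ meridians) needed to abelianize $\pi K$; it is not a count of meridional generators, nor is it the Nakanishi index $m(K)$. Your first outline is really a computation of $m(K_1\#K_2)$, and as you correctly observe this fails when $p_1\neq p_2$. But your proposed fix inherits the same confusion: showing that $D_{2p_1}\ast_{\Z/2}D_{2p_2}$ ``cannot be generated by two conjugates of the reflection'' is a meridional-rank statement and does \emph{not} contradict $\afw(K_1\#K_2)\le 1$. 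What $\afw\le 1$ actually says is that there exist meridians $x,y$ with $\pi(K_1\#K_2)/\langle\langle[x,y]\rangle\rangle\cong\Z$; pushed forward, this means some single commutator $[\bar x,\bar y]$ of reflections has normal closure containing the entire commutator subgroup of the dihedral amalgam. That is the statement you must rule out, and it is logically independent of whether $\bar x,\bar y$ generate.

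The paper proceeds exactly through the quotient $G\cong(\Z_{p_1}\ast\Z_{p_2})\rtimes\Z_2$ you wrote down, but then attacks the correct problem. Writing $\bar y=\bar x^{v}$ with $v$ in the commutator subgroup $\Z_{p_1}\ast\Z_{p_2}$, the key identity is
\[
[\bar x,\bar x^{v}]=[\bar x,v]^{2},
\]
valid because $\bar x$ is an involution. One then checks that the normal closure in $G$ of $[\bar x,v]^2$ equals the normal closure in $\Z_{p_1}\ast\Z_{p_2}$ of the single square $g^2$ with $g=[\bar x,v]$. At this point the Freiheitssatz of Fine--Howie--Rosenberger for one-relator quotients of free products of cyclic groups guarantees that $(\Z_{p_1}\ast\Z_{p_2})/\langle\langle g^2\rangle\rangle$ is nontrivial, so $G/\langle\langle[\bar x,\bar x^v]\rangle\rangle$ is nonabelian. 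These two ingredients---the square identity and the appeal to the Freiheitssatz---are the substance of the argument, and neither appears in your outline.
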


Miyazaki found 2-knots $K_1,K_2$ with $\ust(K_i)=1$ but $\ust(K_1\cs K_2)=1$
as well \cite{miyazaki1986relationship}.
Since his examples have nontrivial determinants,
these examples together with the above theorem imply \autoref{thm:3}.
The non-additivity of both $\ust$ and $\ufw$ is discussed in \autoref{sec:nonadd}, where we provide explicit families of $2$-knots for which additivity fails by an arbitrarily large amount.

\begin{restatable}{theorem}{nonadd}
    \label{thm:nonadd}
    For any positive $c, n \in \mathbb{N}$,
    there exist $2$-knots $K_1, \dots, K_n$ with
    \begin{align*}
        & \ust(K_i) = \ufw(K_i)  = c, \\
        & c \leq \ust(K_1\cs\cdots\cs K_n) \leq 2c, \text{ and }  \\
        & c \leq \ufw(K_1\cs\cdots\cs K_n)  \le 2c.
    \end{align*}
\end{restatable}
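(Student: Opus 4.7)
The plan is to construct an explicit family of $n$ 2-knots that exhibits the desired non-additive behavior. The strategy is to concentrate the algebraic obstruction to unknotting in a single summand, while distributing the geometric complexity across the remaining $n-1$ summands whose ribbon structures combine with substantial cancellation. This way the lower bound on $\ust(\#_i K_i)$ comes from one carefully chosen knot and the upper bound comes from an efficient geometric unknotting of the whole connected sum.

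Concretely, fix a $2$-knot $K_*$ with $\ust(K_*) = \ufw(K_*) = \afw(K_*) = c$, obtainable for example from a ribbon $2$-knot built from a $1$-knot of classical unknotting number $c$ with nontrivial determinant. For $i \geq 2$, take $K_i = J_i$ to be ribbon $2$-knots with $\ust(J_i) = \ufw(J_i) = c$ and \emph{trivial} determinant, chosen from a family extending Miyazaki's $c = 1$ examples so that the fusion number of $\#_{i \geq 2} J_i$ does not grow additively with $n$ but remains bounded by $c$. Set $K_1 := K_*$ and $K_i := J_i$ for $i \geq 2$.

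For the upper bounds $\ust(\#_i K_i), \ufw(\#_i K_i) \leq 2c$, I would exhibit an explicit ribbon surface for $\#_i K_i$ whose fusion number is at most $2c$: $c$ bands from the ribbon surface of $K_*$ together with at most $c$ further bands from the combined Miyazaki-style structure of $\#_{i \geq 2} J_i$. Applying Miyazaki's inequality $\ust \leq \fus$ and \autoref{thm:fusion_upper_bound} ($\ufw \leq \fus$) then delivers both upper bounds on the connected sum simultaneously. For the lower bound $c \leq \ust(\#_i K_i)$, and likewise for $\ufw$, I would use the algebraic invariants from \autoref{sec:algebraic_lower_bounds}: because each $J_i$ has trivial determinant, the algebraic Casson-Whitney obstruction from $K_*$ survives in the connected sum, yielding $\ufw(\#_i K_i) \geq \afw(\#_i K_i) \geq c$, with a parallel argument for $\ust$.

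The main obstacle is constructing the family $\{J_i\}$ for general $c$: one needs ribbon $2$-knots that simultaneously realize $\ust(J_i) = \ufw(J_i) = c$, have trivial determinant (so as not to inflate $\afw$ of the connected sum), and admit ribbon presentations whose bands cancel upon connected summing so that the total fusion number stays bounded by $c$. Miyazaki's argument provides this for $c = 1$; the work is to iterate or modify his construction to handle arbitrary $c$, balancing the algebraic triviality of each $J_i$ against the geometric rigidity needed to force $\ust(J_i) = \ufw(J_i) = c$ from below.
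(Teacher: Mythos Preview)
Your proposal has a genuine gap: the entire argument hinges on the existence of the family $\{J_i\}_{i\ge 2}$, and you acknowledge this is unresolved. Concretely, you need ribbon $2$-knots $J_i$ with (i) $\ust(J_i)=\ufw(J_i)=c$ forced from below, (ii) trivial determinant, and (iii) ribbon presentations whose bands cancel so that $\fus(\#_{i\ge 2}J_i)\le c$ independently of $n$. Properties (i) and (ii) are already in tension, since the standard lower bounds for $\ust$ and $\ufw$ (Nakanishi index, determinant, $\afw$) all vanish under (ii); you give no alternative mechanism. Property (iii) is not what Miyazaki proved even for $c=1$: his result is that certain connected sums have $\ust=1$, not that their fusion number stays bounded. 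So the proposal is a strategy, not a proof.

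The paper's route is quite different and avoids these difficulties. It takes $K_i$ to be the connected sum of $c$ copies of the $p_i$-twist spin $\tau^{p_i}T(2,p_i)$, where $p_1,\dots,p_n$ are distinct primes. Then $\ust(K_i)=\ufw(K_i)=c$ because the Nakanishi index is $c$ (lower bound) and each summand is a twist spin of a $2$-bridge knot (upper bound via \autoref{thm:2bridgecw}). For the connected sum, the lower bound $c$ again comes from the Nakanishi index (or \autoref{max}). The upper bound $2c$ is obtained not via fusion number but via two ingredients you did not use: first, since each $K_i$ is itself a $p_i$-twist spin (of $\#^c T(2,p_i)$) and the $p_i$ are coprime, \autoref{thm:algnonadd} gives $\asta(K)=\afw(K)=c$; second, \autoref{algtogeolemma} converts this into the geometric bound $\upsilon(K)\le c+\alpha(K)=2c$. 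The key mechanism is the centrality of the $p_i$-th power of a meridian in a $p_i$-twist spin, which lets one combine $n$ separate abelianizing relations into one per ``slot''; this is what replaces your hoped-for band cancellation.
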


In the final section, we suggest some possible directions for further study.
Recently, the relationship between two similar invariants
$d_{\textrm{st}}$ and $\dsing$ was studied by
Singh \cite{singh2019distances}
(the invariant $\dsing$ already appeared as
$\mu_{\textup{sing}}$ in \cite{juhasz2018stabilization}).
His invariants record the minimal ``width'' of a sequence of stabilizations
and destabilizations of a regular homotopy,
meaning the maximum number of stabilizations or double points that occur \emph{simultaneously}.
The invariants we consider, on the other hand, record the minimal
``length'' of a sequence of stabilizations and destabilizations of a regular homotopy,
meaning the total number of stabilizations or double points that occur \emph{overall}.
Many of the geometric techniques used in our arguments are inspired
by those of Singh, as well as both Gabai \cite{dave:lightbulb}
and Schneiderman and Teichner \cite{schneiderman2019homotopy}.
A recent paper of Miller and Powell \cite{miller2019stabilization}
also studies the stabilization distance between arbitrary surfaces in $S^4$,
as well as the related relative setting of properly embedded surfaces in $B^4$. 

All manifolds and maps will be smooth unless stated otherwise.
All (ambient) manifolds will also be assumed to be connected,
and both manifolds and surfaces will be both compact and orientable.

%%%%%%%%%%%%%%%%%%%%%%%%%%%
\subsection*{Acknowledgements}
We greatly appreciate the support and advice given to us by
David Gay, Rob Kirby, Mark Powell, Arunima Ray, Rob Schneiderman, and Peter Teichner.
In addition, we would like to thank the entire topology community
at MPIM Bonn for providing such a welcoming and stimulating atmosphere
in which to collaborate and learn.
A special thank you to Rob, for lunch in Pisa,
and to Mark, Aru and Rob for their helpful edits.
We thank the anonymous reviewers for their careful reading
of our manuscript and their many 
insightful comments and suggestions.

% % % % % % % % % % % %
\section{Background}
\label{sec:background}
% % % % % % % % % % % %

For all definitions below, let $S$
be a smoothly immersed surface in $S^4$.
We use the shorthand
$\pi S \coloneqq \pi_1(S^4 - N(S), \ast)$
for the fundamental group of the complement
(of a neighborhood $N(S)$ of $S$),
where a basepoint $\ast$ is understood.
This section will mainly be spent analyzing the algebraic impact of the geometric operations we will be interested in.

% % % % % % % % % % % %
\subsection{The stabilization number}
% % % % % % % % % % % %

\begin{definition}
    \label{stab} 
    Suppose $\alpha$ is an arc with interior embedded in $S^4 -S$, whose endpoints lie on the surface $S$. The normal bundle of $\alpha$ in $S^4$ contains an embedded copy of $D^2 \times I$ intersecting $S$
    in exactly $D^2 \times \partial I$ such that the surface
    $(S - (D^2 \times \partial I)) \cup (\partial D^2 \times I)$\footnote{With corners smoothed, as in \autoref{fig:stabdiagrampdf}.}
    is orientable.
    This resulting surface is called the
    \bit{stabilization of $S$ along $\alpha$}.
    We call $\alpha$ the \bit{guiding arc for the stabilization}. 
\end{definition}

%%%%%%%%%% FIG %%%%%%%%%%
\fig{100}{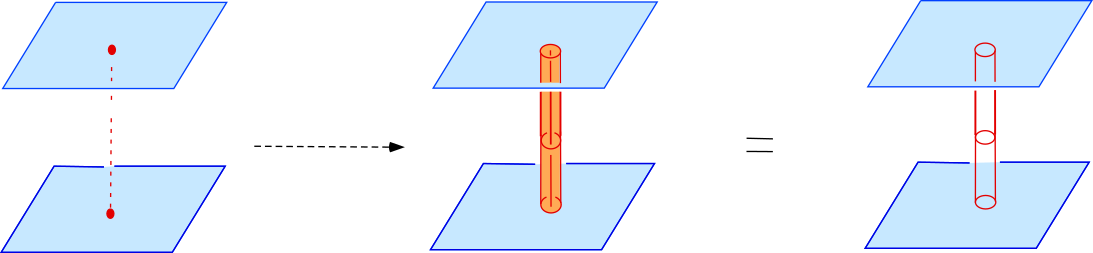}{
    \put(-420,6){$S$}
    \put(-381,47){$\alpha$}
    \put(-332,50){\small{Add $1$-handle}}
    \put(-135,50){$\partial$}
    \caption{The stabilization (right) of a surface $S$ along guiding arc $\alpha$.}
    \label{fig:stabdiagrampdf}
}
%%%%%%%%%%%%%%%%%%%%%%%%%%%

Observe that, as suggested by our terminology, the isotopy class of the stabilization depends only on the guiding arc $\alpha$ and not on the choice of sub-bundle $D^2 \times I$ (see \autoref{framings} for a similar discussion and Remark $5.3$ of \cite{dave:lightbulb} for more detail). Since guiding arcs with the same endpoints that are homotopic rel boundary are also isotopic rel boundary in dimension $4$, the stabilizations along these arcs are isotopic. 

\begin{definition}
    \label{defunknot} 
    A smoothly \bit{unknotted surface} in $S^4$ is a smoothly embedded surface
    of any genus that bounds a smoothly embedded solid handlebody.
\end{definition}

Indeed, since $S^4$ is simply-connected
the cores of the 1-handles of any pair of handlebodies of the same genus are isotopic.
This can be used to guide an isotopy to show that
there is a unique
unknotted surface of each genus.

Any closed orientable surface $K$ in the $4$-sphere is smoothly isotopic
to an unknotted surface after a finite number of stabilizations.
To see this, note that such a surface $K$ bounds
a smoothly embedded $3$-manifold $M \subset S^4$
called its \emph{Seifert solid}
which can be built as a handlebody from $K \times I$
by attaching $1$-handles to $K \times \{1\}$, followed by $2$ and $3$-handles.
Performing stabilizations to $K$ along the core arcs of the $1$-handles of $M$
gives a surface $K'$ that bounds the solid handlebody consisting of
the $2$ and $3$-handles of $M$, and so by definition is unknotted. 

\begin{definition}
    The \bit{stabilization number} $\ust(K)$ of a $2$-knot $K$
    is the minimal number of $1$-handle stabilizations needed to obtain an unknotted surface.
\end{definition}

% % % % % % % % % % % %
\subsection{The Casson-Whitney number}
% % % % % % % % % % % %

By Smale \cite[Theorem D]{smale} and Hirsch \cite[Theorem 8.3]{hirsch1959immersions}, 
embedded surfaces in a orientable $4$-manifold are homotopic
if and only if they are regularly homotopic,
i.e.\ homotopic through immersions.
Generically, there are only finitely many times during a regular homotopy
at which the immersed sphere is not self-transverse -- at these times,
double points of opposite sign are either introduced or cancelled. 

\begin{definition}\label{fmwm}
    The local model for the regular homotopy removing pairs
    of double points is called a \bit{Whitney move};
    this homotopy is supported in a regular neighborhood
    of a \bit{Whitney disk} $W$. 
    The inverse to this homotopy is called a \bit{finger move},
    which is supported in a regular neighborhood of an arc $\alpha$ whose endpoints lie on the surface, and whose interior is embedded in the complement.
    We call the arc $\alpha$ a \bit{guiding arc for the finger move}
    (the analog in this context of the guiding arc from \autoref{stab}).
    These two homotopies are depicted in \autoref{fwpdf}.
    Also labelled in the figure are the \bit{Whitney arcs} $\omega_1$ and $\omega_2$,
    whose union is the boundary of the Whitney disk $W$. Each Whitney arc connects a pair of double points along a sheet of the immersed surface. 
\end{definition} 

%%%%%%%%%% FIG %%%%%%%%%%
\fig{200}{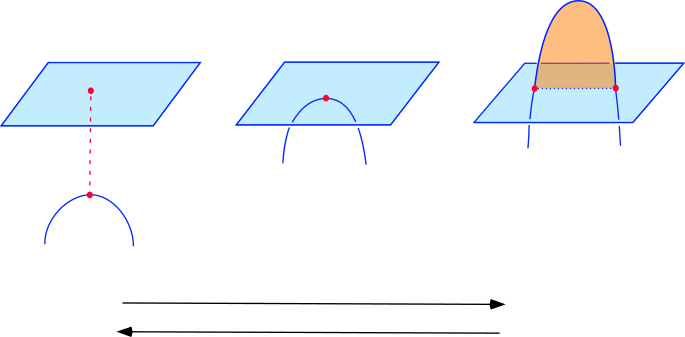}{
    \put(-70,173){$W$}
    \put(-348,105){$\alpha$}
    \put(-71,137){\small{$\omega_2$}}
    \put(-43,184){\small{$\omega_1$}}
    \put(-235,25){\small{Finger move}}
    \put(-235,-8){\small{Whitney move}}
    \caption{The local model of a finger move along the guiding arc $\alpha$, and Whitney move along the Whitney disk $W$ with boundary the union of the Whitney arcs $\omega_1$ and $\omega_2$.}
    \label{fwpdf}
}
%%%%%%%%%%%%%%%%%%%%%%%%%%%

\begin{remark} \label{framings}
To explicitly define a Whitney move in local coordinates requires that the normal disk bundle of the Whitney disk be framed ``compatibly'' with respect to its boundary on the immersed sphere; refer to \cite{freedman-quinn:4-manifolds} as well as Casson's lectures in \cite{casson1986three} for more details. Likewise, a framing of the normal $B^3$-bundle of the guiding arc compatible with the surface at its endpoints is needed to explicitly define a finger move. The choice of framing for the guiding arc will be suppressed, however, since (as with stabilizations) the resulting immersed surface up to ambient isotopy is independent of this choice of framing and depends only on the homotopy class of the arc itself, rel boundary
(see the discussion in \cite[Remark 5.3]{dave:lightbulb} for instance). 
\end{remark}

From now on, we shall always consider generic regular homotopies, in the sense that they are compositions of finger and Whitney moves as in \autoref{fmwm}. In fact, since the guiding arcs of the finger moves can be isotoped away from the Whitney disks in the ambient $4$-manifold, a deformation of the homotopy (without increasing the number of finger and Whitney moves) arranges for all of the finger moves to occur first, and simultaneously,
followed by all of the Whitney moves. A more detailed discussion of this folklore fact can be found in \cite[Section 4.1]{quinn1986isotopy}.
We will also always assume that our regular homotopies are of this form.

\begin{definition}
    The \bit{length} of a regular homotopy between surfaces
    is its total number of finger moves,
    or equivalently, Whitney moves.
    The \bit{Casson-Whitney number} $\ufw(K)$ of a $2$-knot $K$
    is the minimal length of any regular homotopy from $K$ to the unknot.  
\end{definition}

In general, finger moves (like stabilizations) depend on the choice of guiding arc up to homotopy rel boundary.  Namely, if two guiding arcs are homotopic and hence isotopic rel endpoints, then performing finger moves along these arcs results in immersions that are ambiently isotopic in $S^4$. In particular, it is critical to many of our arguments that all guiding arcs -- and hence finger moves -- are isotopic in the complement of the unknot $U$. 

\begin{definition} \label{stddef}
    We call the result of performing $n$ finger moves on the unknot $U$
    the \bit{standard immersed sphere} with $2n$ double points.
    Often, we reserve the use of $\Sigma$ to denote this immersion. 
\end{definition}

We later observe that there is indeed a unique standard immersed sphere for each $n$, up to ambient isotopy of $S^4$. 

\begin{definition}\label{disksdef}
After the finger moves and before the Whitney moves, any regular homotopy from a $2$-knot $K$ to the unknot $U$ restricts to the standard immersion $\Sigma$. 
\begin{equation*}
    \begin{tikzcd}[column sep=3cm]
        \text{2-knot } K
        \arrow[r, "\text{finger moves}", squiggly, shift left]
        &
        \text{standard immersion $\Sigma$}
        \arrow[l, "\text{Whitney moves}", squiggly, shift left]
        \arrow[r, "\text{Whitney moves}", squiggly, shift left]
        &
        \text{unknot } U
        \arrow[l, "\text{finger moves}", squiggly, shift left]
    \end{tikzcd}
\end{equation*}

%%%%%%%%%% FIG %%%%%%%%%%
\begin{figure}[!htbp]
    \centering
    \includegraphics[width=\linewidth]{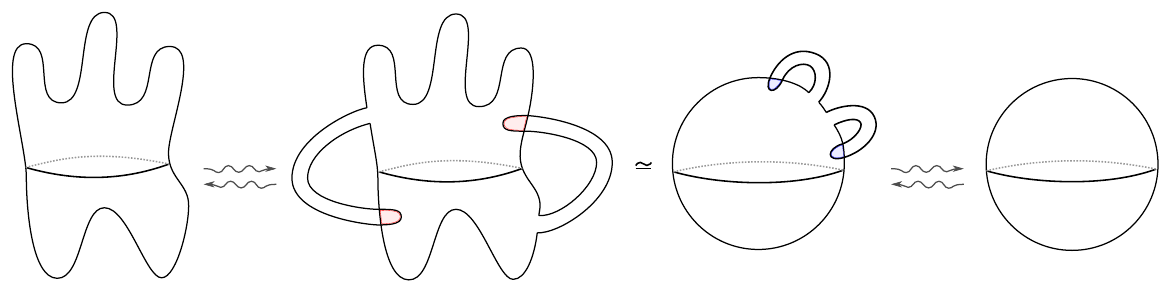}
    \put(-430,-5){$K$}
    \put(-43,-5){$U$}
     \put(-285,-5){$\Sigma$}
    \put(-166,-5){$\Sigma$}
    \put(-380,54){FM}
    \put(-380,30){WM}
    \put(-109,54){WM}
    \put(-109,30){FM}
    \caption{
        Decomposing a regular homotopy from a 2-knot $K$ to the unknot $U$. 
        The standard immersed sphere $\Sigma$ obtained
        after the finger moves (FM) and before Whitney moves (WM)
        on $K$ is drawn from two different perspectives
        (middle left and middle right) 
        to show the knotted and standard Whitney disks (red and blue respectively).
    }
    \label{fig:regular_homotopy_cartoon}
\end{figure}
%%%%%%%%%%%%%%%%%%%%%%%%%

Therefore, a regular homotopy 
from a 2-knot $K$ to the unknot $U$
is given by two collections
of Whitney disks that pair the double points
of the standard immersion: a set of \bit{standard}
Whitney disks leading to the unknot $U$,
and a set of \bit{knotted} Whitney disks leading to the knot $K$, as illustrated in \autoref{fig:regular_homotopy_cartoon}.
\end{definition}

% % % % % % % % % % % % % % % %
\subsection{Fundamental group calculations}  \label{subsec:groupcalcs}
% % % % % % % % % % % % % % % %

Below, we describe the effects of finger moves and stabilizations
on $\pi_1$ of the complement of a (possibly immersed) surface
$S = S_1 \cup \cdots \cup S_n$.
In particular, each move introduces one relation to $\pi_1$
as stated in the results below.
For detailed proofs, we refer to the original sources
\cite{casson1986three}, \cite{boyle1988classifying},
\cite{kirby2006topology}. 

%%%%%%%%%%%%%%%%%%%%%%%%%
% pi_1 for 1-handles
%%%%%%%%%%%%%%%%%%%%%%%%%

Begin by picking a basepoint $\ast$ in the complement of the link $S$
and a basepoint $\ast_i \in S_i$ on each component of $S$.
For each $i$, fix an arc $\rho_i$ with interior in $S^4 - S$
connecting the basepoint $\ast$ to the basepoint $\ast_i$.

%%%%%%%%%% FIG %%%%%%%%%%
\fig{150}{pictures/homotopic_stabilizing_tubes_schematic_version_3}{
\put(-90,60){$\alpha$}
\put(-199,100){$m_{1}$}
\put(-24,100){$m_{2}$}
\put(-135,-6){$\ast$}
\put(-190,55){$\ast_1$}
\put(-25,55){$\ast_2$}
\put(-140,30){$\rho_1$}
\put(-100,30){$\rho_2$}
\put(-190,-5){$S_1$}
\put(-25,-5){$S_2$}
\caption{
    A choice of pushoff in gray, giving an element of $\pi S$ corresponding to the guiding arc $\alpha$.
    Also pictured are unbased meridians $m_i$
    for the components $S_i$.
} 
\label{fig:homotopic_stabilizing_tubes_schematic}}
%%%%%%%%%%%%%%%%%%%%%%%%%%%

\begin{definition}
    \label{meridian}
    A \bit{meridian} of $S$, and more specifically of the component $S_i$,
    is an element of $\pi S$ that can be represented by a simple closed curve
    $\gamma \colon S^1\hookrightarrow S^4 - S$ bounding a disk in $S^4$
    that transversely intersects $S_i$ in a single point.
\end{definition}

An orientation of $S$ and the ambient space
induces a positive orientation on the meridian.
The set of positively oriented
meridians of a connected component
of a knotted surface forms a conjugacy class of the
fundamental group of its complement.
That is, $x$ is a meridian of $S_i$ if and only if
$x^w \coloneqq w^{-1}xw$ is as well,
for any $w\in\pi S$. 
If $S$ is connected,
this element $w$ may be chosen to lie in the commutator subgroup
$(\pi S)^\prime$,
a fact which we exploit in \autoref{sec:algebraic_lower_bounds}.

\begin{definition}
    \label{defguidingarc}
    Let $\alpha$ be an arc with interior embedded away from $S$,
    connecting $\ast_i$ to $\ast_j$ for (possibly equal) indices $i,j$.
    We call this a \bit{guiding arc for $S$}, as we did without specifying base points in Definitions \ref{stab} and \ref{fmwm}.
    Each push-off of the loop $\rho_i \alpha \rho_j^{-1}$ into the complement
    of the surface $S$ gives an element $g \in \pi S$
    that is said to \bit{correspond} to the arc $\alpha$,
    see \autoref{fig:homotopic_stabilizing_tubes_schematic}.
    Note that the element $g$ is well-defined (i.e.\ independent of the push-off)
    up to left multiplication by meridians of $S_i$
    and right multiplication by meridians of $S_j$. 
\end{definition}

From now on, we will always assume that the guiding arcs used for both stabilizations and finger moves are of this form, i.e. connecting a basepoint $\ast_i \in S_i$ to a basepoint $\ast_j \in S_j$ for some (possibly equal) indices $i, j$. We often refer to arcs corresponding to the identity element, as well as stabilizations and finger moves done along such a guiding arc, as ``trivial''. 

\begin{remark}
    \label{rem:boundarytwist}
    Let $\alpha$ and $\beta$ be guiding arcs for $S$ with the same endpoints
    $\ast_i$ on $S_i$ and $\ast_j$ on $S_j$.
    Suppose that $\alpha$ corresponds to an element $g \in \pi S$
    and $\beta$ corresponds to $m_1^{n_1} g m_2^{n_2}$
    for some $n_1, n_2 \in \mathbb{Z}$ and meridians $m_i, m_j$ to $S_i, S_j$ respectively.
    Then, the guiding arcs $\alpha$ and $\beta$ are isotopic in the complement of $S$
    rel boundary via a sequence of ``boundary twists''
    as pictured in \autoref{fig:boundary_twist}.
    It follows that the surfaces obtained by either stabilizing
    or performing finger moves along these arcs are ambiently isotopic.
\end{remark}

%%%%%%%%%% FIG %%%%%%%%%%
\fig{150}{pictures/boundary_twist}{
    \put(-260,80){$S_i$}
    \put(-170,80){$S_i$}
    \put(-80,80){$S_i$}
    \put(-130,120){\textcolor{red}{$\alpha$}}
    \put(-260,7){$S_i$}
    \put(-170,7){$S_i$}
    \put(-80,7){$S_i$}
    \put(-130,40){\textcolor{red}{$\beta$}}
    \caption{
        The guiding arcs from \autoref{rem:boundarytwist},
        before (top) and after (bottom) a boundary twist.
    }
    \label{fig:boundary_twist}
}
%%%%%%%%%%%%%%%%%%%%%%%%%%%

In particular, it follows from \autoref{rem:boundarytwist}
that all guiding arcs for the unknot $U$ are isotopic,
since $\pi U \cong \mathbb{Z}$.
So for any $n>0$, there is a unique surface (up to isotopy) resulting
from $n$ stabilizations of $U$ -- namely,
the genus $n$ unknotted surface, as in \autoref{defunknot}.
Similarly, the immersed sphere
resulting from $n$ finger moves on $U$ is ambiently isotopic
to the standard immersion with $2n$ double points, as in \autoref{stddef}.

\begin{lemma}[Stabilization relation]
    \label{lem:pi_1_handle}
    Let $\alpha_1, \dots, \alpha_k$ be disjointly embedded guiding arcs
    along which stabilizing $S$ gives the surface $S'$.
    Then
    \begin{equation*}
        \pi (S') \cong 
        \bigslant{ \pi S }{ \langle \langle g_i^{-1} a_i g_i b_i^{-1}  \rangle \rangle }
    \end{equation*}
    where $a_i,b_i$ are meridians to the components of $S$ containing the endpoints of $\alpha_i$ (as in \autoref{meridian}), and the element $g_i$ corresponds to $\alpha_i$ (as in \autoref{defguidingarc}). 
\end{lemma}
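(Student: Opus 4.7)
The plan is to prove the $k=1$ case by a van Kampen argument on a decomposition adapted to a $4$-ball neighborhood of the guiding arc, and then to obtain arbitrary $k$ by induction, exploiting that disjointly embedded guiding arcs admit pairwise disjoint such neighborhoods (so each stabilization can be performed without disturbing the remaining arcs, meridians, or representatives of the $g_j$'s).

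For the single-arc step, let $\alpha$ run from $\ast_i \in S_i$ to $\ast_j \in S_j$ and choose a closed $4$-ball $B \subset S^4$ that is a regular neighborhood of $\alpha$, shrunk so that $B \cap S$ consists of two unknotted disks $D_i, D_j$ with boundaries on $\partial B$. After stabilization, $B \cap S'$ is a standard annulus $A$ with $\partial A = \partial D_i \cup \partial D_j$ and core a push-off of $\alpha$. Setting $E_0 \coloneqq (S^4 \setminus \mathrm{int}(B)) \setminus \nu(S \setminus (D_i \sqcup D_j))$, one has decompositions
\[
    E \coloneqq S^4 \setminus \nu(S) = E_0 \cup \bigl( B \setminus \nu(D_i \sqcup D_j) \bigr),
    \qquad
    E' \coloneqq S^4 \setminus \nu(S') = E_0 \cup \bigl( B \setminus \nu(A) \bigr),
\]
glued along the common overlap $\partial B \setminus \nu(\partial D_i \sqcup \partial D_j)$, whose fundamental group is free on meridians $a, b$ of $D_i, D_j$. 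Van Kampen applied to the first decomposition is trivial, since $\pi_1(B \setminus \nu(D_i \sqcup D_j))$ is itself free on $a, b$ via the identity inclusion, so the amalgamation recovers $\pi S$. For the second, $\pi_1(B \setminus \nu(A)) \cong \Z$ is generated by a meridian $m$ of the tube, and the overlap inclusion sends both $a$ and $b$ to meridians of $A$, related by conjugation by a loop that traverses the tube from the $D_i$-side to the $D_j$-side; eliminating the generator $m$ from the resulting amalgamated product leaves a single new relator, and the task is to recognize it as $g^{-1} a g b^{-1}$.

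The main obstacle is thus to identify this conjugating loop with the class associated to $\alpha$ in \autoref{defguidingarc}. I would parametrize $A$ as $S^1 \times I$ with core a push-off of $\alpha$, take a path in $B \setminus \nu(A)$ from $\ast_i$ to $\ast_j$ running along one side of the tube, and close it up via $\rho_i$ and $\rho_j^{-1}$; the subtle point is to check that the two sides of the tube, and the various choices of push-off, differ only by pre- or post-multiplication by meridians of $S_i$ and $S_j$, which is precisely the ambiguity permitted in \autoref{defguidingarc} and is witnessed geometrically by the boundary twists of \autoref{rem:boundarytwist}. This pins down the new relator as $g^{-1} a g b^{-1}$ and finishes the $k=1$ case; the general case then follows by induction on the arcs, each stabilization adjoining its own relator without altering the loops representing the other meridians and guiding-arc elements.
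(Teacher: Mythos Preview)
Your van Kampen argument is correct and is the standard approach. Note, however, that the paper does not actually prove \autoref{lem:pi_1_handle}: it states the result with the schematic \autoref{fig:pi_1_handle_stabilization} and refers to \cite{casson1986three}, \cite{boyle1988classifying}, \cite{kirby2006topology} for details. Your decomposition into a $4$-ball around the arc and its exterior, with amalgamation over the free group on the two boundary meridians, is essentially what appears in those sources. One minor point of phrasing: when you say the images of $a$ and $b$ in $\pi_1(B\setminus\nu(A))\cong\Z$ are ``related by conjugation by a loop traversing the tube'', the conjugation is of course trivial in $\Z$; what you mean (and correctly carry out in the next paragraph) is that after amalgamating with $\pi_1(E_0)$ and tracking basepoints via $\rho_i,\rho_j$, the identification $a\mapsto m\leftarrow b$ becomes the relation $g^{-1}ag=b$ in $\pi S$. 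For comparison, the paper \emph{does} prove the closely related Tubing relation (\autoref{lem:pi_1_associated_tubed}) by a dual method: rather than a ball decomposition, one removes a normal disk to the tube and observes that the complement of the tubed surface is obtained from the original complement by attaching a $2$-handle along the boundary of that disk. That argument adapts here as well (remove a cocore disk of the stabilizing $1$-handle) and yields the same relator with somewhat less basepoint bookkeeping.
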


%%%%%%%%%% Double figure %%%%%%%%%%
\begin{figure}[ht]
    \begin{subfigure}{.495\textwidth}
      \centering
      % include first image
      \includegraphics[width=.95\linewidth]{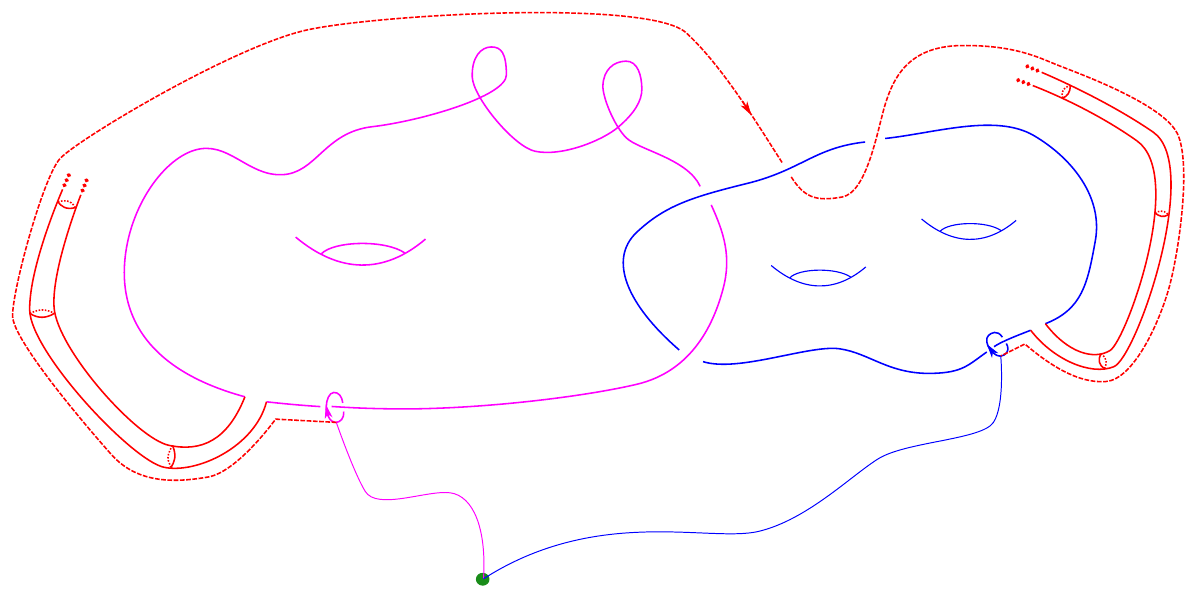}  
      \caption{Stabilization}
      \label{fig:pi_1_handle_stabilization}
    \end{subfigure}
    %%%%%%%%%%%%%
    \begin{subfigure}{.495\textwidth}
      \centering
      % include second image
      \begin{overpic}[width=0.95\textwidth]{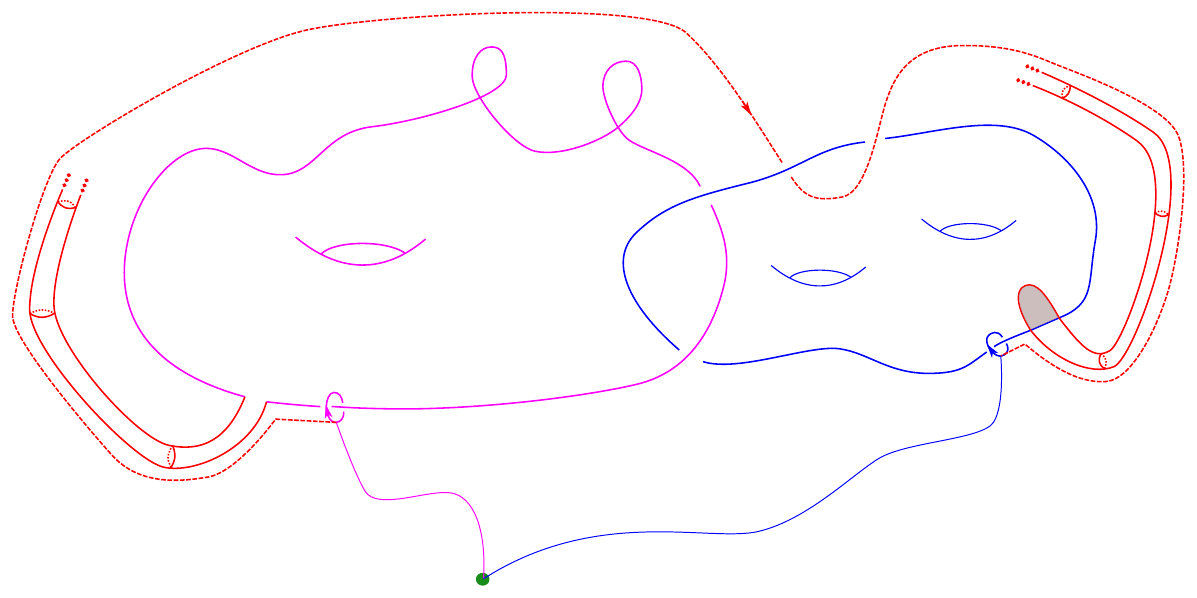}
      \end{overpic}

      \caption{Finger move \label{fig:pi_1_finger_move}} 
    \end{subfigure}
\caption{
    Illustrating \autoref{lem:pi_1_handle}
    and \autoref{lem:pi_1_finger_move}, with the
    meridian $a_{i}$ in pink and $b_{i}$ in blue.
    }
\label{fig:fig}
\end{figure}
%%%%%%%%%%%%%%%%%%%%%%%%%%%

Refer to \autoref{fig:pi_1_handle_stabilization}
for a schematic of the set-up in \autoref{lem:pi_1_handle}.
Note that each $g_i^{-1} a_i g_i$ is also a meridian; hence the relation introduced by stabilizing can also be thought of as one which simply identifies two meridians. We make the following definitions for $n$-knots, because we will think about them in reference to 1-knots as well as 2-knots.

\begin{definition} \label{def:algstab} 
    Let $K$ be an $n$-knot.
    The minimal number of relations of the form $x=y$,
    where $x,y$ are meridians of $K$, which abelianize the knot group
    is called the \bit{algebraic stabilization number} $\asta(K)$ of $K$.
\end{definition}

%%%%%%%%%%%%%%%%%%%%%%%%%
% pi_1 for finger moves
%%%%%%%%%%%%%%%%%%%%%%%%%

\begin{lemma}[Finger move relation]
    \label{lem:pi_1_finger_move}
    Suppose that $S'$ is the result of performing finger moves on $S$ along disjointly embedded guiding arcs $\alpha_1, \dots, \alpha_k$. Then, 
    \begin{equation*}
        \pi(S') \cong
        \bigslant{ \pi S }{ \langle \langle [a_i,g_i^{-1} b_i g_i] \rangle \rangle }
    \end{equation*}
   where $a_i,b_i$ are meridians to the components of $S$ containing the endpoints of $\alpha_i$ (as in \autoref{meridian}), and the element $g_i$ corresponds to $\alpha_i$ (as in \autoref{defguidingarc}).
\end{lemma}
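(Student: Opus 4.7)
The plan is to induct on $k$, reducing the general statement to the case of a single finger move, and then to pin down the new relation via a local van Kampen computation in a small $4$-ball around the guiding arc.

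Since the $\alpha_i$ are disjointly embedded, $S'$ can be obtained from $S$ by performing the $k$ finger moves one at a time through a sequence of intermediate immersed surfaces, each move supported in its own small ball. Iterating the single-finger statement then produces the normal closure of $k$ commutators, which is exactly the asserted quotient. So fix $k=1$, a single guiding arc $\alpha$ with meridians $a,b$, and corresponding element $g \in \pi S$. Choose a $4$-ball $B \subset S^4$ that is a regular neighborhood of $\alpha$, meeting $S$ transversely in two small flat disks $D_i \subset S_i$ and $D_j \subset S_j$, and arranged so that $S$ and $S'$ coincide outside $B$ while $B \cap S'$ is the standard model of two disks meeting transversely in two points. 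I would then apply van Kampen's theorem to the decomposition
\[
S^4 \setminus N(S') \;=\; \bigl((S^4 \setminus N(S)) \setminus \operatorname{int} B\bigr) \;\cup\; \bigl(B \setminus N(S')\bigr),
\]
whose overlap $\partial B \setminus N(S)$ has $\pi_1$ free on the two boundary meridians $a,b$. The ``outside'' piece carries $\pi_1 = \pi S$ once the global basepoint is brought to $\partial B$ along the arc $\rho_i$.

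The geometric heart of the argument, which I also expect to be the main obstacle, is the local computation $\pi_1(B \setminus N(S')) \cong \langle a, b \mid [a,b] \rangle$. I would verify this by exhibiting an embedded Whitney disk $W \subset B$ disjoint from $N(S')$ that pairs the two new double points: such a $W$ is guaranteed because the finger move is tautologically undone by a Whitney move inside its own support, and a small pushoff of $\partial W$ into the complement represents the commutator of the two meridians. Combined with an explicit handle decomposition of the local finger model (one $0$-handle, two $1$-handles for $a$ and $b$, and a single $2$-handle along $[a,b]$), this confirms that $[a,b]=1$ is the \emph{only} new relation. Van Kampen then amalgamates $\pi S$ with $\langle a, b \mid [a,b]\rangle$ over $F(a,b)$ and introduces exactly one new relation into $\pi S'$.

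Finally, to transport this local relation, based on $\partial B$, back to the global basepoint $\ast$, observe that the meridian $b$ is naturally based near $D_j$; to compare it with $a$ at a common point of $\partial B$ it must be carried along (a pushoff of) $\alpha$ to the $D_i$-side, which conjugates $b$ by $g$ in the notation of \autoref{defguidingarc}. Hence $[a,b]=1$ becomes $[a, g^{-1} b g]=1$ in $\pi S$. Any ambiguity in the pushoff alters the relation only by further conjugation by meridians, which does not change its normal closure (cf.\ \autoref{rem:boundarytwist}), and the lemma follows.
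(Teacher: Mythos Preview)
Your argument is correct, and the local van Kampen computation you outline goes through. One small imprecision: the Whitney disk $W$ is not disjoint from $S'$ (its boundary lies on $S'$); what you need is a pushoff of $\partial W$ into the complement, or equivalently the observation that the linking circle of $W$ bounds in $B\setminus N(S')$, which is exactly the Clifford-torus picture in the paper's \autoref{fig:fmrelation}. Also, the assertion that the outside piece has $\pi_1=\pi S$ deserves one sentence: it follows from van Kampen applied to the \emph{original} decomposition $S^4\setminus N(S)=(\text{outside})\cup(B\setminus N(S))$, since $B\setminus N(S)$ is the complement of two trivial disks in $B^4$ and the inclusion of the overlap $\partial B\setminus N(S)$ is a $\pi_1$-isomorphism onto it.

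The paper itself does not prove this lemma; it refers to Casson's original argument and illustrates the relation via the Clifford torus. The Casson-style approach can be inferred from the paper's proof of the very similar \autoref{lem:pi_1_associated_tubed} (tubing relation), which the authors say is modeled on Casson's finger-move proof. That argument is organized differently from yours: rather than excising a ball and computing a local model, one identifies a single embedded $2$-disk (there the normal disk $D_i$; here the Whitney disk $W$) in the complement of $S'$ whose removal yields, up to homotopy equivalence, the complement of $S$. Dually, $S^4\setminus N(S')$ is obtained from $S^4\setminus N(S)$ by attaching a single $2$-handle, and one reads off its attaching circle directly as the commutator on the Clifford torus. Your local-ball decomposition is a perfectly good alternative and is arguably more transparent about why exactly one relation appears; Casson's global ``remove a disk, reattach as a $2$-handle'' argument is slicker in that it avoids computing the local model $\pi_1(B\setminus N(S'))$ altogether and handles all $k$ arcs at once without an explicit induction.
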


\autoref{fig:pi_1_finger_move} gives a schematic
of the set-up in \autoref{lem:pi_1_finger_move}.
Note that while the stabilization relation identifies two meridians,
a finger move relation can only make them commute.
This discrepancy leads to our result in \autoref{sec:algebraic_lower_bounds}
that the stabilization and Casson-Whitney numbers are not equal in general.

\begin{definition}
    \label{def:algcw}
    Let $K$ be an $n$-knot.
    The minimal number of relations of the form $xy=yx$
    which abelianize the knot group,
    where $x, y$ are meridians of $K$,
    is called the \bit{algebraic Casson-Whitney number} $\afw(K)$ of $K$.
\end{definition}

This minimum gives an algebraic lower bound for $\ufw(K)$,
since a regular homotopy from a 2-knot $K$ to the unknot
starts with a sequence of finger moves on $K$ to the
standard immersion $\Sigma$ with $\pi_1(S^4- \Sigma) \cong \mathbb{Z}$;
thus the corresponding finger move relations abelianize $\pi  K$.

%%%%%%%%%% Double figure %%%%%%%%%%
\begin{figure}[ht]
    \begin{subfigure}{.45\textwidth}
      \centering
      
      \begin{overpic}[width=0.95\textwidth]{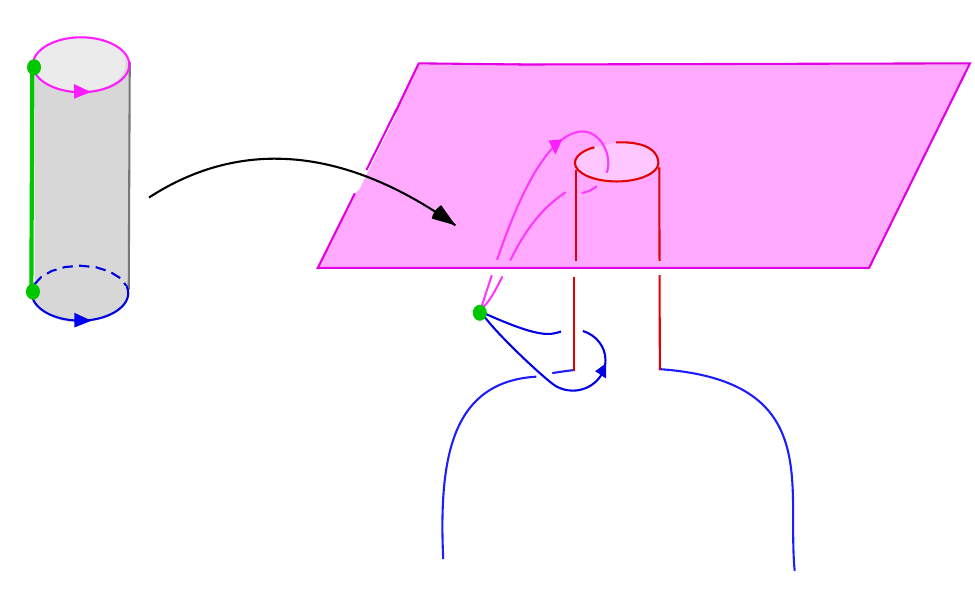}
    % \put(-365,43){\small{$b$}}
    % \put(-383,87){\small{$a$}}
    % \put(-136,85){\small{$a$}}
    % \put(-66,89){\small{$b$}}
    % \put(-131,96){\small{$a^{-1}$}}
    % \put(-78,56){\small{$b^{-1}$}}
      \end{overpic}
      \caption{Stabilization relation \label{fig:stabrelation}} 
    \end{subfigure}
      % include first image
    %%%%%%%%%%%%%
    \begin{subfigure}{.45\textwidth}
      \centering
      % include second image
      \begin{overpic}[width=0.95\textwidth]{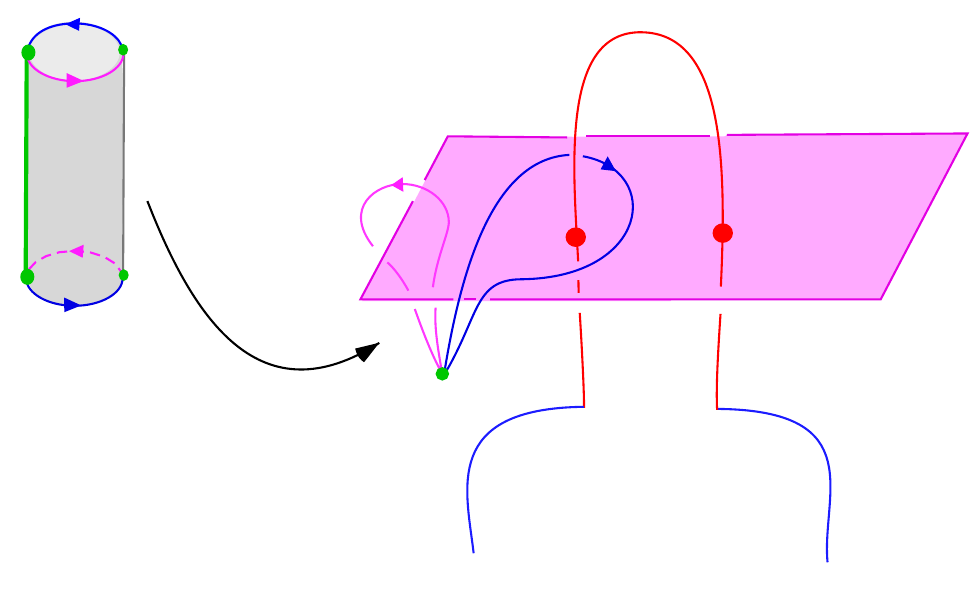}
      \end{overpic}
      \caption{Finger move relation  \label{fig:fmrelation}} 
    \end{subfigure}
\caption{
    In both figures (A) and (B), oriented meridians $x$ and $y$ to the surface are drawn in pink and blue, and the basepoint in green. The gray annuli (immersed in the complement of the surface) are null-homotopies giving the algebraic relations from \autoref{lem:pi_1_handle} and \autoref{lem:pi_1_finger_move}. 
    On the right, the image of the grey annulus is exactly the Clifford torus around the double point, illustrating that the commutator relation $[x, y] = 1$ holds.}
\label{fig:1-handle_alg}
\end{figure}

We summarize the results of this section in the following proposition.
To our knowledge, these are the sharpest algebraic lower bounds for the unknotting numbers.

\begin{proposition}
    \label{basic}
    For any 2-knot $K$,
    $\asta(K)\leq\ust(K)$ and $\afw(K)\leq\ufw(K)$.
\end{proposition}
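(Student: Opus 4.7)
The plan is to apply the two $\pi_{1}$-computations of \autoref{subsec:groupcalcs} directly. The only additional input needed is the observation, already present in the preceding discussion, that both the complement of the unknotted genus-$n$ surface and the complement of the standard immersion $\Sigma$ have fundamental group $\cong \mathbb{Z}$, hence abelian; the rest is bookkeeping with meridians.

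For $\asta(K) \le \ust(K)$, let $n = \ust(K)$. I would first realize an optimal sequence of stabilizations simultaneously along $n$ disjoint guiding arcs $\alpha_{1}, \ldots, \alpha_{n}$ based at points of $K$, so that a single application of \autoref{lem:pi_1_handle} suffices. Each arc contributes a relation $g_{i}^{-1} a_{i} g_{i} = b_{i}$; since a conjugate of a meridian is again a meridian, this has the required form $x = y$ with $x, y$ meridians of $K$. The resulting quotient is the fundamental group of the complement of the unknotted genus-$n$ surface, namely $\mathbb{Z}$, which is abelian, so by definition these $n$ relations witness $\asta(K) \le n$.

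For $\afw(K) \le \ufw(K)$, let $n = \ufw(K)$ and fix an optimal regular homotopy. By the normal form in \autoref{disksdef}, this homotopy factors as $K \rightsquigarrow \Sigma \rightsquigarrow U$, where $\Sigma$ is obtained from $K$ by $n$ simultaneous finger moves along disjoint guiding arcs. Applying \autoref{lem:pi_1_finger_move} to that collection, each finger move imposes a commutator relation $[a_{i},\, g_{i}^{-1} b_{i} g_{i}] = 1$ between two meridians of $K$. On the other hand $\Sigma$ is also obtained from $U$ by $n$ finger moves, and since $\pi U \cong \mathbb{Z}$ is already abelian the analogous relations are trivial; consequently $\pi \Sigma \cong \mathbb{Z}$. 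Hence the $n$ commutator-of-meridians relations coming from the finger moves on $K$ abelianize $\pi K$, yielding $\afw(K) \le n$.

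There is no substantive obstacle. The only fussy point is ensuring that at each stage the meridians $a_{i}, b_{i}$ appearing in the lemmas can be taken to be meridians of $K$ itself rather than of some intermediate surface; this is precisely why I prefer to arrange both the stabilizations and the finger moves into a single simultaneous collection along disjoint guiding arcs, so the $\pi_{1}$-lemmas speak directly about $\pi K$.
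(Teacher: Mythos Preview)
Your proposal is correct and is essentially the argument the paper itself gives: the proposition is stated as a summary of the preceding discussion, where the inequality $\afw(K)\le\ufw(K)$ is justified (just after \autoref{def:algcw}) exactly as you do, via \autoref{lem:pi_1_finger_move} and $\pi\Sigma\cong\mathbb{Z}$, and the inequality $\asta(K)\le\ust(K)$ follows analogously from \autoref{lem:pi_1_handle}. Your care in arranging the stabilizations/finger moves simultaneously along disjoint arcs so that the relations live in $\pi K$ is the right way to make the argument precise.
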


% % % % % % % % % % %
% Table listing various invariants
% % % % % % % % % % %
\autoref{tab:invariants} gives a glossary of the main invariants
that will be referred to. 
\begin{center}
\begin{table}[H]
    \begin{tabularx}{1.05\textwidth}{ |c|l|X| }
    \hline
    $\pi K$ &
    knot group/surface group &
        fundamental group of knot complement $S^4 - K$ \\
    $\mu(K)$ & meridional rank of $K$ & 
        minimal number of meridians which generate $\pi K$ \\ 
    $m(K)$ & Nakanishi index &
        minimal size of generating set of Alexander module of $K$ \\ 
    $a(K)$ & Ma-Qiu index &
        minimal size of normal generating set of commutator subgroup $(\pi K)'$ \\
    $\ust(K)$ & stabilization number &
        minimal number of 1-handle stabilizations needed to obtain
        an unknotted surface from $K$ \\
    $\ufw(K)$ & Casson-Whitney number &
        minimal number of Whitney moves in a regular homotopy from $K$ to the unknot \\
    $\asta(K)$ & algebr.\ stabilization number & 
        minimal number of 1-handle stabilizations on $K$ needed to obtain
        a surface with group $\mathbb{Z}$ \\
    $\afw(K)$ & algebr.\ Casson-Whitney number &
        minimal number of finger moves on $K$ needed to obtain an
        immersed 2-knot with group $\mathbb{Z}$ \\
    $\fus(K)$ & fusion number of a ribbon knot &
        minimal number of fusion tubes in a ribbon presentation for $K$ \\
    \hline
    \end{tabularx}
    \caption{Overview of main invariants for a 2-knot $K$.}
    \label{tab:invariants}
\end{table}
\end{center}

% % % % % % % % % % % % % % % %
\section{Relating the stabilization and Casson-Whitney numbers} \label{sec:inequality} 
% % % % % % % % % % % % % % % %

Fix a $2$-knot $K \subset S^4$
and let $U \subset S^4$ denote the unknotted $2$-sphere. We begin by introducing some terminology needed only in this section.

\begin{definition}\label{tubedsurface} 
    Given an immersed surface $\Sigma$ in $S^4$ with algebraically zero double points,
    and any choice of disjointly embedded arcs on $\Sigma$ pairing double points of opposite sign,
    there is an \bit{associated tubed surface} obtained by tubing the double points
    along these arcs as in \autoref{fig:surface_tubing_definition}.
    Note that this surface is oriented, since the endpoints of each arc are double points of opposite orientation, and that a priori the smooth (and even topological)
    isotopy class of the resulting surface depends on the arcs along which the tubing is done. 
\end{definition}

%%%%%%%%%% START Surface tubing definition double figure %%%%%%%%%%
\begin{figure}[ht]
    \begin{subfigure}{.495\textwidth}
      \centering
      % include first image
      \begin{overpic}[width=0.95\textwidth]{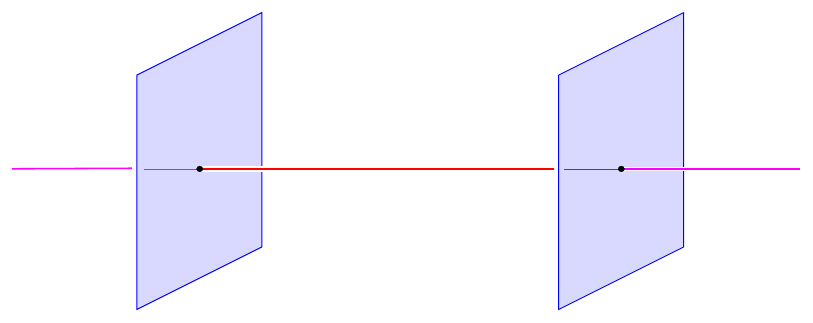}
        \put(50,20){\color{red} $\alpha$}
        \label{fig:surface_tubing_start}
      \end{overpic}
      \caption{
            Double points and connecting arc.
        }
    \end{subfigure}
    %%%%%%%%%%%%%
    \begin{subfigure}{.495\textwidth}
      \centering
      % include second image
      \begin{overpic}[width=0.95\textwidth]{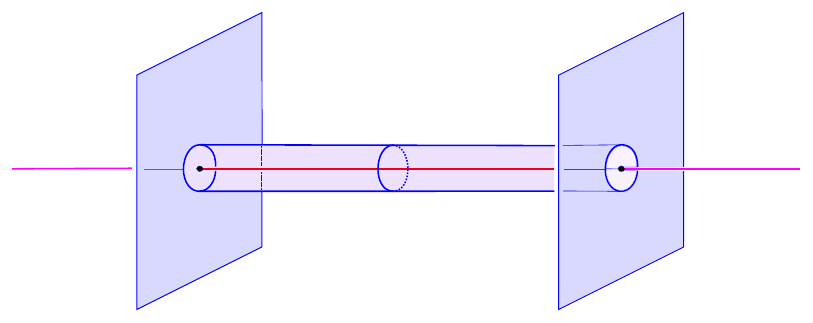}
      \put(34,25){\tiny linking annulus of {\color{red} $\alpha$}}
        \label{fig:surface_tubing_linking_annulus}
      \end{overpic}
      \caption{
            After the tubing.
        }
    \end{subfigure}
\caption{
    Tubing an immersed surface along an arc $\alpha$ (red) that connects oppositely signed double points. Sheets of the surface are drawn in pink and blue; the pink sheet is an arc persisting
    into the past and future. To tube the double points together, remove a disk in the blue sheet around each double point and add the linking annulus of the guiding arc $\alpha$ as shown on the right. 
    }
\label{fig:surface_tubing_definition}
\end{figure}
%%%%%%%%%% END Surface tubing definition double figure %%%%%%%%%%

Although we choose not to define it rigorously here, the procedure of ``tubing'' employed in the definition above is described in Remark $5.3$
of \cite{dave:lightbulb}, as well as in Definition $2.6$ of \cite{singh2019distances}. Indeed, isotopies between associated tubed surfaces are the focus of both papers. To ensure that the associated tubed surfaces that arise in our discussions are isotopic, we will be especially interested in regular homotopies of the following type. 

\begin{definition}
    A regular homotopy of length $n$ from $K$ to $U$ in $S^4$ is called \bit{arc-standard} if its standard Whitney disks $W_1, \dots, W_n$ and knotted Whitney disks $V_1, \dots, V_n$ have at least one Whitney arc in common for each $i$. 
\end{definition}

\begin{remark}
    \label{fq}
    It is unknown whether or not every $2$-knot in $S^4$ admits an arc-standard regular homotopy to the unknot, let alone one of minimal length. There are many \emph{non-simply connected} $4$-manifolds containing pairs of $2$-spheres between which there is no analog of an arc-standard homotopy.
    For instance, any pair of spheres related by such a homotopy
    must have vanishing Freedman-Quinn
    invariant\footnote{This concordance invariant was defined by Freedman
        and Quinn \cite{freedman-quinn:4-manifolds} in the '$90$s,
        and later corrected by Stong \cite{stong}.
        Schneiderman and Teichner give a nice exposition
        in  \cite{schneiderman2019homotopy}.
    } since in this case all double curves of the trace of the homotopy are trivially double covered. 
    However, there are many instances where this does not hold -- see \cite{schneiderman2019homotopy}, \cite{dave:lightbulb},
    or \cite{hannah} for example.
\end{remark}

With this terminology in place, we state our first result. Although this fact is implied by Singh's proof of Theorem $1.4$ in \cite{singh2019distances}, we state and prove it here in our setting. 

\begin{proposition}
    \label{std}  
    If there is a length $n$ arc-standard homotopy from $K$ to $U$,
    then $K$ can be unknotted with $n$ stabilizations.
\end{proposition}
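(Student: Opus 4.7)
The plan is to construct a single embedded surface that is simultaneously an $n$-fold stabilization of $K$ and an $n$-fold stabilization of the unknot $U$; since by \autoref{defunknot} there is a unique unknotted surface of each genus, this forces $\ust(K)\leq n$.

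First I would fix the given length-$n$ arc-standard regular homotopy and split it as in \autoref{disksdef} into $n$ finger moves on $K$ producing the standard immersion $\Sigma$ with $2n$ double points, followed by $n$ Whitney moves along the standard Whitney disks $W_1,\dots,W_n$ leading to $U$. Reversing the finger moves from $\Sigma$ back to $K$ is equivalently realized by $n$ Whitney moves along the knotted Whitney disks $V_1,\dots,V_n$. The arc-standard hypothesis provides, for each $i$, a common Whitney arc $\omega_i\subset\partial W_i\cap\partial V_i$. Using these shared arcs as guides, form the associated tubed surface $\Sigma^\sharp$ of \autoref{tubedsurface} by tubing each cancelling pair of double points of $\Sigma$ along $\omega_i$; this is an embedded surface of genus $n$.

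The crux is the following local \emph{tube-swap claim}, which is exactly the move appearing in \cite{dave:lightbulb} and used throughout \cite{singh2019distances}: given an immersed surface with a cancelling pair of double points paired by a Whitney disk $W$ and Whitney arc $\omega\subset\partial W$, the surface obtained by tubing along $\omega$ is ambiently isotopic to the surface obtained by first performing the Whitney move along $W$ and then performing a single $1$-handle stabilization whose guiding arc can be read off from $W$. Granting this, I would apply the claim at each $i$ using the standard disks $W_i$: since the Whitney moves along all of the $W_i$ collectively produce $U$, the surface $\Sigma^\sharp$ is isotopic to the result of $n$ stabilizations performed on $U$, hence is the genus $n$ unknotted surface. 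Symmetrically, applying the claim with the knotted disks $V_i$ exhibits $\Sigma^\sharp$ as an $n$-fold stabilization of $K$. Comparing these two descriptions of $\Sigma^\sharp$ yields $\ust(K)\leq n$.

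The main obstacle is the tube-swap claim itself: carefully verifying, in a neighborhood of a single Whitney disk, that the annular tube produced by tubing along a Whitney arc can be ambiently isotoped through $W$ to become a trivially attached $1$-handle after the Whitney move. This is a local $4$-dimensional model calculation developed in detail by Gabai and Singh. The arc-standard hypothesis is used precisely so that, at each $i$, both the standard and knotted swaps can be performed along the \emph{same} embedded arc $\omega_i$ on $\Sigma$; this ensures that the two stabilized surfaces so produced literally share their $n$ tubes, so that the final isotopies to an $n$-fold stabilization of $U$ and an $n$-fold stabilization of $K$ take place on one and the same embedded surface $\Sigma^\sharp$.
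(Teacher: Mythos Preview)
Your proposal is correct and follows essentially the same approach as the paper. The paper likewise forms the associated tubed surface of $\Sigma$ along the common Whitney arcs and verifies, via the local picture in \autoref{stabpdf}, that this surface is simultaneously isotopic to an $n$-fold stabilization of $K$ (using the knotted disks $V_i$) and of $U$ (using the standard disks $W_i$); your ``tube-swap claim'' is exactly the local isotopy depicted there.
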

    
%%%%%%%%%% FIG %%%%%%%%%%
\fig{200}{pictures/stabglucktwist}{
\put(-231,122){$\Sigma$}
\put(-360,0){$F_K$}
\put(-110,0){$F_U$}
\put(-360,120){$K$}
\put(-110,120){$U$}
\put(-175,188){\small{Whitney move}}
\put(-335,189){\small{Whitney move}}
\put(-165,178){\small{along $W_i$}}
\put(-325,179){\small{along $V_i$}}
\put(-475,98){\small{Attach}}
\put(-480,88){\small{$1$-handle}}
\put(-5,98){\small{Attach}}
\put(-10,88){\small{$1$-handle}}
\put(-215,100){\small{Tube}}
\put(-217,90){\small{along $\alpha_i$}}
\put(-207,180){$\alpha_i$}
\caption{
    ``Standard pictures'' of the spheres $U$ and $K$ (top right and left)
    given by Whitney moves on the immersion $\Sigma$,
    and their isotopic stabilizations $F_U$ and $F_K$ along the
    red or blue guiding arcs (lower right and left).
    Note that although the local models of $U$ and $K$
    after the Whitney moves look identical,
    the interiors of the Whitney disks $W_i$ and $V_i$,
    and hence these portions of $U$ and $K$, may be embedded very differently in $S^4$.
}
\label{stabpdf}
}
%%%%%%%%%% FIG %%%%%%%%%%

\begin{proof}
Such a regular homotopy is given by a set of standard Whitney disks $W_1, \dots, W_n$ and knotted Whitney disks $V_1, \dots, V_n$ for the standard immersion $\Sigma$ with $2n$ double points, as in \autoref{stddef} and \autoref{disksdef}.
Since the regular homotopy is arc-standard, 
by definition for each $i$ the standard and knotted Whitney disks
have at least one common Whitney arc $\alpha_i$. 

The end of the Whitney homotopy for each Whitney disk $W_i$ and $V_i$ gives a ``local model'' of the resulting embedded $2$-sphere, as illustrated in the top left and right of \autoref{stabpdf}. Stabilizing $K$ along guiding arcs connecting the sheets of $K$ parallel to each knotted Whitney disk gives an embedded genus $n$ surface $F_K$ shown on the bottom left of \autoref{stabpdf}. Likewise, stabilizing the unknot $U$ along guiding arcs connecting the sheets of $U$ parallel to each standard Whitney disk
gives a genus $n$ standard surface $F_U$ shown on the bottom right of \autoref{stabpdf}. 
Both $F_K$ and $F_U$ are isotopic to the associated tubed surface $\Sigma$ stabilized along the common Whitney arcs $\alpha_1, \dots, \alpha_n$ depicted on the bottom center of \autoref{stabpdf} (the tube along the Whitney arc $\alpha_i$ is shown in green).
Since the surface $F_U$ is a stabilization of the unknot,
it follows that $F_U$ and hence $F_K$ is unknotted.
\end{proof} 

%%%%%%%%%% FIG %%%%%%%%%%
\fig{180}{pictures/surface_tubing_start_no_movie}{
    \put(-205,110){{\textcolor{blue}{$a_i$}}}
    \put(-95,110){{\textcolor{blue}{$b_i$}}}
    \put(-150,120){{\textcolor{red}{$g_i$}}}
    \put(-200,65){{$p_i$}}
    \put(-100,65){{$q_i$}}
    \put(-150,65){{\textcolor{red}{$\alpha_i$}}}
    \caption{
        The initial situation in \autoref{lem:pi_1_associated_tubed}
        before the tubing, where the horizontal pink sheet 
        (with the red arc $\alpha_i$) lives completely in the present,
        while the vertical blue sheet is spread out in time.
        Also pictured are two meridians
        $a_{i}, b_{i}$ to the blue sheet,
        and the group element
        $g_{i}$ associated to the arc $\alpha_{i}$.
    }
    \label{fig:surface_tubing_movie_start}}
%%%%%%%%%%%%%%%%%%%%%%%%%%%

%%%%%%%%%% FIG %%%%%%%%%%
% \fig{120}{pictures/surface_tubing_movie_arc}{
%     \definecolor{darkspringgreen}{rgb}{0.09, 0.45, 0.27}
%     \put(-220,58){{$\textcolor{darkspringgreen}{\gamma}$}}
%     \caption{
%         5-frame movie (past-present-future from left to right)
%         of the result $S'$ of the tubing, the blue sheet
%         continues as the vertical lines further into the past and future.
%         One arc $\gamma_i$ is shown in the present in darker green,
%         together with the light green disk $D_i$
%         as a collection of small lines which shrink to points
%         and then disappear
%         in the past and future.
%     }
%     \label{fig:surface_tubing_movie_arc}}
%%%%%%%%%%%%%%%%%%%%%%%%%%%

%%%%%%%%%% FIG %%%%%%%%%%
\fig{190}{pictures/surface_tubing_homotopy_equivalence_version_2}{
    \put(-200,160){{$D_{i}$}}
    \put(-240,130){{$D'_{i}$}}
    \definecolor{darkspringgreen}{rgb}{0.09, 0.45, 0.27}
    \put(-230,160){{$\textcolor{darkspringgreen}{\gamma_{i}}$}}
    \put(-365,99){{$\textcolor{darkspringgreen}{\gamma_{i}}$}}
    \put(-418, 86){{\tiny \textcolor{gray}{normal circle}}}
    \caption{The figure depicts a  homotopy equivalence between the complements
        of $(S' \cup \gamma_{i}) - D'_{i}$ (left)
        and $S$ (right), where we keep
        track of where the gray circle normal
        to the disk $D'_{i}$ is taken. }
    \label{fig:surface_tubing_homotopy_equivalence}
}
%%%%%%%%%%%%%%%%%%%%%%%%

The following $\pi_1$ calculation is used in the proof of
\autoref{thm:1} and is very similar to Casson's proof of
\autoref{lem:pi_1_finger_move}, see \cite{casson1986three}. Before stating the lemma, we establish some necessary notation. For any $4$-manifold $X$, suppose that $S \subset X$ is an immersed surface with positive and negative double points $p_1, \dots, p_n$ and $q_1, \dots, q_n$, respectively. For each $i$, let $\alpha_i \subset S$ be an embedded arc connecting $p_i$ to $q_i$, and let $a_i, b_i \in \pi S$ be positively oriented meridians for the sheets of the double points that do \emph{not} contain $\alpha_i$. As illustrated in \autoref{fig:surface_tubing_movie_start}, the meridian $a_i$ is constructed by running along $\rho_i$, around the boundary of a disk normal to $S$, and back along $\rho_i^{-1}$ to the base point $\ast$ of $\pi_1 S$. The meridian $b_i$ is constructed analogously, but using the path $\eta_i$. 

The arc $\alpha_i \subset S$ corresponds to an element $ g_i \in \pi S$ given by the composition of paths $\rho_i \alpha_i' \eta_i^{-1}$,
where $\alpha_i'$ is a push-off of the arc $\alpha_i$
into the normal disk bundle of $S$ -- this element is hence well-defined
only up to twists around the normal disk bundle of $S$ restricted to $\alpha_i$.
However, as this indeterminacy does not affect the fundamental group calculations
below\footnote{This follows since the Clifford tori around the double points $p_i$ and $q_i$ allow the meridians $a_i$ and $b_i$ to commute with twists around $\alpha$.},
we suppress it from notation.  

\begin{lemma}[Tubing relation]
    \label{lem:pi_1_associated_tubed}
    Let $S \subset X^4$ be an immersed surface whose associated tubed surface $S'$  
    is constructed by tubing together oppositely signed double points $p_i$ and $q_i$ along arcs $\alpha_i \subset S$, as in \autoref{tubedsurface}. Then,
    \begin{equation*}
        \pi(S') \cong 
        \bigslant{ \pi S }{ \langle \langle g_i^{-1}a_i g_i  b_i^{-1}  \rangle \rangle }
    \end{equation*}
     for elements $a_i,b_i,g_i \in \pi S$ defined both in the paragraph above and illustrated in \autoref{fig:surface_tubing_movie_start}. 
\end{lemma}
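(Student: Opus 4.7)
The plan is to mirror the strategy in Casson's proof of the finger move relation (\autoref{lem:pi_1_finger_move}), exploiting the geometric picture already set up in \autoref{fig:surface_tubing_homotopy_equivalence}. Since the guiding arcs $\alpha_i$ are disjoint, a Mayer--Vietoris/van Kampen argument applied to disjoint neighborhoods of the tubes reduces the general statement to the case of a single tubing along an arc $\alpha$ pairing double points $p,q$. So we focus on showing that a single tubing produces the single relation $g^{-1} a g b^{-1} = 1$, where $a, b$ are the meridians to the non-$\alpha$ sheet near $p, q$ and $g$ corresponds to $\alpha$.

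The main geometric ingredient is a \emph{co-core circle} $\gamma$ of the newly attached tube, together with a small meridional disk $D$ for $\gamma$ (see \autoref{fig:surface_tubing_homotopy_equivalence}). First I would establish the homotopy equivalence
\[
    X \setminus \nu(S' \cup \gamma) \;\simeq\; X \setminus \nu(S),
\]
which holds because removing a neighborhood of the co-core $\gamma$ together with the tube exactly \emph{undoes} the surgery that produced $S'$ from $S$: the tube collapses back onto the two disks that were cut out near $p$ and $q$, leaving a neighborhood of the original immersed surface. This identifies $\pi_1(X \setminus \nu(S' \cup \gamma))$ with $\pi S$ in a canonical way.

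Next, $\pi S'$ is obtained from $\pi_1(X \setminus \nu(S' \cup \gamma))$ by gluing back the neighborhood of $\gamma$. Van Kampen then tells us
\[
    \pi S' \;\cong\; \bigslant{\pi S}{\langle\langle \mu_\gamma \rangle\rangle},
\]
where $\mu_\gamma$ is a meridian of $\gamma$ (i.e.\ $\partial D$), expressed in $\pi S$ via the homotopy equivalence above. So the whole lemma comes down to identifying $\mu_\gamma$ with the word $g^{-1} a g b^{-1}$ under that identification.

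This identification is the main obstacle, and it is a careful pushoff/tracking argument. One picks a basepoint on $\partial D$, and traces $\partial D$ around the tube: the loop starts in the sheet near $q$ linking it once (contributing $b^{\pm 1}$), runs along a pushoff of $\alpha$ into the complement (contributing $g^{\mp 1}$), links the sheet once near $p$ in the opposite sense (contributing $a^{\mp 1}$), and returns along the reverse pushoff of $\alpha$ (contributing $g^{\pm 1}$). Concatenating gives a conjugate of $g^{-1}a g b^{-1}$, and the indeterminacy from the choice of pushoff (twists around the normal bundle of $\alpha$) is absorbed by the fact that $a$ and $b$ commute with these twists through Clifford tori around $p$ and $q$, just as noted in the footnote before the statement. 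Since only the normal closure of $\mu_\gamma$ matters, the conjugation ambiguity is harmless, and we obtain the desired relation $g^{-1} a g b^{-1} = 1$. Iterating over all $i$ (or applying the disjoint version at once) completes the proof.
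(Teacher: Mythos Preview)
Your strategy is the same as the paper's---remove something from $X\setminus\nu(S')$ to recover $X\setminus\nu(S)$ up to homotopy, then observe that gluing it back attaches a $2$--handle and read off the relator---but there is a dimensional slip that breaks the argument as written. In a $4$--manifold a circle has codimension~$3$, so its tubular neighborhood is $S^1\times D^3$ and its ``meridian'' is a $2$--sphere, not a loop. Removing $\nu(\gamma)$ for a circle $\gamma$ therefore does \emph{not} change $\pi_1$, and the van~Kampen step cannot produce a relation $\mu_\gamma=1$. In particular your displayed claim $X\setminus\nu(S'\cup\gamma)\simeq X\setminus\nu(S)$ would force $\pi S'\cong\pi S$, which is false in general.

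The object you actually want is two--dimensional: the \emph{co-core disk} $D_i$ of the ambient $1$--handle, i.e.\ the normal $2$--disk to the $\alpha$--sheet at an interior point of $\alpha$, whose boundary lies on the tube. One then has the honest homotopy equivalence $X\setminus\nu(S'\cup D_i)\simeq X\setminus\nu(S)$, and re-inserting $\nu(D_i)$ is a $2$--handle attachment whose attaching circle is a meridian of $D_i$. There is one further wrinkle: $D_i$ is not clean in the complement of $S'$ because its center lies on the $\alpha$--sheet. The paper handles this by splitting $D_i$ into an arc $\gamma_i$ (center to boundary) and a residual disk $D_i'$; the arc has codimension~$3$ so removing it leaves $\pi_1$ unchanged, and now $D_i'$ is a clean $2$--disk whose normal circle gives the relator. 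Your verbal identification of that relator---link at $q$, run along a pushoff of $\alpha$, link at $p$ with the opposite sign, run back---is exactly how the normal circle of $D_i'$ looks after the homotopy equivalence, so that part of your argument is fine once the correct $2$--dimensional co-core is in place.
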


\begin{proof}
    For each $i$, consider a disk $D_i$ normal to $S$ at an interior point of the arc $\alpha_i$, as in \autoref{fig:surface_tubing_homotopy_equivalence}. The intersection of this disk with the tubed surface $S'$ then consists of $\partial D_i$, together with the point where $\alpha_i$ intersects $D_i$. Notice that the complement of $S' \cup D_1 \cup \dots \cup D_n$ is homotopy equivalent to the complement of the immersion $S$. So, to compare $\pi(S')$ and $\pi S$, we remove the regular neighborhoods of each disk $D_i$ from $X- S'$ to obtain $X-S$ in two stages: first we delete neighborhoods of embedded arcs $\gamma_i \subset D_i$ connecting $\alpha_i$ to $\partial D_i$, and then delete neighborhoods of the remaining disks $D_i' \subset D_i$ whose boundary circles run around $\partial D_i$ and then forward and back along $\gamma_i$, as in \autoref{fig:surface_tubing_homotopy_equivalence}.  
    
    Let $S_\gamma'$ denote the union
    $S' \cup  N(\gamma_1) \cup \dots \cup N(\gamma_n)$.
    Note that $\pi(S') \cong \pi_{1}(S^4 - S_\gamma')$
    since each $\gamma_i$ has codimension three.
    The complement $X-S$ is obtained from $X-S'_\gamma$ by removing regular neighborhoods of the disks $D_i'$. Dually, this implies that the complement $X-S'_\gamma$
    is obtained from $X-S$ by attaching $n$ many $2$-handles to $X-S$ along the boundaries of disks normal to $D'_i$ in the complement of
    $S'_\gamma \cup N(D_1') \cup \dots \cup N(D'_n)$,
    which is diffeomorphic to $X-S$. Therefore, $\pi(S')$ is obtained from $\pi S$ by adding $n$ relators -- namely the boundaries of these 2-handles.
    The boundaries of these 2-handles are exactly the small gray circles as show in \autoref{fig:surface_tubing_homotopy_equivalence}.
    We make these into elements of $\pi S$ by pre- and post-composing
    the circles with the gray arc from the basepoint as shown in
    \autoref{fig:surface_tubing_homotopy_equivalence}.
    These elements are then exactly the elements
    $g_i^{-1}a_ig_ib_i^{-1} \in \pi S$
    as is shown by the homotopy-equivalence in 
    \autoref{fig:surface_tubing_homotopy_equivalence}.
\end{proof}

We conclude the section by proving \autoref{thm:1} and \autoref{thm:2}.
Note that Singh gives the analog of \autoref{thm:1}
for his related invariants
$d_{\textrm{st}}$ and $\dsing$
in \cite[Theorem $1.4$]{singh2019distances}, and in fact,
our proof of \autoref{thm:1} relies on Lemmas $3.1$ and $4.1$ of his paper.
We are unable to provide any examples in which the $+1$ term is necessary,
and so leave this as a question in \autoref{questions};
this question is also left open in Singh's setting.  

\begin{proof}[Proof of \autoref{thm:1}]
    Take a length $\ufw(K)$ regular homotopy from $K$ to $U$, with associated tubed surfaces $F_U$ and $F_K$ constructed as in the proof of \autoref{std}. Note that since the homotopy is not necessarily arc-standard, the double points of $\Sigma$ are tubed together along different arcs; thus it unclear whether or not $F_U$ and $F_K$ are isotopic. In his proof of \cite[Theorem $1.4$]{singh2019distances}, Singh produces a sequence of tubed surfaces $T_1, \dots, T_m$ for the standard immersed sphere $\Sigma$ such that $T_1 = F_U$, $T_m = F_K$, and such that each consecutive pair $T_i$ and $T_{i+1}$ become isotopic after a single stabilization.

    Note that $\pi  T_i \cong \mathbb{Z}$  for each $i$. This follows from \autoref{lem:pi_1_associated_tubed}, since each $T_i$ is an associated tubed surface for the standard immersion $\Sigma$ with $\pi \Sigma \cong \ZZ$. Thus, any stabilization of $T_i$ is isotopic to the stabilization done along the trivial guiding arc. This, combined with the fact that $T_i$ and $T_{i+1}$ become isotopic after a single stabilization, implies that the trivial stabilizations of the tubed surfaces $T_1, \dots, T_m$ are all pairwise isotopic; in particular $F_U$ and $F_K$ become isotopic after a single stabilization.
\end{proof}

\begin{remark}
\label{rmk:middle_level_group}
Note that in the proof of \autoref{thm:1} above,
it is critical that each ``intermediate'' tubed surface $T_i$ has $\pi  T_i \cong \mathbb{Z}$. It was pointed out to us by Peter Teichner that these surfaces give interesting candidates for ``exotic'' unknotted surfaces. Although each $T_i$ becomes smoothly unknotted after a single stabilization, it is unclear whether each $T_i$ is even topologically unknotted (see the discussion in \autoref{questions}). 
\end{remark}

%%%%%%%%%% FIG %%%%%%%%%%
\fig{120}{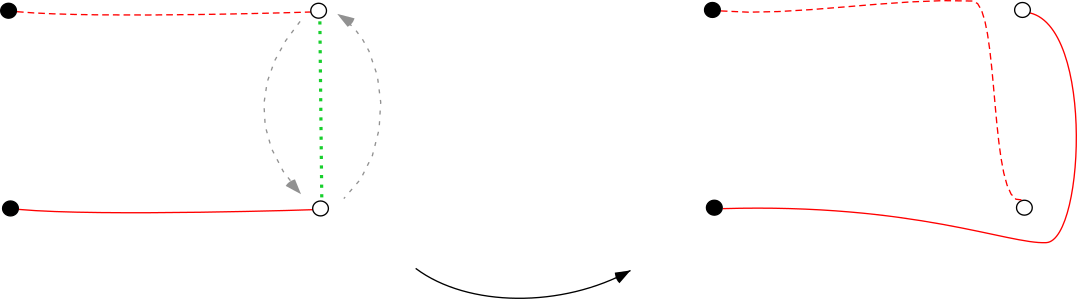}{
    \put(-370,40){\small{$w^*$}}
      \put(-24,125){\small{$p$}}
    \put(-22,47){\small{$p^*$}}
      \put(-149,125){\small{$n$}}
       \put(-315,75){\small{$\gamma$}}
    \put(-148,44){\small{$n^*$}}
     \put(-308,125){\small{$p$}}
    \put(-300,47){\small{$p^*$}}
      \put(-431,124){\small{$n$}}
    \put(-431,44){\small{$n^*$}}
    \put(-105,124){\small{$\tau(w)$}}
    \put(-105,43){\small{$\tau(w^*)$}}
    \put(-370,121){\small{$w$}}
    \put(-250,-12){\small{Braid twist $\tau$}}
    \caption{
        The automorphism $\tau$ of the domain of the immersion
        $f \colon S^2 \looparrowright S^4$ with image $\Sigma$.
    }
    \label{moves}}
%%%%%%%%%%%%%%%%%%%%%%%%%%%

\begin{proof}[Proof of \autoref{thm:2}]
    We argue that for $K$ with $\ufw(K)=1$, there is an arc-standard length one regular homotopy from $K$ to the unknot $U$. It then follows from \autoref{std} that $\ust(K)=1$.
    Start by letting $\Sigma$ denote the standard immersion with two oppositely signed double points. Fix a parametrization
    $f \colon S^2 \looparrowright S^4$ of $\Sigma$
    with double point pre-images $\bit{p}=\{p, p^*\}$ and $\bit{n}=\{n, n^*\}$. By definition of the Casson-Whitney number, there is a regular homotopy from $\Sigma$ to $K$ consisting of a single Whitney move along a knotted Whitney disk $V$ with pre-image $f^{-1}(\partial V)$ equal to a pair of ``knotted'' arcs $v, v^* \subset S^2$ with $\partial v = \{p, n\}$ and $\partial v^* = \{p^*, n^*\}$.   
    We claim that there is a standard Whitney disk $W$ one of whose boundary arcs has pre-image equal to $v$. To see this, let $W$ be any standard Whitney disk with $f^{-1}(\partial W)$ equal to the ``standard'' arcs $w, w^* \subset S^2$.
    Consider the map $\tau \colon S^2 \to S^2$
    given by a braid twist about the points $p \cup p^*$, as in \autoref{moves}, that fixes the arc $\gamma$ connecting $p$ to $p^*$ setwise. Since $\Sigma$ is a standard immersion, the loop $f(\gamma)$ bounds an embedded disk (usually referred to in the literature as an \emph{accessory disk}) in $S^4$ away from $\Sigma$.
    It therefore follows from
    \cite[Lemma $3.9$]{schneiderman2019homotopy}\footnote{
    Although Schneiderman and Teichner are working in a different context,
    their Lemma $3.9$ applies in our case since (as they note in the discussion in Section 3.G.) their isotopy is supported locally, in the neighborhood of an accessory disk.}
    that there is an ambient isotopy $\rho \colon S^4 \times I \to S^4$ with $\rho_1(\Sigma)=\Sigma$ carrying the standard Whitney disk $W$ to one whose boundary arcs are the image under $\tau$ of those for $W$, as illustrated in \autoref{moves} as well as \cite[Figure $18$]{schneiderman2019homotopy}. We retain the labels $w$, $w^*$, and $W$ even after such an isotopy occurs. 

    The isotopy $\rho$ can be applied once if necessary, as shown in the top row of \autoref{step3}, so that $\partial w= \{p, n\}=\partial v$ for some choice of labelling of the standard arcs. Note that $w$ and $v$ are now isotopic rel the points in $\bit{p} \cup \bit{n}$ if and only if the loop $w \cup v$ (with either orientation) is null homologous in the annulus $S^2 - \{p^*, n^*\}$. This can be arranged by applying the isotopy $\rho^2$ as shown in the bottom row of \autoref{step3} to insert full twists of $w$ around $p^*$. For the standard Whitney disk $W$ with $w=v$, the regular homotopy from $K$ to $U$ consisting of the finger move that is inverse to the Whitney move along the knotted Whitney disk $V$, followed by the Whitney move along $W$, is arc-standard. 
\end{proof}

%%%%%%%%%% FIG %%%%%%%%%%
\fig{320}{pictures/moves}{
    \put(-230,175){\small{(braid twist)}}
    \put(-205,0){\small{$\tau^2$}}
    \put(-207,190){\small{$\tau$}}
      \put(-350,118){\small{$w$}}
         \put(-250,94){\small{$v$}}
         \put(4,94){\small{$v$}}
      \put(-350,52){\small{$w^*$}}
        \put(-350,304){\small{$w$}}
        \put(-95,270){\small{$v$}}
        \put(-350,270){\small{$v$}}
      \put(-350,238){\small{$w^*$}}
     \put(-395,121){\small{$n$}}
      \put(-395,55){\small{$n^*$}}
      \put(-395,307){\small{$n$}}
      \put(-395,241){\small{$n^*$}}
        \put(-292,121){\small{$p$}}
      \put(-290,55){\small{$p^*$}}
      \put(-292,307){\small{$p$}}
      \put(-290,241){\small{$p^*$}}
           \put(-144,121){\small{$n$}}
      \put(-144,55){\small{$n^*$}}
      \put(-144,307){\small{$n$}}
      \put(-144,241){\small{$n^*$}}
        \put(-40,122){\small{$p$}}
      \put(-40,55){\small{$p^*$}}
      \put(-40,307){\small{$p$}}
      \put(-40,241){\small{$p^*$}}
       \put(-106,116){\small{$\tau^2(w)$}}
      \put(-111,52){\small{$\tau^2(w^*)$}}
      \put(-100,308){\small{$\tau(w)$}}
      \put(-100,238){\small{$\tau(w^*)$}}
    \caption{The ambient isotopies $\rho$ and $\rho^2$
    from the proof of \autoref{thm:2} used to move the standard disk $W$
    to one with $w=v$.
    The result of each isotopy is illustrated from the perspective
    of the induced maps $\tau$ and $\tau^2$ on the domain of the immersion
    $f \colon S^2 \looparrowright S^4$ with image $\Sigma$.}
    \label{step3}
}
%%%%%%%%%%%%%%%%%%%%%%%%%%%

% % % % % % % % % % % % % % % %
\section{Geometric upper bounds}
\label{geo}
% % % % % % % % % % % % % % % %

The Casson-Whitney unknotting number can be bounded from above geometrically, by constructing simple regular homotopies to the unknot. We do this for some well-known families of spheres. 

\begin{definition} \label{ribbondef} 
    A \emph{ribbon 2-knot}
    is formed from $n$ stabilizations of the $(n+1)$-component
    unlink $U_1 \sqcup \dots \sqcup U_{n+1}$ in $S^4$, as in 
    \autoref{fig:ribbon_2_knot}.
    The minimal number $n$ needed to put a ribbon $2$-knot $K$
    in this form is called the \emph{fusion number} of $K$,
denoted $\fus(K)$.  
\end{definition} 

\begin{figure}
    \centering
    \includegraphics[width=0.6\linewidth]{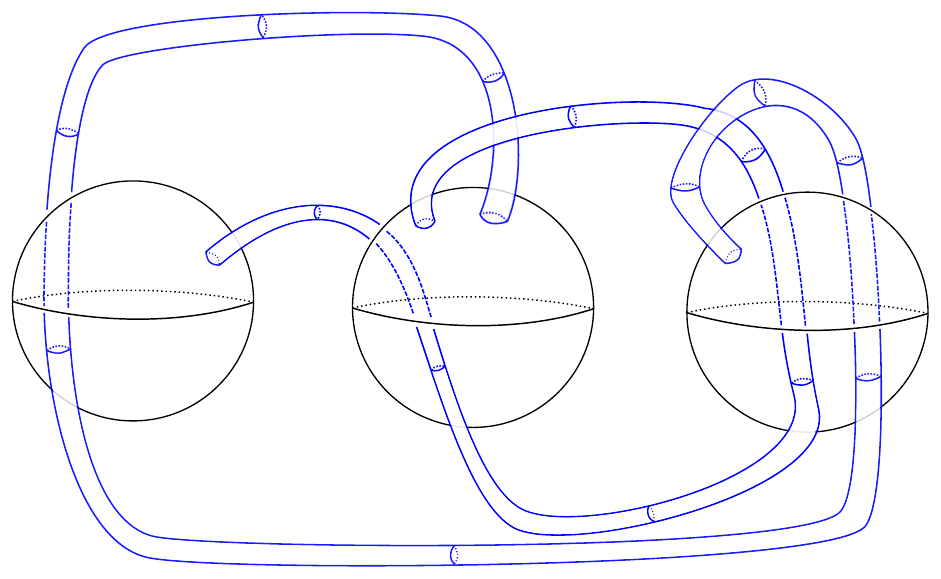}
        \put(-230,60){\small{$U_1$}}
        \put(-130,60){\small{$U_2$}}
        \put(-60,60){\small{$U_3$}}
    \caption{Ribbon 2-knot}
    \label{fig:ribbon_2_knot}
\end{figure}

\begin{remark}[Tube map]
    Satoh proved in \cite{satoh2000tube} that every ribbon 2-knot
    is the tube of a virtual arc.
    Essentially, one can use virtual diagrams
    to make a shorthand picture for a broken surface diagram of a ribbon 2-knot.
    %Classical crossings `tube' to a kind of linking,
    %where one tube crosses from above and then below the other tube.
    %Virtual crossings `tube' to not actually cross in $\mathbb{R}^3$,
    %only in the further projection to $\mathbb{R}^2$
    %which is made when we draw the broken surface diagram flat.
    In this language, changing a virtual crossing to a positive
    or a negative classical crossing is achieved by a finger move
    and then a Whitney move on its tube (the analog in this setting of the homotopy in \autoref{fig:tube_map_crossing_change}).
    Thus if $K$ is a ribbon 2-knot and $k$
    is any virtual arc such that $\operatorname{Tube}(k)=K$,
    any sequence of crossing changes which unknots $k$
    as a virtual (or welded) arc yields a sequence of
    finger and Whitney moves which unknots $K$. 
\end{remark}

%%%%%%%%%% FIG %%%%%%%%%%
\fig{100}{pictures/ribbon_knot_pull_out_tube}{
    \put(-150,47){\small{WM}}
    \put(-147,27){\small{FM}}
    \put(-308,47){\small{FM}}
    \put(-310,27){\small{WM}}
    \put(-383,22){\small{$U_i$}}
    \put(-228,22){\small{$U_i$}}
    \put(-68,22){\small{$U_i$}}
    \caption{A regular homotopy of a ribbon $2$-knot $K$, as in \autoref{ribbondef}, supported near one component $U_i$ of the unlink and one guiding arc of a stabilization. The various shadings of $K$ suggest its fourth coordinates -- so, the red and blue portions of the surface are disjoint from the black ones.  The homotopy consists of one finger move followed by one Whitney move, and (thought of from left to right) has the effect of removing a meridian of $U_i$ from the word in $\pi (U_1 \sqcup \dots \sqcup U_{n+1})$ giving the homotopy class of the guiding arc of the stabilization.}
    \label{fig:tube_map_crossing_change}}
%%%%%%%%%%%%%%%%%%%%%%%%%%%

In \cite{miyazaki1986relationship}, Miyazaki proved that 
$\ust(K) \le \fus(K)$ for a ribbon $2$-knot $K$. We prove the corresponding statement for the Casson-Whitney number in \autoref{thm:fusion_upper_bound} below. The proof is inspired by Miyazaki's, however the argument is subtler, so we first sketch Miyazaki's argument. 

Let $K$ be a ribbon 2-knot, formed by stabilizing the unlink $U_1 \sqcup \dots \sqcup U_{n+1}$ along guiding arcs connecting consecutive components $U_i$ and $U_{i+1}$, as in \autoref{ribbondef}.
The `obvious' stabilizations of $K$ which fuse $U_i$ to $U_{i+1}$ as in \autoref{fig:ribbon_2_knot_with_trivial_bands}
result in an unknotted surface $K^\prime$: thinking of $K^\prime$ as first formed by attaching this second set of tubes, and then the original tubes defining $K$, produces the same surface $K^\prime$. However, this is clearly unknotted, since the `obvious' tubes result in an unknotted sphere, and so the original tubes are (trivial) stabilizations of an unknotted surface, which must be unknotted.

Although one could perform $\fus(K)$ finger moves to abelianize the group of $K$, the rest of Miyazaki's argument breaks down in our case: if one thinks of the finger moves as performed on the $n+1$ component unlink, then the group of the complement is not abelian since it takes $\binom{n+1}{2}$ finger moves to abelianize the group of the complement of this unlink. To remedy this, we think of all but one of the tubes as already attached and proceed by induction, allowing us to work with 2 components instead of $n+1$.

%%%%%%%%%% FIG %%%%%%%%%%
\begin{figure}
    \centering
    \includegraphics[width=0.6\linewidth]{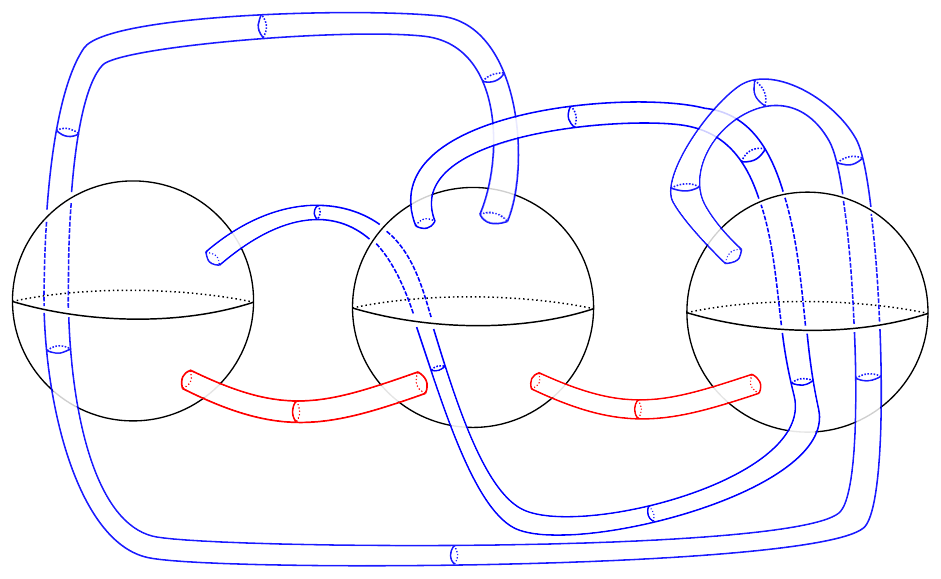}
        \put(-240,62){\small{$U_1$}}
        \put(-134,62){\small{$U_2$}}
        \put(-60,62){\small{$U_3$}}
    \caption{Miyazaki's proof that $\ust(K) \le \fus(K)$:
    The red stabilization of the black unlink is the unknot $U$. 
   Hence, the guiding arcs for the blue stabilizations are isotopic rel boundary to trivial arcs in the complement of $U$.}
    \label{fig:ribbon_2_knot_with_trivial_bands}
\end{figure}
%%%%%%%%%%%%%%%%%%%%%%%%%%%

% % % % % % % %
% Restating theorem 1.4 from the introduction
\fusionbound*
% % % % % % % %

\begin{proof}
    Let $n = \fus(K)$.  Then, as in \autoref{ribbondef}, the knot $K$ can be obtained from the unlink $\mathcal{U}= U_1 \sqcup \dots \sqcup U_{n+1}$ by $n$ stabilizations along guiding arcs $\alpha_1, \dots, \alpha_n$. 
    After an isotopy, we may assume that each $\alpha_i$ connects $U_i$ to $U_{i+1}$
    as in \autoref{fig:ribbon_2_knot}.
    Let $L$ be the  $2$-component link obtained by stabilizing the unlink only along the guiding arcs
    $\alpha_2, \dots ,\alpha_n$.
    Recall from \autoref{sec:background}, in particular \autoref{lem:pi_1_handle}, that 
    \begin{align*}
        \pi L \cong
        \langle m_1,m_2,\dots,m_{n+1} 
        \mid \;
         m_j^{g_j} = m_{j+1} \text{ for } 1<j<n+1 \rangle
    \end{align*}
    where the $g_j$ correspond to the guiding arcs $\alpha_i$ as in \autoref{defguidingarc}, and
    $m_1, \dots, m_{n+1} \in \pi \mathcal U$ are meridians of each component $U_1, \dots, U_{n+1}$. 
    
    Perform $n$ finger moves to $L$ along trivial guiding arcs 
    from $U_{n+1}$ to $U_{i}, i \le n$
    as in \autoref{fig:ribbon_proof_frame_2}
    and call the resulting immersed $2$-component link $S$.
    By \autoref{lem:pi_1_finger_move}, we have made $m_{n+1}$
    commute with $m_i$ for all $i<n+1$,
    therefore by considering the previous relations we see that
    $\pi S \cong \mathbb{Z} \oplus \mathbb{Z}$ generated by $m_1$ and $m_2$.  
    
    Now consider the element $g_1$ corresponding
    to the guiding arc $\alpha_1$ as in
    \autoref{fig:ribbon_proof_frame_2}.
    Since $\pi S$ is both abelian and generated by
    $m_1$ and $m_2$, by \autoref{rem:boundarytwist},
    $\alpha_1$ is isotopic to a trivial arc between $U_1$ and $U_2$
    as shown in \autoref{fig:ribbon_proof_frame_3}.
    We can now undo the finger move
    that intersects $U_1$ (i.e.\ do the Whitney move) 
    and proceed, by the same reasoning,
    to straighten out all of the other arcs $\alpha_i$
    to be trivial arcs as in
    \autoref{fig:ribbon_proof_frame_4}.
    Proceeding in this way unknots $K$
    (by trivializing the guiding arcs $\alpha_1,  \dots, \alpha_n$)
    with $n$ finger moves and $n$ Whitney moves.  
\end{proof}

%%%%%%%%%% 4 x 4 figure %%%%%%%%%%
\begin{figure}[ht]
    \begin{subfigure}{0.475\textwidth}
      \centering
      % include first image
      \begin{overpic}[width=0.95\textwidth]{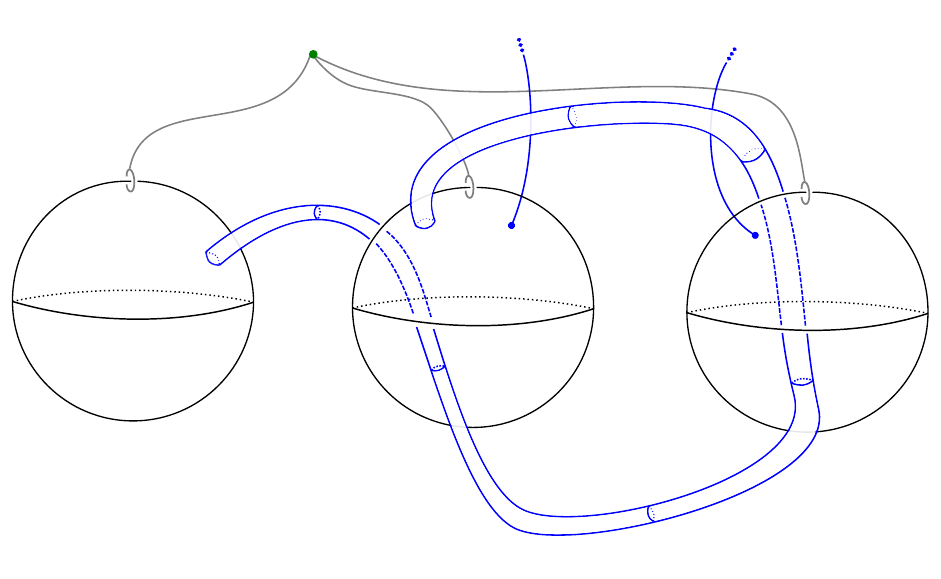}
      \end{overpic}
      
      \caption{Ribbon presentation of
      $K$ with the first stabilization
      along the guiding arc $\alpha_{1}$
      drawn as a blue tube.
      Only
      the guiding arcs (blue) of all the other
      stabilizations are shown.
      \label{fig:ribbon_proof_frame_1}}
    \end{subfigure}
    %%%%%%%%%%%%%
    \hfill
    %%%%%%%%%%%%%
    \begin{subfigure}{0.475\textwidth}
      \centering
      % include second image
      \begin{overpic}[width=0.95\textwidth]{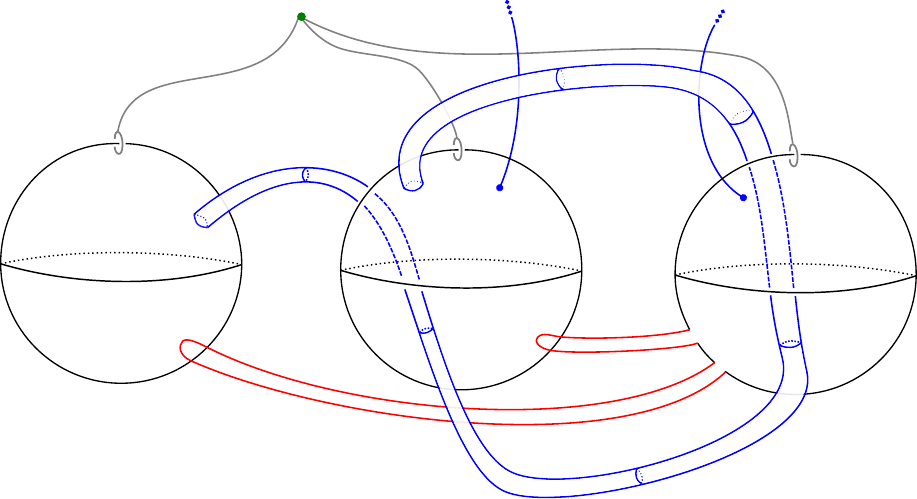}
      \end{overpic}
      
      \caption{Finger moves (red) along trivial arcs.
      \label{fig:ribbon_proof_frame_2}}
    \end{subfigure}
    %%%%%%%%%%%%%
    \vskip\baselineskip
    %%%%%%%%%%%%%
    \begin{subfigure}{0.475\textwidth}
      \centering
      % include first image
      \begin{overpic}[width=0.95\textwidth]{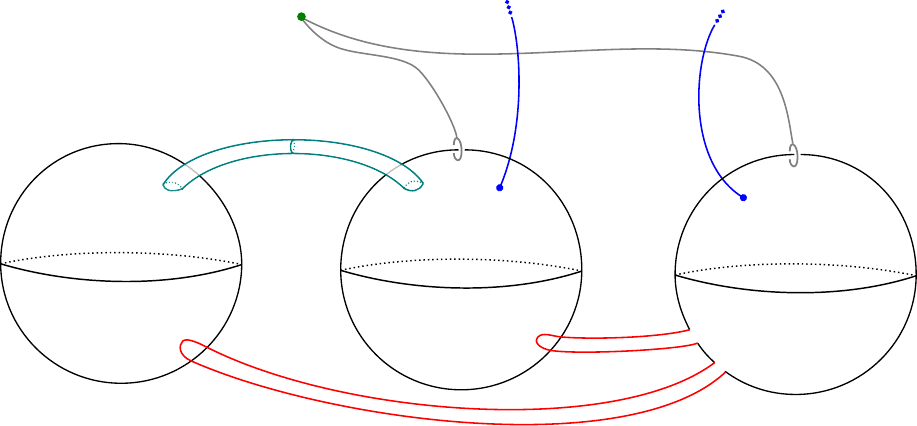}
      \end{overpic}
      
      \caption{The guiding arc of the first
      stabilization
      is isotopic to a trivial arc in the
      complement of the immersion.
      \label{fig:ribbon_proof_frame_3}}
    \end{subfigure}
    %%%%%%%%%%%%%
    \hfill
    %%%%%%%%%%%%%
    \begin{subfigure}{0.475\textwidth}
      \centering
      % include second image
      \begin{overpic}[width=0.95\textwidth]{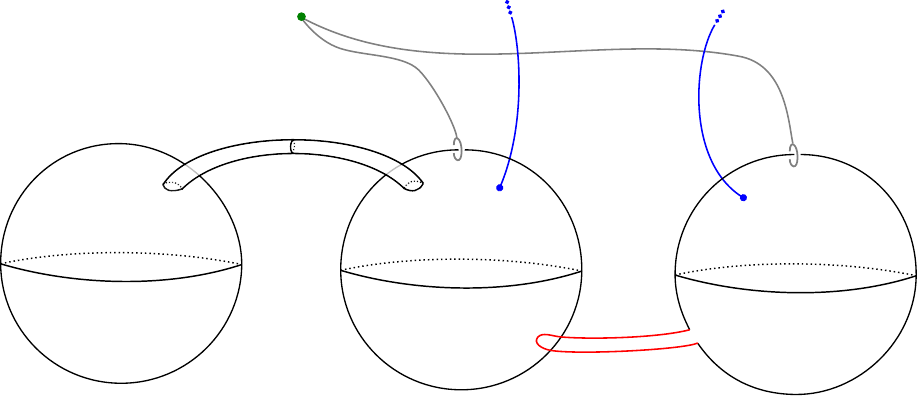}
      \end{overpic}
      
      \caption{Undo the first finger move
      in order to proceed inductively and trivialize the
      remaining stabilizations.
      \label{fig:ribbon_proof_frame_4}}
    \end{subfigure}
    %%%%%%%%%%%%%
    
\caption{
    Illustrating the proof of
    the fusion number upper bound in
    \autoref{thm:fusion_upper_bound}.
    }
    \label{fig:ribbon_proof}
\end{figure}
%%%%%%%%%%%%%%%%%%%%%%%%%%%

\begin{remark}
    \label{rem:doublystd}
    The regular homotopy in the proof of \autoref{thm:fusion_upper_bound} above
    has standard and knotted Whitney disks whose entire boundaries agree
    (so, they are ``doubly'' arc-standard).
    However, since the homotopy of the tubes in the proof
    interacts with the standard Whitney disks,
    the interiors of the standard and knotted Whitney disks are necessarily distinct.
\end{remark}

The inequality $ \ufw(K) \le \fus(K)$ from \autoref{thm:fusion_upper_bound} is sometimes strict. For instance, let $k$ be a 1-knot with unknotting number $u(k) = 1$ and
meridional rank $\mu(k) > 2$ (in fact, it is shown in \cite{baader2019symmetric} that there exist unknotting number one knots with arbitrarily large meridional rank).
Then, we will see that the spun knot $K= \tau^0(k)$, defined below, has $\ufw(K) = 1$
by \autoref{thm:lower_bound_classical_unknotting}.
Moreover, the isomorphism between $\pi K$ and $\pi k$
preserves the meridians, and so the meridional ranks of $K$ and $k$ are equal.
This gives $ \ufw(K) < \fus(K)$, since spun knots are ribbon and the meridional rank of any ribbon 2-knot is less than or equal to one more than its fusion number.

A generalization of this family of spheres for which $\ufw$ is particularly convenient to analyze is constructed by `spinning' $3$-balls containing properly embedded knotted arcs through an open book decomposition of $S^4$. 

\begin{definition}[Twist spun knots]
    \label{spindef}
    Given a $1$-knot $k \in S^3$, let $k'$ be the properly embedded knotted arc in the $3$-ball 
    whose tubular neighborhood $N(k')$ has complement
    $B^3 - N(k')$ diffeomorphic to $S^3 - N(k)$.
    For $n \in \mathbb{Z}$, consider the quotient
    \begin{align*}
     \quad ^{\textstyle (B^3, k') \times S^1}\Big/_{\textstyle (r_{n,\theta}(x), \theta) \sim (x, 0), ~ x \in \partial B^3, ~ \theta \in [0, 2\pi]}
    \end{align*}
    \noindent where $r_{n,\theta}: B^3 \to B^3$ denotes the ambient isotopy rotating $B^3$ 
    by an angle of $n \theta$
    about an axis with endpoints $\partial k' \subset \partial B^3$. For each $n$, this quotient space is diffeomorphic to $S^4$, and gives an open book decomposition with binding an unknotted $2$-sphere $U$ and 3-ball pages $B^3_\theta$ for all $\theta \in S^1$. The quotient of $k' \times S^1$ is a 2-sphere $\tau^n k \subset S^4$ called the \bit{$n$-twist spin} of $k$. 
\end{definition}

Due to Artin \cite{artin}, the collection of $0$-twist spun knots, often simply called `spun knots', were the first examples of non-trivially knotted spheres in $S^4$. Artin proved that the group of the spun knot $\tau^0(k)$ is isomorphic to the group of the classical knot $k$, showing that every 1-knot group is also a 2-knot group. 

Twist spinning was introduced by Zeeman in \cite{zeeman1965twisting} as a generalization of the spinning construction. For $n \not =0$, Zeeman proved that the resulting twist spun knot is fibered by the $n$-fold cyclic branched cover of $k$. Thus $\tau^{\pm1} k$ is unknotted, for all $k$. Twist spun knots provide a large generalization of spun knots. Cochran proved that any non-trivial twist spun knot $\tau^n k$ with $n\neq0$ is not ribbon \cite{cochran}, in contrast to spun knots, which are always ribbon.

%%%%%%%%%% FIG %%%%%%%%%%
\fig{160}{pictures/spin}{
  \put(-336,68){\small{$-k'$}}
  \put(-53,65){\small{$B^3_\theta$}}
  \put(-255,83){\small{$U$}}
  \put(-84,80){\small{$U$}}
  \put(-200,85){\small{$k'$}}
  \put(-50,110){\small{$k'$}}
  \put(-252,135){\small{$k'$}}
  \put(-278,16){\small{$-k'$}}
    \caption{A schematic for the spin $\tau^0 k$
    and the twist spins $\tau^{n} k$ of a $1$-knot $k$. The open book decomposition of $S^4$ from Definition \autoref{spindef} has been knocked down one dimension on the left, and so the blue $2$-sphere $U$ around which the knotted arc is spun is instead a blue circle (only an arc of which is drawn). 
    Note that viewing a page $B^3_\theta$ from the
    ``opposite side'' reverses its orientation,
    and therefore also the orientation of $k$.}
    \label{spinpic}}
%%%%%%%%%%%%%%%%%%%%%%%%%%%

\begin{lemma}
    \label{twistregionfm} 
    Fix two parallel strands of a $1$-knot $k \subset S^3$, and let $k_s$ denote the knot obtained by inserting $s$ full twists into these strands. Then, for any $n \in \mathbb{Z}$, there is a length one regular homotopy between the twist spins $\tau^n(k)$ and $\tau^n(k_s)$. 
\end{lemma}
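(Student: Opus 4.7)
The plan is to localize the modification $k \rightsquigarrow k_s$ to a four-dimensional product region in $S^4$ and construct the required length-one regular homotopy there. First I would choose a 3-ball $B_0$ lying in the interior of $B^3$ such that $k \cap B_0$ consists of two trivial parallel arcs and $k_s \cap B_0$ is obtained from these by inserting $s$ full twists, with $k$ and $k_s$ agreeing outside $B_0$. Because $B_0$ misses $\partial B^3$, the identification $r_{n,\theta}$ acts trivially on $B_0 \times S^1$, so this product embeds injectively as a four-dimensional region in $S^4$. Inside it, $\tau^n(k)$ restricts to two parallel annuli $A_1 \sqcup A_2$ (each an arc cross $S^1$), while $\tau^n(k_s)$ restricts to an $s$-twisted pair $A_1^{\prime} \sqcup A_2^{\prime}$; outside this region, the two twist spins coincide.

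It then suffices to exhibit a length-one regular homotopy from $A_1 \sqcup A_2$ to $A_1^{\prime} \sqcup A_2^{\prime}$ supported in $B_0 \times S^1$. I would begin with a single finger move pushing $A_1$ through $A_2$ along a short arc, producing an immersion $\Sigma$ with a canceling pair of double points $p^{\pm}$ paired by a natural Whitney arc $\omega_1 \subset A_1$ running through the finger and a short Whitney arc $\omega_2 \subset A_2$. The trivial Whitney disk bounded by $\omega_1 \cup \omega_2$ simply undoes the finger move. However, because each $A_i$ is an annulus with a nontrivial $S^1$-factor, one can instead replace $\omega_2$ by an embedded arc $\omega_2^{(s)}$ on $A_2$ from $p^+$ to $p^-$ that wraps $s$ additional times around the $S^1$-direction, and then fill in an embedded Whitney disk $W_s$ with $\partial W_s = \omega_1 \cup \omega_2^{(s)}$ lying in the product region between the annuli.

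The core of the argument is to verify that the Whitney move along $W_s$ transforms $\Sigma$ precisely into the $s$-twisted pair $A_1^{\prime} \sqcup A_2^{\prime}$. I would justify this by tracking the local parametrization during the Whitney move: as the move sweeps across $W_s$, one sheet is dragged $s$ full times around the $S^1$-factor relative to the other, geometrically realizing exactly the $s$-fold twist of the parallel arcs in $B_0$. This is the step I expect to be the main obstacle, since it requires careful bookkeeping of framings and wrapping numbers to confirm that the result is $\tau^n(k_s)$ rather than some other twisted pair, and possibly an appeal to \autoref{rem:boundarytwist} to normalize the framing of $W_s$. Once this local model is verified, combining the finger and Whitney moves inside $B_0 \times S^1$ with the identity regular homotopy outside yields the desired length-one regular homotopy from $\tau^n(k)$ to $\tau^n(k_s)$.
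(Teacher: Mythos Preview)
Your localization to $B_0 \times S^1$ and the initial finger move are fine, but the Whitney disk $W_s$ you propose cannot exist inside $B_0 \times S^1$ when $s \neq 0$. Project the Whitney circle $\omega_1 \cup \omega_2^{(s)}$ to the $S^1$ factor: the short arc $\omega_1$ contributes nothing, while $\omega_2^{(s)}$ has degree $s$ by construction. Thus the circle represents $s$ times the generator of $H_1(B_0 \times S^1)\cong\mathbb{Z}$ and is not even nullhomotopic in $B_0 \times S^1$, so it bounds no disk there whatsoever---embedded, framed, or otherwise. Your regular homotopy therefore cannot be supported in the product region as you claim. One might hope to find $W_s$ in all of $S^4$ instead, but then the construction is no longer local, and producing an embedded framed Whitney disk in a complement whose fundamental group is typically nonabelian is precisely the hard step your outline does not address.

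The paper's argument sidesteps this entirely: it never builds a nonstandard Whitney disk. Instead it performs the finger move along a guiding arc lying inside a single page $B^3_0$ of the open book and observes, via the picture in \autoref{fmspunpic}, that the resulting immersed sphere is simultaneously the result of a finger move on $\tau^n(k_{s+1})$ along an arc in $B^3_0$ and of a finger move on $\tau^n(k_s)$ along an arc in $B^3_\pi$. A rotation of the open book by $\pi$ then identifies $\Sigma_s$ with $\Sigma'_s$, and chaining these equalities gives a single immersion reached from every $\tau^n(k_s)$ by one finger move. Each Whitney move used is just the inverse of one of these standard finger moves, so the existence of the required embedded Whitney disks is automatic. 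The key input you are missing is this rotational symmetry of the twist-spin construction, which is what globalizes the argument and replaces the unavailable local disk.
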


\begin{proof}
    By performing a finger move on $\tau^{n}(k_{s+1})$
    along the arc $\alpha_{s+1} \subset B^{3}_{0}$
    as in \autoref{fmspunpic}, we obtain an immersed surface
    $\Sigma_{s+1}$ that is also obtained by a finger move
    to $\tau^{n}(k_{s})$ along $\alpha'_{s} \subset B^3_{\pi}$,
    which we will also denote $\Sigma'_{s}$,
    so that $\Sigma'_{s} = \Sigma_{s+1}$. Note that the twist parameter $n$ is unchanged since the twisting can be assumed to occur in a small interval in $S^1$ away from the double points of the immersion, the knotted arc $k_s$ twists $n$ times in both $\Sigma'_{s}$ and $\Sigma_{s+1}$. 
    
    By instead performing a finger move on $\tau^{n}(k_{s})$ along the arc
    $\alpha_{s} \subset B_{0}^3$,
    we obtain a surface $\Sigma_{s}$ where $\Sigma_{s} = \Sigma'_{s}$
    by rotation.
    Similarly by performing a finger move
    to $\tau^{n}(k_{s-1})$ along the arc $\alpha'_{s-1} \subset B'_{\pi}$
    we obtain a surface $\Sigma'_{s-1}$ with $\Sigma'_{s-1} = \Sigma_{s}$.
    
    Thus, we have equivalent immersed surfaces
    \[
        \ldots = \Sigma_{s+1} = \Sigma'_{s} = \Sigma_{s} = \Sigma'_{s-1} = \ldots
    \]
    so that for any $s, t \in \ZZ$, the two knots
    $\tau^{n}(k_{s})$, $\tau^{n}(k_{t})$ are related by a single finger and
    Whitney move.
\end{proof} 

%Consider the homotopies $h^\pm: I \times S^1 \to B^3$ taking $k_s'$ first to $k_{s \pm 1}'$ by adding a clasp of sign $\pm 1$ to the twist region, and then removing this clasp to get back to $k'_s$. This homotopy has two non-transverse double points at $\theta= \pm \pi/2$ when the clasp is added and removed. 
%Now, a finger move of the twist spin $\tau^n(k_s)$ along the guiding arc $\alpha_s \subset B^3_0$ gives the immersed sphere $\Sigma_s$ that is pictured in two ways on the bottom row of \autoref{fmspunpic}: intersecting $B^3_\theta$ in the arcs $r_{n,\theta}(h^\pm_\theta(k'))$ where $r_{n,\theta}$ is the automorphism from \autoref{spindef}. As suggested by the figure, the immersions $\Sigma_{s \pm 1}$ gotten by finger moves on $\tau^n(k_{s \pm 1})$ along the guiding arcs $\alpha_{s \pm 1}$ are ambiently isotopic to $\Sigma_s$.

% Inducting on $s$ then shows that each immersed sphere $\Sigma_s$
% is also ambiently isotopic to $\Sigma_0$,
% so we recognize that there are finger move on
% $\tau^{n}(k)$ and $\tau^{n}(k_{s})$ which give rise to isotopic immersions.

Some implications of \autoref{twistregionfm} are immediate.
For instance, as there is a length one regular homotopy between $1$-knots
related by a single crossing change, we obtain the following corollary. 

\begin{corollary}
    \label{thm:lower_bound_classical_unknotting}
    Let $k \colon S^1 \hookrightarrow S^3$
    be a classical knot.
    For any twist spin $\tau^n k$,
    $\ufw(\tau^n k)\leq u(k)$,
    where $u(k)$ is the classical unknotting number of $k$. 
\end{corollary}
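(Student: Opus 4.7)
The plan is to lift a classical unknotting sequence of $k$ to a regular homotopy of twist spins of total length $u(k)$, applying \autoref{twistregionfm} once per crossing change, and then to observe that the twist spin of the unknot is the unknotted $2$-sphere.

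First, fix a minimal unknotting sequence $k = k_0, k_1, \dots, k_{u(k)} = U_0$, where each $k_{i+1}$ is obtained from $k_i$ by a single crossing change and $U_0$ denotes the unknot in $S^3$. A crossing change is an instance of the twist insertion between two parallel strands handled by \autoref{twistregionfm}: the finger move constructed in that proof pushes one strand of $k_i$ through another within a single page $B^3_{\theta}$, which has precisely the effect of changing the crossing, while the paired Whitney move occurs on the opposite page. Thus for each $i$ there is a length-one regular homotopy from $\tau^n(k_i)$ to $\tau^n(k_{i+1})$.

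Second, concatenate these $u(k)$ length-one regular homotopies to obtain a single regular homotopy from $\tau^n(k)$ to $\tau^n(U_0)$. After the standard rearrangement recalled in \autoref{sec:background} (pushing all finger moves to occur before all Whitney moves without altering the total count), the length of the resulting regular homotopy is exactly $u(k)$.

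Finally, observe that $\tau^n(U_0)$ is the unknotted $2$-sphere for every $n$: the arc $k'$ associated to $U_0$ by \autoref{spindef} can be taken to be the straight arc along the rotation axis inside $B^3$, hence is fixed pointwise by every $r_{n,\theta}$, and $\tau^n(U_0)$ visibly bounds a $3$-ball in $S^4$ swept out by any disk in $B^3$ cobounded by $k'$ and an arc on $\partial B^3$. Together with the concatenated regular homotopy above, this yields $\ufw(\tau^n k) \leq u(k)$. The main obstacle in this argument, such as it is, is verifying that a single crossing change genuinely fits the hypothesis of \autoref{twistregionfm} (which is phrased in terms of ``full twists''); this is transparent from the local picture of the finger move in the proof of that lemma, but it is the one place where care is needed in interpreting the lemma's statement.
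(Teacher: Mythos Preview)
Your proposal is correct and follows essentially the same route as the paper: the paper's proof is the single sentence ``as there is a length one regular homotopy between $1$-knots related by a single crossing change, we obtain the following corollary,'' and you have simply unpacked this by writing down the unknotting sequence, invoking \autoref{twistregionfm} at each step, concatenating, and noting that $\tau^n(U_0)$ is unknotted. Your explicit remark that a crossing change is the $s\mapsto s\pm 1$ case of the full-twist insertion in \autoref{twistregionfm} is exactly the content of the paper's one-line deduction.
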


%%%%%%%%%% FIG %%%%%%%%%%
\fig{260}{pictures/fmspun}{
    \put(-445,160){\small{FM}}
    \put(-410,160){\small{$\alpha_{s+1} \subset B^3_{0}$}}
    \put(-337,160){\small{FM}}
    \put(-285,160){\small{FM}}
    \put(-252,160){\small{$\alpha'_s \subset B^3_{\pi}$; $\alpha_s \subset B^3_{0}$}}
    \put(-150,160){\small{FM}}
    \put(-101,160){\small{FM}}
    \put(-73,160){\small{$\alpha'_{s-1} \subset B^3_{\pi}$}}
    \put(-10,160){\small{FM}}
    \put(-340,-6){\small{$\Sigma_{s+1}~=~\Sigma'_s$}}
    \put(-135,-6){\small{$\Sigma_s~=~\Sigma'_{s-1}$}}
    \put(-225,60){$=$}
    \put(-445,60){$\cdots \; =$}
    \put(-25,60){$= \; \cdots$}
     \put(-456,140){$\cdots$}
    \put(0,140){$\cdots$}
    \caption{
        Top row: The intersections of the twist spins
        $\tau^n(k_{s+1})$, $\tau^n(k_s)$ and $\tau^n(k_{s-1})$
        with $B^3_0$ and $B^3_\pi$, as in \autoref{spindef}. Only the relevant crossing of each cross section is drawn.
        Bottom row: Schematics of the immersed spheres
        (compare to the embedded spheres in \autoref{spinpic})
        obtained by doing finger moves along the red
        guiding arcs in each diagram of the top row. Again, only the relevant crossing of the cross section in each page $B_\theta^3$ is shown. The isotopy between the two immersions $\Sigma'_s$
        and $\Sigma_s$ is via a rotation by $\pi$.
    }
    \label{fmspunpic}}
%%%%%%%%%%%%%%%%%%%%%%%%%%%

Although we are not aware of another instance of
\autoref{thm:lower_bound_classical_unknotting} in the literature,
the analogous result for $\ust$ was proved by Satoh \cite{satoh2004unknotting},
and also follows from Proposition $9$ of  \cite{baykur2016knottedsurfaces}.
Furthermore, Satoh proved in \cite{satoh2004unknotting}
that for any twist spin of a $b$-bridge knot,
the stabilization number is strictly less than $b$. 
When $b=2$, we prove that the same inequality holds for the
Casson-Whitney number of any twist spin. 

\begin{theorem}
    \label{thm:2bridgecw}
   If the twist spin $\tau^n k$ of a $2$-bridge knot $k$ is not unknotted, then it has $\ufw(\tau^n k)=1$. 
\end{theorem}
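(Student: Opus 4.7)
Since $\tau^n k$ is non-trivial, $\ufw(\tau^n k) \geq 1$, so the task is to show $\ufw(\tau^n k) \leq 1$. The plan is to apply \autoref{twistregionfm}, which says that if two $1$-knots $k$ and $k_s$ are related by inserting $s$ full twists into two parallel strands, then $\tau^n k$ and $\tau^n k_s$ are related by a length-one regular homotopy. Since the twist spin of the unknot is the unknotted $2$-sphere $U \subset S^4$, it suffices to find two parallel strands in $k$ and an integer $s$ such that inserting $s$ full twists produces the unknot in $S^3$.

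This reduces the theorem to showing that every 2-bridge knot has \emph{rational unknotting number one}: there is a disk $D \subset S^3$ meeting $k$ transversely in two points of opposite sign such that, for some $s \in \mathbb{Z}$, $1/s$-surgery on the unknotted curve $\partial D$ transforms $k$ into the unknot. Geometrically, the two intersection points of $k$ with $D$ give two parallel strands of $k$, and the surgery corresponds exactly to inserting $s$ full twists into these strands. For 2-bridge knots this property is a classical consequence of the Montesinos trick: the $2$-fold branched cover of $S^3$ over $k = b(p,q)$ is the lens space $L(p,q)$, which is obtained from $S^3$ by integer surgery on the lift of an unknotting tunnel; the tunnel itself descends to the required curve $\partial D$ in $S^3$. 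Combinatorially, one can also verify the claim by finding a continued fraction expansion $[a_1,\dots,a_n]$ of $p/q$ together with an index $i$ and integer $s$ for which $[a_1,\dots,a_i+2s,\dots,a_n]$ represents the unknot; the flexibility of (not necessarily positive) continued fraction expansions for $p/q$ provides enough freedom to always achieve this.

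Applying \autoref{twistregionfm} to the two parallel strands of $k$ cut out by $D$ and the integer $s$ yields a length-one regular homotopy from $\tau^n k$ to $\tau^n(\text{unknot}) = U$, giving $\ufw(\tau^n k) \leq 1$. The main obstacle I anticipate is verifying carefully the correspondence between the Dehn surgery description of rational unknotting and the ``insert $s$ full twists in two parallel strands'' operation of \autoref{twistregionfm}; this requires a careful choice of disk $D$ and surgery framing, but is essentially a standard translation.
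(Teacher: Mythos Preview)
Your approach differs substantially from the paper's and rests on a claim you have not established: that every $2$-bridge knot can be unknotted by a single insertion of $s$ full twists into one pair of strands. Neither of your sketched justifications proves this.

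The Montesinos-trick argument shows only that $b(p,q)$ can be unknotted by a single \emph{rational tangle replacement}: the core of a Heegaard solid torus of $L(p,q)$ does project to an unknotting tunnel, and some surgery on that core returns $S^3$, but the corresponding operation downstairs replaces the trivial tangle in a neighbourhood of the tunnel by another rational tangle determined by the surgery slope. There is no reason this replacement is the $1/s$-twist along a disk that \autoref{twistregionfm} requires.

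The continued-fraction sketch fares no better. A full twist in a Conway twist region changes the corresponding parameter by an even integer, so you would need an expansion $p/q=[a_1,\dots,a_n]$ and an index $i$ with $[a_1,\dots,a_i+2s,\dots,a_n]$ of numerator $\pm 1$. For $b(29,12)=[2,2,2,2]$ the numerator after shifting $a_i$ by $2s$ is $29+24s$ (for $i=1,4$) or $29+20s$ (for $i=2,3$), neither of which is ever $\pm 1$; the same obstruction persists for the other natural expansions of $29/12$ and $29/17$. Perhaps a non-Conway disk works, but you have not produced one, and ``flexibility of continued fractions'' is not an argument.

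The paper's proof sidesteps this issue entirely: it does \emph{not} unknot $k$ by one twist. Instead it performs a single finger move on $\tau^n k$ along a carefully placed arc and proves, by induction on the number of twist-region pairs, that the resulting immersed sphere $\Sigma_k$ is already the standard immersion $\Sigma$. The inductive step uses the equalities inside the proof of \autoref{twistregionfm} to identify $\Sigma_k$ with the immersed sphere coming from $\hat{k}$ (the $2$-bridge knot with the last pair $a_m,b_m$ deleted), together with an isotopy of the guiding arc that sets up the next step. Thus the chain $k\to\hat{k}\to\cdots\to\text{unknot}$ runs through many different twist regions, but the immersed sphere is constant throughout---a conclusion much weaker than ``one twist unknots $k$,'' and one that genuinely requires the inductive setup rather than a single invocation of \autoref{twistregionfm}.
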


\begin{proof}
    Since $k$ is $2$-bridge, it can be put into normal form \cite{conway1970enumeration}
    with non-zero twist parameters $a_1, b_1, \dots, a_m, b_m$ indicating the number of half twists in each region, as in \autoref{2bridgepic}. In fact, we may assume that the terms $a_i$ and $b_i$ are all even\footnote{It was pointed out in \cite{baaderetal2019} that this can be shown using the continued fraction notation for $2$-bridge knots, for instance see \cite{kawauchi1996}.}. Start by performing a finger move of $\tau^n k$ along the red guiding arc pictured in the leftmost diagram of \autoref{2bridgepic}, at some angle $\theta \in S^1$. This results in an immersed sphere $\Sigma_k$ that we will prove is the standard immersed sphere gotten by one finger move on the unknot, by induction on the number $m$ of twist region pairs $a_i, b_i$. 

%%%%%%%%%% FIG %%%%%%%%%%
\fig{90}{pictures/2bridge}{
  \put(-130,50){\small{Isotopy}}
  \put(-340,50){\small{FM}}
  \put(-255,50){\small{FM}}
    \put(-311,41){$\Sigma_k ~=~ \Sigma$}
    \caption{The intersection of the twist spins $\tau^n k$ (left)
    and $\tau^n \widehat k$ (middle and right) with the $3$-ball $B^3_\theta$ from \autoref{spindef},
    for each $\theta \in S^1$.}
    \label{2bridgepic}}
%%%%%%%%%%%%%%%%%%%%%%%%%%%

    When $m=0$, the knot $k$ and hence also its $n$-twist spin $\tau^n k$ are unknotted.  Therefore, $\Sigma_k$ is the standard immersion by definition. So, suppose $m\geq1$. Let $\widehat k$ denote the $2$-bridge knot with two fewer twist parameters
    $a_1, b_1, \dots, a_{m-1}, b_{m-1}$ and assume as the inductive hypothesis that the immersed sphere gotten by a finger move of the twist spin $\tau^n \widehat k$ along the red guiding arc pictured in the rightmost diagram of \autoref{2bridgepic} is ambiently isotopic to the standard immersion $\Sigma$. Observe that the guiding arc for this finger move is isotopic to the red guiding arc shown in the middle diagram of \autoref{2bridgepic}; therefore doing a finger move of $\tau^n \widehat k$ along this arc also gives the standard immersion $\Sigma$. 
    
    Now, since the knots $k$ and $\widehat k$ differ only along a single twist region, by \autoref{twistregionfm}, the twist spins $\tau^n k$ and $\tau^n \widehat k$ must give ambiently isotopic immersions after one finger move. Indeed, the guiding arcs for the finger moves of $\tau^n k$ and $\tau^n \widehat k$ that are used in the proof of \autoref{twistregionfm}
    (i.e.\ those from \autoref{fmspunpic})
    are equal to the red guiding arcs from the middle and leftmost diagrams
    of \autoref{2bridgepic}.
    It follows that $\Sigma_k$ is ambiently isotopic
    to the standard immersion $\Sigma$, as desired.
\end{proof}

\begin{remark}
    The quantities in the inequality \autoref{thm:lower_bound_classical_unknotting} can be arbitrarily far apart. For instance, each nontrivial $(2,p)$ torus knot $k_p$ has unknotting number $|p-1|/2$ by \cite{kronheimer_mrowka_1993} and bridge number equal to two. Therefore, by \autoref{thm:2bridgecw}, $\ufw(\tau^n k_p)= 1 < u(k_p)$ for any nontrivial twist spin of $k_p$, whenever $p\geq 5$.  
\end{remark}

\begin{remark}
    Satoh's proof \cite{satoh2004unknotting} that $\ust(K) \leq b-1$ for any twist spin of a $b$-bridge knot relies on the fact that $1$-handles can be slid over one another when $b>2$. However, this cannot be done with finger moves, making our proof of \autoref{thm:2bridgecw} difficult to extend to knots with higher bridge number.
\end{remark}

% % % % % % % % % % % % % % % %
\section{Algebraic lower bounds}
\label{sec:algebraic_lower_bounds}
% % % % % % % % % % % % % % % %

In this section, we discuss the {\it algebraic Casson-Whitney number} $\afw(K)$ of a $2$-knot $K$, the minimal number of meridian-commuting relations which abelianize the knot group of $K$ (see \autoref{def:algcw} for the precise definition). This algebraic invariant is the sharpest lower bound we are aware of for the Casson-Whitney number $\ufw$, and in \autoref{sec:classical_unknotting} we show that it is also a lower bound for the classical unknotting number. It is clear that $\asta(K)\leq\afw(K)$, as stabilization relations identify two meridians, while finger move relations merely force them to commute (see \autoref{subsec:groupcalcs} for a thorough description of the effects of the corresponding geometric operations on the knot group).
This subtle difference is used to prove \autoref{thm:3},
in which we give $2$-knots for which $\asta(K)<\afw(K)$
and for which this difference is realized geometrically. 

% % % % % % % % % % % % % % % %
\subsection{Previously known results}
\label{sec:previousalg}
% % % % % % % % % % % % % % % %

The minimal number of generators of the Alexander module, called the \bit{Nakanishi index $m(K)$}, is a classical lower bound for the unknotting number of 1-knots \cite{nakanishi1981unknotting}. In \cite{miyazaki1986relationship}, \cite{miller2019stabilization} it is shown that the Nakanishi index is also a lower bound for the stabilization number $\ust(K)$ of 2-knots. Indeed, any set of relators which abelianize the group of a knot must normally generate its commutator subgroup.
It is an exercise (cf.\ \cite[Exercise 7.D.5]{rolfsen}) to show
that the images of these relators generate the Alexander module.

A subtler but sharper bound for the classical unknotting number
is the \bit{Ma-Qiu index $a(K)$},
defined as the minimal number of relations
needed to abelianize the knot group \cite{maqiu}. In fact, the algebraic stabilization number $\asta(K)$ (\autoref{def:algstab}), the minimal number of stabilization relations needed to abelianize the knot group, is also a lower bound for the classical unknotting number. This is evident from the proof of \cite{maqiu}, and also by combining the inequalities from \autoref{basic} and \autoref{thm:lower_bound_classical_unknotting}, applied to the spin of a 1-knot $k$: $\asta(k)=\asta(\tau^0 k)\leq \afw(\tau^0 k)\leq \ufw(\tau^0 k)\leq u(k)$.

As noted in \autoref{basic}, the algebraic stabilization number, $\asta(K)$, is a natural lower bound for the stabilization number $\ust(K)$. This is also studied in \cite{kanenobu1996weak}, where it is called the {\it weak unknotting number}. In this section, we investigate the {\it algebraic Casson-Whitney number}, which is a natural lower bound for the Casson-Whitney number. By \autoref{thm:lower_bound_classical_unknotting}, it also provides a lower bound for the classical unknotting number via spinning, which as we show in \autoref{thm:algadd} is sharper than the bounds provided by $\asta(K)$ and $\ust(K)$. 

We summarize the previously known results regarding these invariants
in the proposition below. 

\begin{proposition}[Kanenobu, Ma-Qiu, Miyazaki, Nakanishi]
    \label{previousinequalities}
    If $k$ is a 1-knot, then
    \[
        m(k) \leq a(k) \leq \asta(k) \leq u(k).
    \]    
    If $K$ is a 2-knot, then
    \[
        m(K)\leq a(K)\leq \asta(K) \leq \ust(K).
    \]
\end{proposition}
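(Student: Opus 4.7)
The plan is to prove the chain of inequalities one link at a time, noting that the first two inequalities $m \le a \le \asta$ are purely algebraic and run identically for $1$-knots and $2$-knots; only the final step requires geometric input via \autoref{lem:pi_1_handle}.

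For $m(K) \le a(K)$, I would use that the Alexander module $A(K) \cong (\pi K)'/(\pi K)''$ is a module over $\Lambda = \Z[t^{\pm 1}]$ with $t$ acting by conjugation by a meridian. If $w_1, \dots, w_n$ normally generate $(\pi K)'$ in $\pi K$, then every element of $(\pi K)'$ is a product of $\pi K$-conjugates of the $w_i^{\pm 1}$; reducing modulo $(\pi K)''$ kills the action of $(\pi K)'$ by conjugation, so the action factors through $\pi K/(\pi K)' \cong \Z$, and the images $\bar w_i$ generate $A(K)$ as a $\Lambda$-module. Hence $m(K) \le n = a(K)$.

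For $a(K) \le \asta(K)$, let $\asta(K) = n$ and pick meridian-identifying relations $x_i = y_i$ that abelianize $\pi K$. Each $x_i y_i^{-1}$ lies in $(\pi K)'$, since any two meridians share the same image in the abelianization. The normal closure $\langle\langle x_i y_i^{-1}\rangle\rangle$ is contained in $(\pi K)'$, and by hypothesis the quotient by this closure is abelian, so the closure must in fact equal $(\pi K)'$, giving $a(K) \le n$.

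For the geometric inequalities, the key input is that every stabilization (resp.\ classical crossing change) introduces one relation identifying a pair of meridians of the knot. In the 2-knot case, \autoref{lem:pi_1_handle} furnishes the relation $g^{-1} a g = b$ with $a, b$ meridians of the connected surface, and $g^{-1} a g$ is itself a meridian, producing the desired relation. If $\ust(K) = n$ stabilizations unknot $K$, the resulting unknotted genus-$n$ surface has complement fundamental group $\Z$; this follows inductively from the unknotted $S^2$ case, since a trivial stabilization has guiding arc representing the identity in $\Z$ and hence contributes the trivial relation by \autoref{lem:pi_1_handle}. Thus $n$ meridian-identifying relations abelianize $\pi K$, so $\asta(K) \le n$. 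For 1-knots, the analogous input is that a single crossing change identifies the two meridians meeting at the crossing, which is visible from comparing Wirtinger presentations; so $u(k)$ crossing changes unknotting $k$ give $u(k)$ such relations taking $\pi k$ to $\Z$.

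The only real subtlety is the identification, in the 2-knot argument, of the meridians appearing in the stabilization relation at each intermediate stabilized surface with meridians of the original $K$. This is automatic: each intermediate surface is connected, so all of its meridians lie in a single conjugacy class of $\pi K$, namely that of a meridian of $K$ itself. Once this is granted, the remaining steps amount to unpacking definitions and applying \autoref{lem:pi_1_handle}.
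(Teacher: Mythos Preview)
The paper does not actually supply a proof of this proposition: it is presented as a summary of previously known results, attributed to Kanenobu, Ma--Qiu, Miyazaki, and Nakanishi, with the only in-text justification being the remark that ``the proof in \cite{maqiu} shows that $\asta$ is a lower bound for the classical unknotting number as well, since the relations they obtain identify meridians,'' together with the implicit appeal to \autoref{lem:pi_1_handle} underlying \autoref{basic}.

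Your argument is correct and fills in exactly the details the paper omits. The reductions $m\le a$ and $a\le \asta$ are the standard ones (passing to the abelianized commutator subgroup for the first, and observing that $x y^{-1}\in(\pi K)'$ whenever $x,y$ are meridians for the second). For the geometric step you invoke \autoref{lem:pi_1_handle} just as the paper intends, and your handling of the one genuine subtlety---that the meridians appearing in the successive stabilization relations lift to honest meridians of the original $K$---is right: a meridian of any intermediate stabilized surface, taken at a point away from the tubes, is literally a meridian of $K$, and connectedness makes all meridians conjugate. An equivalent and slightly cleaner route is to apply \autoref{lem:pi_1_handle} once with all $\ust(K)$ guiding arcs taken disjoint (possible by general position in dimension four), so that the relations are expressed directly in $\pi K$ from the outset; but your iterative version is fine. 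The $1$-knot case via Wirtinger relations is likewise the intended argument.
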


As pointed out in \cite{maqiu}, the first inequality above is often strict: the Ma-Qiu index is positive whenever $\pi K$ is not abelian, but the Alexander module and hence the Nakanishi index can be zero for nontrivial knots, e.g.\ Alexander polynomial one 1-knots. While $m(k),a(k)$, and $\asta(k)$ are known to be nonadditive on certain classical knots (see the end of \autoref{sec:algmain}), we are unaware of any classical knots for which $\afw$ is nonadditive. We show in \autoref{sec:nonadd} that is nonadditive on certain 2-knots. 

% % % % % % % % % % % % % % % %
\subsection{The algebraic Casson-Whitney number}
\label{sec:algmain}
% % % % % % % % % % % % % % % %

Recall from \autoref{subsec:groupcalcs} that each finger move on a 2-knot $K$ adds a relation of the form $[x,y]=1$, where $x, y$ are meridians of $K$. As noted after \autoref{meridian}, $y$ is equal to $x^w$ for some $w\in (\pi K)^\prime$. Therefore, the algebraic Casson-Whitney number $\afw(K)$ is equal to the minimal number of elements $w_i \in (\pi K)'$ such that the relations $\{[x,x^{w_i}]=1\}$ abelianize $\pi K$. 

These finger move relations are `weaker' than the relations induced by stabilizations, in that every finger move relation is also a stabilization relation. Recall from \autoref{def:algstab} that $\asta(K)$ denotes the minimal number of stabilization relations needed to abelianize the knot group; these relations are of the form $x=y$, where $x$ and $y$ are meridians, or equivalently $[x,w]=1$, where $w\in(\pi K)^\prime$ and $y=x^w$. Thus $\asta(K)$ is the minimal number of elements $w_i \in (\pi K)^\prime$ such the relations $\{[x,w_i]=1\}$ abelianize $\pi K$. Although $x^w$ is not in the commutator subgroup,
$x^w=x[x,w]$, so the finger move relation
$[x,x^w]=1$ is equivalent to the stabilization relation
$[x,[x,w]]=1$,
and we see that $\asta(K)\leq\afw(K)$.

On the other hand, an obvious upper bound for $\afw(K)$ is $\mu(K)-1$, where $\mu(K)$ is the meridional rank of $K$: forcing any single meridian to commute with the rest of a generating set of meridians will force that meridian into the center of the group. Since all knot groups are normally generated by any meridian, this abelianizes the group. We summarize the relationships between these invariants below, which are defined for $n$-knots because we will later refer to the case $n=1$ as well as our usual case $n=2$ (although these invariants are well-defined for all $n\geq 1$ because they only depend on the knot group and the information of a meridian).

%Below, we compare $\asta$ and $\afw$ to similar previously established algebraic invariants of $n$-knots that are related to the classical unknotting number of $1$-knots. Refer to \autoref{tab:invariants} for a glossary of the algebraic invariants used throughout this section.  

%As noted at the end of \autoref{sec:background}, the algebraic Casson-Whitney number $\afw(K)$ gives a lower bound for $\ufw(K)$. On the other hand, an obvious upper bound for $\afw(K)$ is $\mu(K)-1$, where $\mu(K)$ is the meridional rank of $K$, i.e.\ the fewest number of meridians which generate the knot group of $K$. This holds since forcing a generating set of meridians to commute with a single fixed meridian forces that meridian into the center of the knot group. Since the knot group is normally generated by any meridian, this abelianizes the group. The first inequality below is proven in \cite{maqiu}, and the second is immediate, since stabilization relators are elements of the commutator subgroup of the knot group. This gives: 

\begin{proposition} \label{inequalities}
For any $n$-knot $K$, $$m(K)\leq a(K)\leq \asta(K)\leq \afw(K)\leq\mu(K)-1.$$
\end{proposition}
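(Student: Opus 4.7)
The proof is a chain of four inequalities, and I would tackle them in order using only elementary group theory together with the conjugacy structure of meridians and the commutator manipulations already previewed in the paragraph preceding the statement. The first, $m(K) \leq a(K)$, follows by projecting a normal generating set of $(\pi K)'$ of size $a(K)$ into the Alexander module $(\pi K)'/(\pi K)''$: under the projection, conjugation by a meridian becomes multiplication by $t$, so a normal generating set of $(\pi K)'$ descends to a $\mathbb{Z}[t,t^{-1}]$-module generating set of the Alexander module.

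For $a(K) \leq \asta(K)$, I would take an optimal family of stabilization relations $x_i = y_i$ between meridians and consider the elements $r_i = x_i y_i^{-1}$. Since meridians are mutually homologous, each $r_i$ lies in $(\pi K)'$, and their normal closure $N$ therefore sits inside $(\pi K)'$; conversely $(\pi K)' \subset N$ because $\pi K / N$ is abelian by hypothesis. Hence the $\asta(K)$ elements $r_i$ normally generate $(\pi K)'$. For the next step $\asta(K) \leq \afw(K)$, the identity $[x, x^w] = [x, [x, w]]$ noted in the discussion preceding the statement recasts each finger move relation as a stabilization relation in which $w$ has been replaced by $[x,w] \in (\pi K)'$, so any collection of $\afw(K)$ finger move relations abelianizing $\pi K$ yields the same number of stabilization relations doing the same.

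Finally, for $\afw(K) \leq \mu(K) - 1$, I would fix meridians $m_1, \dots, m_{\mu(K)}$ that generate $\pi K$ and impose the $\mu(K)-1$ finger move relations $[m_1, m_i] = 1$ for $i \geq 2$. In the quotient $G$ the image of $m_1$ is central, but $m_1$, being a meridian, also normally generates $G$; the normal closure of a central element is the cyclic subgroup it generates, so $G$ is cyclic and in particular abelian. No step presents a genuine obstacle; the only place meriting care is the second inequality, where one has to check both inclusions in order to identify the normal closure of the $r_i$ with all of $(\pi K)'$.
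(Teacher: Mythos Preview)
Your proposal is correct and matches the paper's approach closely. The paper does not give a formal proof block for this proposition: the inequalities $m(K)\leq a(K)\leq \asta(K)$ are attributed to prior work (Kanenobu, Ma--Qiu, Miyazaki, Nakanishi) in the preceding proposition, while the two new inequalities are explained in the paragraphs immediately before the statement using exactly the identity $x^w = x[x,w]$ (hence $[x,x^w]=[x,[x,w]]$) and the centrality argument you give. The only difference is that you supply explicit self-contained arguments for $m(K)\leq a(K)$ and $a(K)\leq\asta(K)$ rather than citing the literature; these arguments are standard and correct.
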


In \autoref{thm:algadd} we show that the inequality $\asta(K)\leq\afw(K)$ can be strict. In fact, we find infinitely many 2-knots $K$ with $\asta(K)=\ust(K)=1$ and $\afw(K)=2$, enabling us to prove in \autoref{thm:3} that $\ust(K)<\ufw(K)$ for infinitely many 2-knots $K$. The last inequality may also be strict, for the same reason pointed out after \autoref{rem:doublystd}.

\begin{proposition} \label{max}
For $\alpha\in\{a,\asta,\afw\}$ and for $n$-knots $K_1$ and $K_2$,
\[
    \max\{\alpha(K_1),\alpha(K_2)\}
    \leq
    \alpha(K_1\cs K_2)\leq \alpha(K_1)+\alpha(K_2).
\]
\end{proposition}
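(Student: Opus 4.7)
The strategy is to work with the amalgamated free product description $\pi(K_1 \# K_2) \cong \pi K_1 *_{\mathbb{Z}} \pi K_2$, where the $\mathbb{Z}$ is the subgroup generated by a meridian in each factor. Since all three invariants are defined in terms of relations of a specific meridian type, the uniform key fact is that meridians of $K_i$ (in $\pi K_i$) remain meridians of $K_1 \# K_2$ under the inclusion, and conversely every meridian of $K_1 \# K_2$ is conjugate to the amalgamating meridian. This lets me treat $a$, $\asta$, and $\afw$ simultaneously, since what matters for each is preservation under homomorphisms that send meridians to meridians.

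For the upper bound, given minimal abelianizing relation sets $R_i$ for $\pi K_i$, I would consider $R_1 \cup R_2$ as relations in $\pi(K_1 \# K_2)$. By the meridian observation, each relation retains its prescribed type ($x=y$ for $\asta$, $[x,y]=1$ for $\afw$, arbitrary for $a$) after inclusion. A standard amalgamated-quotient calculation then gives
\[
    \pi(K_1 \# K_2)/\langle\langle R_1 \cup R_2 \rangle\rangle
    \;\cong\;
    (\pi K_1/\langle\langle R_1\rangle\rangle) *_{\mathbb{Z}} (\pi K_2/\langle\langle R_2\rangle\rangle)
    \;\cong\;
    \mathbb{Z} *_{\mathbb{Z}} \mathbb{Z}
    \;\cong\;
    \mathbb{Z},
\]
so $R_1 \cup R_2$ abelianizes $\pi(K_1\#K_2)$, giving $\alpha(K_1 \# K_2) \le \alpha(K_1) + \alpha(K_2)$.

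For the lower bound, I would build a retraction $r \colon \pi(K_1 \# K_2) \twoheadrightarrow \pi K_1$ via the identity on $\pi K_1$ and the composite $\pi K_2 \twoheadrightarrow \mathbb{Z} \hookrightarrow \pi K_1$ (sending meridians of $K_2$ to the amalgamating meridian) on $\pi K_2$; this is well-defined on the amalgamated product since both maps agree on the common $\mathbb{Z}$. Because meridians of $K_1\#K_2$ form a single conjugacy class and $r$ is a homomorphism, $r$ sends every meridian to a meridian of $K_1$. Hence, given any relation set $R$ of the relevant type that realizes $\alpha(K_1 \# K_2)$, the image $r(R)$ is a set of the \emph{same} type in $\pi K_1$ with $|r(R)| \le |R|$. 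The retraction descends to a surjection $\mathbb{Z} \cong \pi(K_1 \# K_2)/\langle\langle R\rangle\rangle \twoheadrightarrow \pi K_1/\langle\langle r(R)\rangle\rangle$; the target also surjects onto its own abelianization $\mathbb{Z}$, so it must equal $\mathbb{Z}$ and $r(R)$ abelianizes $\pi K_1$. Thus $\alpha(K_1) \le \alpha(K_1 \# K_2)$, and the symmetric retraction onto $\pi K_2$ handles $K_2$.

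The only delicate point is checking that meridians are preserved under the retraction, which follows from the conjugacy class description of meridians combined with the fact that $r$ fixes one meridian of $K_1$. Once that is in place, both inequalities reduce to standard manipulations of amalgamated presentations, and no separate argument is required for the three choices of $\alpha$.
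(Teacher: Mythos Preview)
Your proof is correct and follows essentially the same approach as the paper's: the paper only writes out the lower bound, using precisely your retraction $\phi=r\colon \pi(K_1\#K_2)\twoheadrightarrow \pi K_1$ that sends meridians of $K_2$ to a fixed meridian of $K_1$, and then observes that the pushed-forward relators are of the required form and abelianize $\pi K_1$. You supply the (elementary) upper bound, which the paper omits, and are more explicit about why $\pi K_1/\langle\langle r(R)\rangle\rangle\cong\mathbb{Z}$ via the sandwich between $\mathbb{Z}$'s, but otherwise the arguments coincide.
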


\begin{proof}
    The proof is the same in all three cases;
    we follow Kanenobu in \cite{kanenobu1996weak} for $\alpha=\asta$. Let $g_1,\dots,g_n$ be a minimal set of relators of the required form (depending on $\alpha$) which abelianize $\pi(K_1\cs K_2)$.
    Let $\phi$ be the surjection
    $\phi \colon \pi(K_1\cs K_2) \twoheadrightarrow \pi K_1$
    which sends all meridians of $K_2$ to the meridian of amalgamation.
    Notice that $\pi K_1/\langle\langle \phi(g_1), \dots, \phi(g_n)\rangle\rangle\cong \mathbb{Z}$ and that each $\phi(g_i)$ is a relator of the required form for computing $\alpha(K_1)$. Therefore, $\alpha(K_1\cs K_2)\geq \alpha(K_1)$. Repeating the argument for $K_2$ obtains the first inequality.
    
    The second is obtained by imposing relations on the group of $K_1 \cs K_2$
    which abelianize $K_1$ and $K_2$ separately.
    Since $\pi(K_1\cs K_2) \cong 
    {(\pi K_1 * \pi K_2)} / {\langle\langle x_1^{-1}x_2\rangle\rangle}$,
    where $x_i$ are meridians of $K_i$,
    these relations abelianize the group of the connected sum.
\end{proof}

As a first application of \autoref{inequalities},
we show that any natural number can occur as the Casson-Whitney number of a 2-knot. We will make use of determinants in the following proposition and in \autoref{thm:algadd}, which we introduce now.

The Alexander module of a 2-knot is the first homology of the infinite cyclic cover, viewed as a $\mathbb{Z}[t^{\pm 1}]$-module. The \bit{determinant} of a 2-knot $K$ is defined in \cite{joseph20190concordance} as the positive generator of the evaluation of the Alexander ideal at $t=-1$, i.e.\ $\Delta (K)|_{-1}:=n$, where $n>0$ is the generator of the principal ideal $\{f(-1):f(t)\in\Delta(K)\}\subseteq\mathbb{Z}$. Equivalently, it is the order of the $\mathbb{Z}$-module induced by setting $t=-1$ in the Alexander module. As with classical knots this is always an odd integer,
and in \cite[Proposition 5.9]{joseph20190concordance} it is shown that even twist-spinning preserves the determinant, while odd twist-spins always have determinant 1. The classical fact that a 1-knot $k$ admits a Fox $p$-coloring for prime $p$ if and only if $p$ divides the classical determinant $|\Delta_k(-1)|$, where $\Delta_k(t)$ is the Alexander polynomial of $k$, carries over without change to this definition of determinant for nonprincipal ideals. A \bit{Fox $p$-coloring} of a 2-knot is a surjection from its knot group onto the dihedral group $D_p\cong \mathbb{Z}_{p}\rtimes \mathbb{Z}_2$, which sends meridians of the 2-knot to reflections. 

\begin{proposition}
    Let $n \in \mathbb{N}$.
    Then there exists a 2-knot $K$ with $\ufw(K)=n$.
\end{proposition}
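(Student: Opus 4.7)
The plan is to exhibit an explicit family $\{K_n\}_{n\geq 0}$ of 2-knots with $\ufw(K_n) = n$: take $K_{0}$ to be the unknot, and for $n \geq 1$ set $K_n = \bigconnsum_{i=1}^{n} \tau^0(T_{2,3})$, the $n$-fold connected sum of the spun trefoil.

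The upper bound $\ufw(K_n) \leq n$ splits into two parts. First, since $T_{2,3}$ is a 2-bridge knot, \autoref{thm:2bridgecw} gives $\ufw(\tau^0(T_{2,3})) = 1$. Second, $\ufw$ is subadditive under connected sum: performing the length-one regular homotopies from each summand to an unknot in disjoint balls, concatenated with an isotopy collapsing the resulting unknot summands, gives a regular homotopy from $K_n$ to the unknot of total length $n$.

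For the matching lower bound $\ufw(K_n) \geq n$, the plan is to combine the algebraic bound $\afw(K_n) \leq \ufw(K_n)$ from \autoref{basic} with Fox 3-colorings. Let $V(J)$ denote the $\F{3}$-vector space of Fox 3-colorings of a 2-knot $J$ (i.e.\ homomorphisms $\pi J \to D_3$ sending meridians to reflections) modulo the 1-dimensional subspace of constant colorings. By Artin's theorem, $\pi(\tau^0 k) \cong \pi k$ with meridians preserved, so $V(\tau^0 T_{2,3}) \cong V(T_{2,3}) \cong \F{3}$, and $V$ is additive under connected sum, giving $\dim V(K_n) = n$. A key computation is that in $D_3$ two reflections commute if and only if they are equal; hence each finger move relation $[x,y] = 1$ with $x, y$ meridians imposes a single linear equation $x = y$ on $V$. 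Since any 2-knot with group $\Z$ has $V = 0$, reducing $\pi K_n$ to $\Z$ via finger moves requires at least $\dim V(K_n) = n$ of them, giving $\afw(K_n) \geq n$.

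Taken together these bounds yield $n \leq \afw(K_n) \leq \ufw(K_n) \leq n$. The main anticipated obstacle is checking the additivity of $V$ under connected sum rigorously; this reduces to the van Kampen description of $\pi(K_1 \connsum K_2)$ as an amalgamated product over the infinite cyclic subgroup generated by a shared meridian, after which the coloring fibered product formula, and hence the additivity of $\dim V$, follows directly. The commutation calculation in $D_3$ and the identification $V(\tau^0 k) \cong V(k)$ via Artin's theorem are both immediate.
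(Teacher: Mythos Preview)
Your proof is correct and follows the same overall strategy as the paper: take $K = \#^n J$ for a building block $J$ with $\ufw(J)=1$ (your $J=\tau^0(T_{2,3})$ is an instance of the paper's ``even twist-spin of a 2-bridge knot''), obtain the upper bound by subadditivity, and the lower bound algebraically. The paper's lower bound uses the Nakanishi index: the Alexander module of $\#^n J$ surjects onto an $n$-dimensional vector space, so $m(K)=n$, and then $m(K)\leq\ufw(K)$ via \autoref{inequalities} and \autoref{basic}. You instead argue directly with Fox 3-colorings, using that in $D_3$ two reflections commute only if they are equal, so each finger move relation drops $\dim V$ by at most one. This is essentially the same computation---your space $V(K)$ is exactly the vector space onto which the Alexander module surjects when $t$ acts as $-1$---but your version is a little more self-contained, bypassing the quoted Nakanishi index bound in favor of a direct verification that $\afw(K_n)\geq n$.
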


\begin{proof}
Let $J$ be any 2-knot with $\ufw(J)=1$ and Nakanishi index $m(J)=1$, for instance $J$ could be any even twist-spin of a 2-bridge knot, by \autoref{thm:2bridgecw}: 2-bridge knots have nontrivial determinants, which are preserved by even twist-spinning \cite{joseph20190concordance}. Therefore the Alexander module of $J$ is nontrivial, so it must be cyclic since it is a quotient of the original 2-bridge knot's Alexander module.

Then letting $K=\cs^n J$ obtains the desired result: the Nakanishi index $m(K)=n$, since the Alexander module of $K$ is generated by $n$ elements and surjects onto a vector space of dimension $n$, so by \autoref{inequalities} $\ufw(K)\geq n$. Conversely, $K$ can be unknotted in $n$ pairs of finger and Whitney moves by performing the optimal length one regular homotopy for $J$ on each summand.
\end{proof}

Scharlemann proved that unknotting number one knots are prime, i.e.\ if $K_1$ and $K_2$ are nontrivial classical knots, then the unknotting number of $K_1\cs K_2$ is at least 2 \cite{scharlemann1985unknotting}. Here we prove a special case of the analogous statement for $\ufw$, which works whenever the 2-knots in question have nontrivial determinants, or equivalently whenever their knot groups admit nontrivial Fox colorings.
This reproves the same special case of Scharlemann's theorem for classical knots, via the bound given by \autoref{thm:lower_bound_classical_unknotting}. The technical core of our proof is a Freiheitssatz for one-relator quotients of free products of cyclic groups due to Fine, Howie, and Rosenberger \cite{fine1988freiheitssatz}.

\begin{theorem*}[Fine, Howie, Rosenberger]
    \label{thm:freiheitssatz}
    Suppose $G=\langle a_1,\dots,a_n \mid a_1^{e_1},\dots,a_n^{e_n},R^m\rangle$, where $n\geq 2$, $m\geq 2$, $e_i=0$ or $e_i\geq 2$ for all $i$, and $R(a_1,\dots,a_n)$ is a cyclically reduced word which involves all of $a_1,\dots,a_n$. Then the subgroup of $G$ generated by $a_1,\dots,a_{n-1}$ is isomorphic to 
    $\langle a_1,\dots,a_{n-1} \mid a_1^{e_1},\dots,a_{n-1}^{e_{n-1}}\rangle$.
\end{theorem*}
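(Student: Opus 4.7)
The plan is to adapt Magnus's classical Freiheitssatz strategy to the setting where the ambient group is a free product of cyclic groups rather than a free group, using the hypothesis $m \geq 2$ to supply the small-cancellation/asphericity input that replaces freeness. Set $H = \langle a_1,\dots,a_{n-1} \mid a_1^{e_1},\dots,a_{n-1}^{e_{n-1}} \rangle$ and let $\overline{H} = H * \langle a_n \mid a_n^{e_n} \rangle$, so that $G = \overline{H}/\langle\langle R^m \rangle\rangle$. The goal is to show that the natural composition $H \hookrightarrow \overline{H} \twoheadrightarrow G$ is injective.

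The first step is to argue asphericity of the relative presentation $\langle \overline{H} \mid R^m \rangle$. Assume towards contradiction that some $w \in H \setminus \{1\}$ becomes trivial in $G$; then $w$ labels the boundary of a reduced van Kampen diagram $D$ over this relative presentation, in which every $2$-cell is labeled by a cyclic conjugate of $R^{\pm m}$ and edges are labeled by elements of the vertex groups. Because $R$ is cyclically reduced and involves $a_n$, every $2$-cell boundary genuinely contains occurrences of $a_n$; since the boundary word $w$ of $D$ has no $a_n$ letters, every maximal $a_n$-labeled arc in $D$ must either close up in the interior or be absorbed by cancellation between adjacent $2$-cells. I would use a curvature argument in the style of Lyndon–Schupp to analyze these cancellations: the proper-power structure $m \geq 2$ forces any two adjacent $2$-cells that cancel a long piece of their boundaries to cancel along almost a full copy of $R$, and a Gauss–Bonnet/combinatorial counting argument on the dual graph of $a_n$-arcs then forces $D$ to contain a "spherical" subdiagram, contradicting reducedness.

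The second step is to conclude: once reduced nontrivial diagrams over $\langle \overline{H}\mid R^m\rangle$ with boundary in $H$ are ruled out, any relation in $G$ between elements of $H$ must already hold in $\overline{H}$, and hence in $H$ itself (since $H$ is a free factor of $\overline{H}$). This shows that the map $H \to G$ is injective, which is exactly the claim of the theorem. To present the subgroup exactly as $\langle a_1,\dots,a_{n-1} \mid a_1^{e_1},\dots,a_{n-1}^{e_{n-1}}\rangle$, I would note that the generators $a_1,\dots,a_{n-1}$ of $G$ clearly satisfy $a_i^{e_i} = 1$, so by injectivity these are the only relations among them.

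The main obstacle I expect is the curvature bookkeeping in the case where cyclic reduction of $R$ interacts with the torsion relators $a_i^{e_i}$: an $a_i$-subword of $R$ whose exponent is divisible by $e_i$ can be modified by a torsion relator, producing what looks like a cancellation between $2$-cells that is not excluded by merely requiring $R$ to be cyclically reduced. Isolating exactly how the hypotheses $m \geq 2$ and "$R$ involves all generators" prevent such pathologies (presumably by ensuring that any such modification creates at least three uncancelled corners per $2$-cell, enough to yield positive combinatorial curvature) is the technical heart of the argument and is precisely where the Fine–Howie–Rosenberger analysis sits.
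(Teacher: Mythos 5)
This statement is not proved in the paper at all: it is quoted verbatim as an external result of Fine, Howie, and Rosenberger, and the authors explicitly record that ``it is proved by finding explicit representations of these groups into $\operatorname{PSL}_2(\mathbb{C})$'' --- that is, by the theory of generalized triangle groups, not by diagrammatic small-cancellation methods. So your proposal is, by construction, a genuinely different route from the one the cited source (and hence the paper) relies on.

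Unfortunately the route you propose has a real gap, and it sits exactly at the step you defer. A Lyndon--Schupp curvature argument over the free product $\overline{H}=H*\langle a_n\mid a_n^{e_n}\rangle$ needs the relator to satisfy a small-cancellation or combinatorial curvature condition strong enough that reduced diagrams carry positive curvature somewhere on the boundary; for a proper power $R^m$ the pieces coming from self-overlaps of $R^m$ with its cyclic conjugates can be as long as nearly $(m-1)\abs{R}$ out of a boundary of length $m\abs{R}$, so the power hypothesis alone only yields the analogue of a $C(2m)$-type condition. That is enough for the classical picture/diagram proofs when $m\ge 4$ (Howie's Freiheitssatz for one-relator products of locally indicable groups, and its proper-power refinements), but for $m=2$ and $m=3$ the Gauss--Bonnet counting does not close up: adjacent $2$-cells may cancel along almost a full period of $R$ without forcing a spherical subdiagram, and the interaction with the torsion relators $a_i^{e_i}$ that you flag makes this worse, not better. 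This is precisely why Fine, Howie, and Rosenberger abandon the diagrammatic method for this statement and instead construct an explicit essential representation $G\to\operatorname{PSL}_2(\mathbb{C})$ that restricts to a faithful (or at least order-preserving) representation on the visible subgroup generated by $a_1,\dots,a_{n-1}$; injectivity of $H\to G$ is then read off from the representation rather than from nonexistence of diagrams. Your final reduction (``once reduced nontrivial diagrams are ruled out, the relation already holds in $\overline{H}$'') is fine as stated, but the hypothesis it depends on is the unproved --- and, for $m=2,3$, not provable by these means without substantial new input --- core of the theorem.
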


Their result generalizes the more well-known Freiheitssatz
for one-relator groups,
a classical result in combinatorial group theory characterizing
the torsion in a one-relator group.
It is proved by finding explicit representations of these groups into
$\operatorname{PSL}_2(\mathbb{C})$.

% % % % % % % %
% Restating theorem 1.5 from the introduction
\algadd*
% % % % % % % %

\begin{proof}
    Let $x_1$ and $x_2$ be meridians of $K_1$ and $K_2$ respectively, and form the connected sum so that $x_1$ and $x_2$ become identified in the group of $K_1\cs K_2$, i.e.\
    \[
        \pi(K_1\cs K_2)\cong\frac{\pi K_1 * \pi K_2}{\langle\langle x_1^{-1}x_2\rangle\rangle}.
    \]
    Denote $x$ as the image of these meridians.
    
    The claim to be proved is that for any $w\in \pi(K_1\cs K_2)^\prime$, the relation $[x,x^w]=1$ does not abelianize $\pi(K_1\cs K_2)$, since then $\ufw(K_1\cs K_2)\geq \afw(K_1\cs K_2)\geq2$.
    
    Let $p_1$ and $p_2$ be prime divisors of $\Delta(K_1)|_{-1}$ and $\Delta(K_2)|_{-1}$, respectively.
    Then $K_i$ admits a Fox $p_i$-coloring
    \[
        \phi_i \colon \pi K_i\twoheadrightarrow D_{p_i}
        \cong \mathbb{Z}_{p_i} \rtimes \mathbb{Z}_2
        =\langle z_i,a_i \mid z_i^2=a_i^{p_i}=1,z a_i z=a_i^{-1}\rangle 
    \]
    with $x_i$ mapping to $z_i$, the generator of $\mathbb{Z}_2$.
    The group of the connected sum naturally surjects onto the group
    \begin{align*}
        G & \coloneqq
        \langle z,a_1,a_2
        \mid
        z^2=a_1^{p_1}=a_2^{p_2}=1,z a_1 z=a_1^{-1},z a_2z=a_2^{-1}
        \rangle \\
        & \cong
        (\mathbb{Z}_{p_1}*\mathbb{Z}_{p_2})\rtimes \mathbb{Z}_2,
    \end{align*}
    by mimicking the connected sum operation on the dihedral groups.
    To be explicit, first define
    $\phi_1 * \phi_2 \colon \pi K_1 *\pi K_2\rightarrow D_{p_1}*D_{p_2}$
    in the obvious way. Then $G$ can be constructed from $D_{p_1}*D_{p_2}$ by identifying the images of the meridians:
    \[
        G \cong 
        \frac{D_{p_1}*D_{p_2}}
            {\langle\langle
            z_1^{-1}z_2
            \rangle\rangle}
    \]
    We will show that $ G / \langle\langle\phi([x,x^w])\rangle\rangle $ is not abelian, hence $\pi(K_1\cs K_2)/\langle\langle[x,x^w]\rangle\rangle$ is not abelian either.
    
    Note that $\phi(x)=z$ and $\phi([x,x^w])=[z,z^v]$, where $v=\phi(w)$ is in the commutator subgroup $\mathbb{Z}_{p_1}*\mathbb{Z}_{p_2}$ of $G$. Then $G/\langle\langle[z,z^v]\rangle\rangle$ is the image of the induced homomorphism which we would like to show is nonabelian. We will do this by showing that its commutator subgroup is nontrivial. Let $N=\langle\langle[z,z^v]\rangle\rangle$, the normal closure of $[z,z^v]$ in $G$. As $[z,z^v]$ is a commutator, $N$ is contained in the commutator subgroup $\mathbb{Z}_{p_1}*\mathbb{Z}_{p_2}$ of $G$. The goal now is to show that $(\mathbb{Z}_{p_1}*\mathbb{Z}_{p_2})/N$ is not the trivial group.
    
    %For a general element $u\in\mathbb{Z}_{p_1}*\mathbb{Z}_{p_2}$, denote $\overline{u}=z^{-1}uz$, i.e.\ $(u\mapsto\overline{u})\in Aut(\mathbb{Z}_{p_1}*\mathbb{Z}_{p_2})$ is the automorphism defining the semidirect product structure of $(\mathbb{Z}_{p_1}*\mathbb{Z}_{p_2})\rtimes \mathbb{Z}_2$. Note that $[z,z^v]=z^{-1}(v^{-1}z^{-1}v)z(v^{-1}zv)=z^{-2}\overline{v}^{-1}vz^2\overline{v}^{-1}v=(\overline{v}^{-1}v)^2$. 
    
    Note that $[z,z^v]=z (v^{-1}zv) z (v^{-1}zv) = (zv^{-1}zv)^2=[z,v]^2$. It will be convenient to describe $N$ as the normal closure inside of $\mathbb{Z}_{p_1}*\mathbb{Z}_{p_2}$ of some elements of $\mathbb{Z}_{p_1}*\mathbb{Z}_{p_2}$. Denote $g=[z,v]$. Now, $N$ is the normal subgroup generated by all elements of the form $h^{-1}g^2h$, where $h\in G$ is arbitrary. Any $h\in G$ can be written as $z^nc$, where $n=0$ or $1$ and $c\in \mathbb{Z}_{p_1}*\mathbb{Z}_{p_2}$. Then $h^{-1}g^2h=c^{-1}z^ng^2z^nc$. Since $c^{-1}g^2c$ is already in the normal closure of $g^2$ in $\mathbb{Z}_{p_1}*\mathbb{Z}_{p_2}$, it suffices to consider $n=1$, i.e.\ $h=zc$. Notice that $zg^2z=(zgz)^2=(z[z,v]z)^2=(v^{-1}zvz)^2=[v,z]^2=[z,v]^{-2}=(g^2)^{-1}$. Then $c^{-1}zg^2zc=c^{-1}g^{-2}c=(c^{-1}g^2c)^{-1}$, so in fact $N$ is the normal closure in $\mathbb{Z}_{p_1}*\mathbb{Z}_{p_2}$ of just $g^2$. By the Freiheitssatz, $(\mathbb{Z}_{p_1}*\mathbb{Z}_{p_2})/\langle\langle g^2\rangle\rangle$ is nontrivial for any element $g\in\mathbb{Z}_{p_1}*\mathbb{Z}_{p_2}$ (we may assume $g$ is cyclically reduced, since this does not change the isomorphism type of the quotient. If $g$ involves only one of the generators $a_1$ or $a_2$, then clearly the other factor survives in the quotient). 
\end{proof}

%\begin{lemma}
%If conjugation by a meridian is an automorphism of order 2, then the finger move relation $[x,x^w]=[x,w]^2$.
%\end{lemma}

%\begin{proof}
%Since conjugation by $x^2$ is the identity, $$[x,x^w]=x^{-1}w^{-1}x^{-1}wxw^{-1}xw=(x^{-1}w^{-1}x) (x^{-2}wx^2) (x^{-1}w^{-1}x) w = (x^{-1}w^{-1}xw)^2= [x,w]^2.$$
%\end{proof}

\begin{corollary}
    Let $k_1$ and $k_2$ be classical knots with determinants
    $|\Delta_{k_i}(-1)|\neq 1$.
    Then
    \begin{equation*}
        \ufw(\tau^n k_1 \cs \tau^m k_2)\geq 2    
    \end{equation*}
    for any even integers $n,m$. 
\end{corollary}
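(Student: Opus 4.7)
The plan is to reduce the corollary directly to Theorem \ref{thm:algadd} applied to the pair of $2$-knots $K_1 \coloneqq \tau^n k_1$ and $K_2 \coloneqq \tau^m k_2$. Theorem \ref{thm:algadd} requires only that each summand $K_i$ has determinant $\Delta(K_i)|_{-1} \neq 1$, and gives the conclusion $\ufw(K_1 \# K_2) \geq 2$, which is exactly what we want. So the entire task is to verify the determinant hypothesis for these particular $2$-knots.

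The key input is the statement invoked just before Theorem \ref{thm:algadd}: by \cite{joseph20190concordance}, Proposition 5.9, even twist-spinning preserves the determinant, in the sense that $\Delta(\tau^{2\ell} k)|_{-1} = |\Delta_k(-1)|$ for any classical knot $k$ and any integer $\ell$. Since $n$ and $m$ are assumed even, I would simply apply this fact with $k = k_1$ and $k = k_2$, concluding
\begin{equation*}
    \Delta(\tau^n k_1)|_{-1} = |\Delta_{k_1}(-1)| \neq 1
    \quad\text{and}\quad
    \Delta(\tau^m k_2)|_{-1} = |\Delta_{k_2}(-1)| \neq 1,
\end{equation*}
using the hypothesis of the corollary.

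With the determinant hypothesis verified, Theorem \ref{thm:algadd} applies verbatim and gives $\ufw(\tau^n k_1 \# \tau^m k_2) \geq 2$, which is the desired inequality. There is no real obstacle here: the corollary is simply the specialization of the general algebraic non-primeness statement to twist spins whose determinant behavior under the spinning operation has already been recorded in the literature. The only thing worth being careful about is the parity assumption on $n$ and $m$, since odd twist-spins always have trivial determinant and so fall outside the scope of Theorem \ref{thm:algadd}; this is precisely why the hypothesis ``$n, m$ even'' appears in the statement.
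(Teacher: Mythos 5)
Your proposal is correct and follows exactly the route the paper intends: the corollary is stated without proof precisely because it is the specialization of Theorem \ref{thm:algadd} to $K_i = \tau^n k_1, \tau^m k_2$, using the cited fact from \cite{joseph20190concordance} that even twist-spinning preserves the determinant (the same reasoning the paper spells out for \autoref{addcor}). Your remark about why the parity hypothesis is needed matches the paper's observation that odd twist-spins always have determinant $1$.
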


\begin{corollary}
    \label{addcor}
    Let $K_1$ and $K_2$ be even twist-spins of 2-bridge knots.
    Then $\ufw(K_1\cs K_2)=2$. 
\end{corollary}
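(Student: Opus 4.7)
The plan is to deduce the equality as a direct sandwich between the upper bound coming from \autoref{thm:2bridgecw} and the lower bound coming from \autoref{thm:algadd}.

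First I would establish the upper bound $\ufw(K_1 \# K_2) \le 2$ by the subadditivity of $\ufw$ under connected sum. By \autoref{thm:2bridgecw}, each summand $K_i$ (a nontrivial even twist-spin of a 2-bridge knot) admits a length-one regular homotopy to the unknot $U$. Placing $K_1$ and $K_2$ in disjoint $4$-balls joined by a thin connected-sum tube and performing the two regular homotopies within these disjoint supports produces a length-two regular homotopy from $K_1 \# K_2$ to $U \# U$. Since $U \# U$ is smoothly isotopic to $U$ by the uniqueness of the unknotted $2$-sphere (\autoref{defunknot}), this yields $\ufw(K_1 \# K_2) \le 2$.

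For the lower bound I would apply \autoref{thm:algadd}, whose only hypothesis to check is that the determinants $\Delta(K_i)|_{-1}$ are nontrivial. Each nontrivial $2$-bridge knot $k_i$ has odd classical determinant strictly greater than $1$ (read directly from its continued fraction normal form, or equivalently from the order of its first homology branched double cover). The discussion preceding \autoref{thm:algadd} recalls Proposition $5.9$ of \cite{joseph20190concordance}, which asserts that even twist-spinning preserves the determinant. Hence $\Delta(K_i)|_{-1} = |\Delta_{k_i}(-1)| \ne 1$ for both $i = 1, 2$, and \autoref{thm:algadd} gives $\ufw(K_1 \# K_2) \ge 2$.

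There is essentially no major obstacle here: the corollary is a direct synthesis of the two results already proven in the paper. The only mildly nontrivial inputs are the classical computation of the determinant of a $2$-bridge knot and the invariance of the $2$-knot determinant under even twist-spinning, both already available in the literature and cited in the excerpt.
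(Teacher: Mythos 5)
Your proposal is correct and matches the paper's proof: the lower bound comes from \autoref{thm:algadd} after noting that even twist-spinning preserves the nontrivial determinant of a 2-bridge knot, and the upper bound comes from \autoref{thm:2bridgecw} together with subadditivity of $\ufw$ under connected sum. The extra detail you supply (the disjoint-supports justification of subadditivity and the classical determinant computation) only fleshes out steps the paper treats as elementary.
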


\begin{proof}
    Since 2-bridge knots have nontrivial determinants,
    their even twist spins do as well \cite{joseph20190concordance}.
    Then $\ufw(K_1\cs K_2)\geq2$ follows from \autoref{thm:algadd}.
    The reverse inequality follows from \autoref{thm:2bridgecw}
    and the elementary fact
    that $\ufw(K_1\cs K_2)\leq\ufw(K_1)+\ufw(K_2)$.
\end{proof}

It is interesting to note that in the case of 2-bridge knots $k_1,k_2$, the knot group $\pi(\tau^2 k_1 \cs \tau^2 k_2) \cong (\mathbb{Z}_{p_1}*\mathbb{Z}_{p_2})\rtimes \mathbb{Z}$, where $p_i=|\Delta_{k_i}(-1)|$, and that the proof of \autoref{thm:algadd} goes through in that setting without the further quotient to $G$. In fact,
$G$ arises naturally as the group of $\tau^n k_1 \cs \tau^m k_2 \cs \mathbb{R}P^2$,
where $n,m$ are even and $\mathbb{R}P^2$ denotes a standard projective plane.

For odd integers $p,q \in \mathbb{Z}$, let $K_{p,q}$ denote the spin of $T(2,p)\cs T(2,q)$. Miyazaki proved that $\ust(K_{p,q})=1$, whenever $q=p+2,p+4$, or $p+6$,
when $\gcd(p,p+6)=1$ \cite{miyazaki1986relationship}. 
Therefore, $\ust$ fails to be additive in these cases. However, it follows from \autoref{addcor} that $\ufw$ is additive in these cases, and in particular that $\ufw(K_{p,q})=2$. This proves \autoref{thm:3}.

% % % % % % % %
% Restating theorem 1.3 from the introduction
\nonequal*
% % % % % % % %

% % % % % % % % % % % % % % % %
\subsection{Application to classical unknotting number}
\label{sec:classical_unknotting}
% % % % % % % % % % % % % % % %

As noted at the start of \autoref{sec:previousalg}, the Nakanishi index, Ma-Qiu index, and algebraic stabilization number are all previously established lower bounds for the classical unknotting number. In this section we point out that the algebraic Casson-Whitney number is also a lower bound for the classical unknotting number, which is sharper than the aforementioned invariants in many cases.

Perhaps the most interesting reason to study $\afw$ as a lower bound for the unknotting number is that the above three invariants all fail to be additive in many simple cases, such as $T(2,p)\cs T(2,q)$ when $p,q$ are coprime \cite{kadoyang}. By \autoref{thm:algadd},
$\afw(T(2,p)\cs T(2,q))=2$ for all (odd) $p,q$. We do not know any cases where $\afw$ fails to be additive on classical knots, although it seems difficult to prove this is always the case. Still, this poses a potentially interesting avenue to study the classical unknotting number, via a lower bound which comes from four dimensional techniques.

Let $k$ be a 1-knot. Remembering that spinning
preserves the knot group (and its meridians),
$\afw(k)=\afw(\tau^0k)$. By \autoref{basic} $\afw(\tau^0k)\leq\ufw(\tau^0k)$, and by \autoref{thm:lower_bound_classical_unknotting}, $\ufw(\tau^0k)\leq u(k)$.
Putting these facts together, we have:

\begin{proposition}
    \label{classical_unknotting_number}
    For any 1-knot $k$, $\afw(k) \leq u(k)$.
\end{proposition}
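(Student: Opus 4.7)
The plan is to reduce the statement to facts already established in the paper by passing through the $0$-twist spin $\tau k$, which is a $2$-knot in $S^4$, and chaining three known inequalities.

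First, I would invoke Artin's classical theorem that spinning preserves the knot group together with its meridians: the natural isomorphism $\pi(\tau k) \cong \pi k$ carries meridians to meridians. Since the algebraic Casson-Whitney number $\afw$ depends only on the knot group together with a chosen meridian (as discussed after \autoref{def:algcw} and explicitly flagged before \autoref{inequalities}, where these invariants are defined for $n$-knots because they depend only on the knot group and a meridian), this immediately gives the equality
\[
\afw(k) = \afw(\tau k).
\]

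Second, since $\tau k$ is a $2$-knot, \autoref{basic} applies and yields $\afw(\tau k) \le \ufw(\tau k)$. Third, \autoref{thm:lower_bound_classical_unknotting} bounds the Casson-Whitney number of any twist spin by the classical unknotting number, giving $\ufw(\tau k) \le u(k)$ in the special case $n=0$. Concatenating the three steps produces
\[
\afw(k) = \afw(\tau k) \le \ufw(\tau k) \le u(k),
\]
which is the desired inequality.

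There is no real obstacle here: every ingredient has been set up earlier in the paper, and the proof is just a matter of stitching them together. The only conceptual point worth flagging is that $\afw$ is genuinely a group-theoretic invariant of the pair (knot group, meridian), not a four-dimensional one, so the equality in the first step is automatic once one recalls Artin's result; this is what allows four-dimensional upper bounds on $\ufw(\tau k)$ to translate into lower bounds on the classical unknotting number $u(k)$.
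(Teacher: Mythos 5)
Your proof is correct and is essentially identical to the paper's own argument: both pass to the spin $\tau k$, use that spinning preserves the knot group with its meridians to get $\afw(k)=\afw(\tau k)$, and then chain \autoref{basic} with \autoref{thm:lower_bound_classical_unknotting} to obtain $\afw(\tau k)\leq\ufw(\tau k)\leq u(k)$.
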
    

As noted in \autoref{sec:algmain}, this reproves a special case of Scharlemann's theorem that unknotting number one knots are prime \cite{scharlemann1985unknotting}. Namely, if $k_1$ and $k_2$ are classical knots with nontrivial determinants, then $u(k_1\cs k_2)\geq 2$.

% % % % % % % % % % % % % % % %
\section{Strong non-additivity of
\texorpdfstring{$\ust$ and $\ufw$}{Stabilization
and Casson-Whitney number}}
\label{sec:nonadd}
% % % % % % % % % % % % % % % %

As noted in \autoref{sec:algmain},
Miyazaki was the first to prove that $\ust$ is non-additive.
For certain $p,q$ (see section for precise description)
he showed that $\ust(\tau(T(2,p)\cs T(2,q)))=1$.
As pointed out by Kanenobu \cite{kanenobu1996weak},
the Nakanishi index proves that taking iterated connected sums of
$K=\tau(T(2,p)\cs T(2,q))$ has $\ust(\cs^{n} K)=n$,
while $\ust(\cs^{n} T(2,p))+\ust(\cs^{n} T(2,q)) = 2n$.
This shows the existence of 2-knots $K_1$, $K_2$
with $\ust(K_1)+\ust(K_2)-\ust(K_1\cs K_2)$ arbitrarily large.
In this section we investigate and prove a stronger version of
non-additivity for both the stabilization and Casson-Whitney number.
For notational convenience, throughout the section we use
$\alpha$ to denote either $\asta$ or $\afw$,
and $\nu$ to denote the corresponding $\ust$ or $\ufw$. 

Our geometric study of strong non-additivity is inspired by Kanenobu's work in \cite{kanenobu1996weak} establishing the non-additivity of $\asta$. In particular, for each $n\geq 1$, Kanenobu gave examples of $2$-knots $K_1,\dots,K_n$ with $\asta(K_i)=1$ and $\asta(K_1\cs\cdots\cs K_n) = 1.$

\begin{question}[Kanenobu]
    Is $\ust(K_1\cs\cdots\cs K_n) = 1$ as well?
\end{question}

We generalize Kanenobu's result for $\asta$ and prove a corresponding result for $\afw$. We then prove analogous results for the geometric versions $\ust$ and $\ufw$, answering Kanenobu's question in the affirmative at the expense of a small correction factor. In fact, \autoref{excor} shows that the connected sums $K_1\cs\cdots\cs K_n$ in Kanenobu's original examples have both stabilization number and Casson-Whitney number at most $2$. 

\begin{theorem}\label{thm:algnonadd}
    Let $\alpha \in \{ \asta, \afw\}$.
    Let $K_1,\dots,K_n$ be $2$-knots with $\alpha(K_i) \leq c$ for some $c \in \mathbb{N}$. Suppose that there exist meridians $x_i \in \pi K_i$ and relatively prime integers $j_i\in\mathbb{Z}$ such that each $x_i^{j_i}$ lies in the center $Z(\pi K_i)$ of the knot group of $K_i$.  Then, $\alpha(K_1\cs\cdots \cs K_n)\leq c$. 
\end{theorem}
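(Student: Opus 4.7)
The plan is to adapt Kanenobu's algebraic argument for $\asta$ (discussed in the paragraph just before the statement) so it treats $\alpha \in \{\asta, \afw\}$ in parallel. This uniform treatment is available because, as noted in \autoref{sec:algmain}, a finger-move relation $[x, x^{w}] = 1$ can be rewritten as $[x, [x,w]] = 1$ with $[x,w] \in (\pi K)'$; in both cases one is counting commutator-subgroup elements needed to force meridian-commutativity. To set up, I would fix for each $i$ representatives $w_{i,1}, \ldots, w_{i,c} \in (\pi K_i)'$ realizing $\alpha(K_i) \le c$, and write
\[
    G := \pi(K_1 \# \cdots \# K_n) \cong G_1 *_{\langle x \rangle} \cdots *_{\langle x \rangle} G_n
\]
as an iterated amalgamated free product over the common meridian $x$. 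The natural retractions $\phi_i \colon G \to G_i$ (sending each $G_j$ with $j \ne i$ to $\langle x \rangle \subset G_i$ via the abelianization $G_j \twoheadrightarrow \mathbb{Z}$) serve as the main bookkeeping tool.

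The construction then proceeds as follows. Using B\'ezout's identity $\sum_i a_i j_i = 1$ together with the centrality of $x^{j_i} \in Z(G_i)$, I would build $c$ elements $W_1, \ldots, W_c \in G$, each a carefully chosen product of the $w_{i,k}$ and appropriate powers of $x$, and take the $k$-th relator $R_k$ to be of the required type: $R_k = [x, x^{W_k}]$ for $\afw$, and the meridian identification $x = x^{W_k}$ for $\asta$. The design goal is that under each retraction $\phi_i$ the image $\phi_i(R_k)$ is an abelianizing relation for $G_i$, while at the same time the single relation $R_k$ in $G$ is strong enough --- via centrality of $x^{j_i}$ and the B\'ezout combination --- to induce all $n$ summand-level relations simultaneously in the quotient $Q := G / \langle\langle R_1,\ldots,R_c\rangle\rangle$.

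To finish, one argues that $Q \cong \mathbb{Z}$. Each retraction $\phi_i$ descends to a surjection $\bar\phi_i \colon Q \twoheadrightarrow G_i/\langle\langle \phi_i(R_k)\rangle\rangle \cong \mathbb{Z}$, and combining these with the fact that $x^{j_i}$ remains central and $\sum_i a_i j_i = 1$ in $Q$, one shows that the image of every $G_i$ in $Q$ lies in $\langle \bar x\rangle$; since $G$ is generated by the $G_i$, this collapses $Q$ to $\langle \bar x\rangle \cong \mathbb{Z} = G^{\mathrm{ab}}$. The main obstacle is the construction step: a bare relation $[x, \prod_i w_{i,k}] = 1$ does \emph{not} decouple into the $n$ separate relations $[x, w_{i,k}] = 1$ in an amalgamated free product, so the naive choice $W_k = \prod_i w_{i,k}$ fails even though it has the right retractions. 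Centrality of $x^{j_i}$ together with coprimality of the $j_i$ provides exactly the extra structure needed to make a refined choice of $W_k$ whose relation does decouple in the quotient, and verifying this while keeping each $R_k$ in the required meridian-identification or meridian-commutator form is where the technical core of the argument lies.
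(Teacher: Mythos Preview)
Your framework --- writing $G$ as an iterated amalgam over $\langle x\rangle$ and bookkeeping with the retractions $\phi_i$ --- is sound, but there is a genuine gap precisely where you flag ``the technical core''. Two points. First, your assertion that the naive choice $W_k=\prod_i w_{i,k}$ fails is incorrect: this is exactly the choice the paper makes, and it works. You are right that the retractions only produce surjections $Q\twoheadrightarrow\mathbb{Z}$, which do not by themselves show $Q$ is abelian; but the remedy is not a ``refined'' $W_k$ with inserted powers of $x$. The centrality and coprimality hypotheses enter in the \emph{analysis} of the naive relation, not in its construction, and you have not supplied that analysis.

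Here is the missing mechanism (for $\alpha=\asta$, $c=1$; the paper then indicates the adjustments for $\afw$ and $c>1$). Argue by downward induction on $m$ that in $Q$ the relation $[x,w_1\cdots w_m]=1$ already holds, starting from $m=n$. Choose $s,t$ with $sj_1+tj_2\cdots j_m=1$ by B\'ezout. Raise $x=(w_1\cdots w_m)^{-1}x(w_1\cdots w_m)$ to the power $sj_1$: since $x^{sj_1}\in Z(\pi K_1)$ it commutes past $w_1$, and since $x^{sj_1-1}=x^{-tj_2\cdots j_m}\in Z(\pi(K_2\#\cdots\# K_m))$ it commutes past $w_2\cdots w_m$; after simplification one obtains $[x,w_2\cdots w_m]=1$. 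Iterating peels off one factor at a time until $[x,w_m]=1$, which abelianizes $\pi K_m$; the original relation then descends to $[x,w_1\cdots w_{m-1}]=1$ on $\pi(K_1\#\cdots\# K_{m-1})$, closing the induction and giving $Q\cong\mathbb{Z}$. For $\afw$ the same power-raising trick is applied to $[x,x^{w_n\cdots w_1}]=1$, with a somewhat longer but entirely parallel computation. Your proposal never reaches this step, and without it the argument does not close.
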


\begin{proof}

We will prove the case $\alpha=\asta$ and $c=1$ in detail, then point out the changes necessary for the general result.

Since $\alpha(K_i)=1$, there exists an element $w_i\in (\pi K_i)^\prime$ such that $\pi K_i/\langle\langle[x_i,w_i]\rangle\rangle\cong\mathbb{Z}$. Let $K=K_1\cs\cdots\cs K_n$, and let $x=x_i$ be the meridian of amalgamation. We will show that
$\pi K/\langle\langle[x,w_1w_2\cdots w_n]\rangle\rangle\cong\mathbb{Z}$.
For $m\leq n$, let
\begin{align*}
    R_m & = [x,w_1w_2\cdots w_m] \text{ and } \\
    G_m & = \bigslant{ \pi(K_1\cs\cdots\cs K_m) }{ \langle\langle R_m\rangle\rangle }
\end{align*}
Note that $G_1\cong\mathbb{Z}$ by assumption;
we will show that $G_m\cong G_{m-1}$, so that by induction $G_n\cong\mathbb{Z}$. 

Since $j_1$ and $j_2j_3\cdots j_m$ are coprime, there exist integers $s$ and $t$ so that $sj_1+tj_2j_3\cdots j_m=1$. Notice that $x^{sj_1}\in Z(\pi K_1)$ and $x^{sj_1-1}=x^{-tj_2\cdots j_m}\in Z(\pi(K_2\cs\cdots\cs K_m))$. The relation $R_m$ is equivalent to $x=(w_1\cdots w_m)^{-1}x(w_1\cdots w_m)$. Raising both sides to the $sj_1$ we obtain:

\begin{align*}
    x^{sj_1} &= (w_2\cdots w_m)^{-1}w_1^{-1}x^{sj_1}w_1(w_2\cdots w_m)\\
             &= (w_2\cdots w_m)^{-1}x^{sj_1}(w_2\cdots w_m)\\
             &= (w_2\cdots w_m)^{-1}x(w_2\cdots w_m)x^{sj_1-1}
\end{align*}
which is equivalent to $x=(w_2\cdots w_m)^{-1}x(w_2\cdots w_m)$.
We can repeat this procedure until we reach $x=w_m^{-1}xw_m$, or $[x,w_m]=1$,
the relation which abelianizes $\pi K_m$.
Since $w_m$ is in the commutator subgroup of $\pi K_m$,
it is trivial in the abelianization, so the relation $[x, w_{m}] = 1$ abelianizes the subgroup of $\pi(K_1\cs\cdots\cs K_m)$ corresponding to $\pi K_m$, and the induced relation on $\pi(K_1\cs\cdots\cs K_{m-1})$ is $[x,w_1 w_2\cdots w_{m-1}]=1$:

\begin{align*}
    G_m &=
    \bigslant{ \pi(K_1\cs\cdots\cs K_m) }{ \langle\langle [x,w_1w_2\cdots w_m]\rangle\rangle } \\
    & \cong
    \bigslant{ \pi(K_1\cs\cdots\cs K_{m-1}) }{ \langle\langle [x,w_1w_2\cdots w_{m-1}]\rangle\rangle }
    = G_{m-1}.
\end{align*}

Now, if $c>1$, we simply repeat the previous argument $c$ times, making a choice to group the $nc$ assumed relations into $c$ relations, each one the combination of one of the assumed relations from each knot group, as above.

The proof for $\alpha=\afw$ is similar, so we only list the changes here. When $c=1$, each $\pi K_i$ has a finger move relation $[x_i,x_i^{w_i}]=1$ such that $\pi K_i/\langle\langle[x_i,x_i^{w_i}]\rangle\rangle\cong\mathbb{Z}$, for some $w_i\in(\pi K_i)^\prime$. We combine these into one relation: $[x,x^{w_nw_{n-1}\cdots w_1}]=1$, which will abelianize the group of $K_1\cs\cdots\cs K_n$. 

Let $R_m=[x,x^{w_m w_{m-1}\cdots w_1}]$ and let $v_i=w_m w_{m-1}\cdots w_i$,
so e.g.\ $v_1=v_2 w_1$,
and choose $s$ and $t$ as before. Let $G_m=\pi (K_1\cs\cdots \cs K_m)/\langle\langle R_m\rangle\rangle$. As before, we will show by induction that $G_n\cong G_1$, which is infinite cyclic by assumption. The relation which kills $R_m$, $[x,x^{v_1}]=1$,
is equivalent to $x=(x^{v_1})^{-1}xx^{v_1}$.
Raising both sides to the power $sj_1$, we obtain 
\begin{align*}
x^{sj_1} & =v_1^{-1}x^{-1}v_1 x^{sj_1}v_1^{-1}x v_1\\
&=v_1^{-1}x^{-1}v_2 w_1 x^{sj_1} w_1^{-1} v_2^{-1}x v_1\\
&=v_1^{-1}x^{-1}v_2 x^{sj_1} v_2^{-1} x v_1\\
&=v_1^{-1}x^{-1} v_2 x v_2^{-1} x v_2 x^{sj_1-1} w_1\\
&=w_1^{-1}v_2^{-1} x^{-1} v_2 x v_2^{-1} x v_2 x^{-1} w_1 x^{sj_1}
\end{align*}
After canceling the $x^{sj_1}$ terms from both sides, we can further cancel the $w_1$ terms to obtain
$x=v_2^{-1}x^{-1}v_2 x v_2^{-1} x v_2$, or $1=[x,x^{v_2}]$. Repeating this procedure we eventually reach $1=[x,x^{v_m}]=[x,x^{w_m}]$, the relation which abelianizes $\pi K_m$. Thus
\begin{align*}
    G_m & =
    \bigslant{ \pi(K_1\cs\cdots\cs K_m) }{ \langle\langle [x,x^{w_m w_{m-1}\cdots w_1}]\rangle\rangle } \\
    & \cong 
    \bigslant{ \pi(K_1\cs\cdots\cs K_{m-1}) }{ \langle\langle [x,x^{w_{m-1}\cdots w_1}]\rangle\rangle }
    = G_{m-1},
\end{align*}
and by induction $G_n\cong\mathbb{Z}$. The adaptation to $c>1$ is the same as in the previous case.
\end{proof}

\begin{remark}
    \label{examples} 
    There are many nontrivial examples of $2$-knots $K_1, \dots, K_n$ satisfying the hypotheses of \autoref{thm:nonadd}. For instance, the technical condition that the $j^\text{th}$ power of a meridian is central is satisfied by any $j$-twist spun knot \cite{zeeman1965twisting}. Indeed, Kanenobu uses twist-spun knots with coprime twist indices to construct his examples of strong algebraic non-additivity in \cite{kanenobu1996weak}. 
\end{remark}

Recall \autoref{max}, which says that for a pair of 2-knots $K_1,K_2$,
the algebraic lower bounds satisfy
$\max\{\alpha(K_1),\alpha(K_2)\}\leq \alpha(K_1 \cs K_2)\leq \alpha(K_1)+\alpha(K_2)$.
Kanenobu used his nonadditivity result for $\asta$ to prove the following theorem. We note that by \autoref{thm:nonadd}, his original examples work to prove the following corollary for $\alpha=\afw$ as well.

\begin{corollary}[Kanenobu]
    For any positive integers $p_1,\dots,p_n$ and any integer $q$ with $\max\{p_i\}\leq q\leq p_1+\cdots+p_n$, there exist 2-knots $K_1,\dots,K_n$ satisfying:
    \begin{enumerate}
        \item $\asta(K_i)=\afw(K_i)=p_i$ for all $i$, and
        \item $\asta(K_1\cs\cdots\cs K_n)=\afw(K_1\cs\cdots\cs K_n)=q$.
    \end{enumerate}
\end{corollary}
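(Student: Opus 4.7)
The plan is to invoke Kanenobu's construction from \cite{kanenobu1996weak}, which produces $2$-knots $K_1, \ldots, K_n$ as connected sums of twist-spins of $2$-bridge knots with coprime twist indices, and to verify that the same knots simultaneously realize the corresponding equalities for $\afw$. Kanenobu already established, via Nakanishi-index computations for the lower bound and a coprime-central-power argument for the upper bound, that these knots satisfy $\asta(K_i) = p_i$ and $\asta(K_1 \# \cdots \# K_n) = q$; the only task is to transfer both equalities from $\asta$ to $\afw$.

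For the upper bounds, each building block $J = \tau^j(k)$ used in the construction (with $k$ a $2$-bridge knot) has $\ufw(J) = 1$ by \autoref{thm:2bridgecw}, so $\afw(J) \leq 1$. Subadditivity of $\afw$ under connected sum (\autoref{max}) then gives $\afw(K_i) \leq p_i$. For the total connected sum, \autoref{thm:algnonadd} was proved uniformly for $\alpha \in \{\asta, \afw\}$, so Kanenobu's coprime-twist-index setup yields $\afw(K_1 \# \cdots \# K_n) \leq q$ by the very same mechanism he used for $\asta$.

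The matching lower bounds come immediately from \autoref{inequalities}, which gives $\asta(K) \leq \afw(K)$ for any $2$-knot $K$: hence $\afw(K_i) \geq \asta(K_i) = p_i$ and $\afw(K_1 \# \cdots \# K_n) \geq \asta(K_1 \# \cdots \# K_n) = q$, and combining these with the upper bounds above yields the desired equalities. The only real point to check is that Kanenobu's specific twist-index choices satisfy the coprime-central-power hypothesis of \autoref{thm:algnonadd} in exactly the form needed for $\afw$ rather than merely $\asta$; this is automatic because his $\asta$-argument proceeds by the same relation-identification trick, with the $\asta$-relation $[x, w] = 1$ algebraically equivalent to the $\afw$-relation $[x, x^w] = 1$ in the relevant quotients, as already noted in \autoref{sec:algmain}.
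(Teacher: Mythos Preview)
Your proposal is correct and matches the paper's own approach, which is simply to cite Kanenobu's construction for the $\asta$ statement and observe that \autoref{thm:algnonadd} (proved uniformly for both $\asta$ and $\afw$) together with the inequality $\asta\leq\afw$ from \autoref{inequalities} transfers everything to $\afw$; the paper gives no further detail than this.

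One small correction: your final sentence asserts that the $\asta$-relation $[x,w]=1$ is ``algebraically equivalent'' to the $\afw$-relation $[x,x^w]=1$ in the relevant quotients, citing \autoref{sec:algmain}. That is not what is said there---the observation is only that $[x,x^w]=1$ can be rewritten as the stabilization relation $[x,[x,w]]=1$, which yields $\asta\leq\afw$, not an equivalence. Fortunately you do not actually need this claim: the hypothesis of \autoref{thm:algnonadd} (coprime central meridian powers) is purely a condition on the knot groups and does not depend on which $\alpha$ you are computing, and the theorem is proved separately for $\afw$. So the last paragraph is superfluous and the slip there is harmless.
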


While these examples show that the algebraic Casson-Whitney index $\afw$ is non-additive on general $2$-knot groups, we do not know of any classical knot groups for which this is the case. This is in contrast with the algebraic stabilization number $\asta$, which fails to be additive for classical knots by \cite{miyazaki1986relationship} (see the discussion at the end of \autoref{sec:algebraic_lower_bounds}). Now, to extend these algebraic results on the non-additivity of $\asta$ and $\afw$ to their geometric counterparts $\ust$ and $\ufw$, we first relate these invariants through the following lemma.

%%%%%%%%%% FIG %%%%%%%%%%
\begin{figure}[ht]
    \centering
    \includegraphics[width=\linewidth]{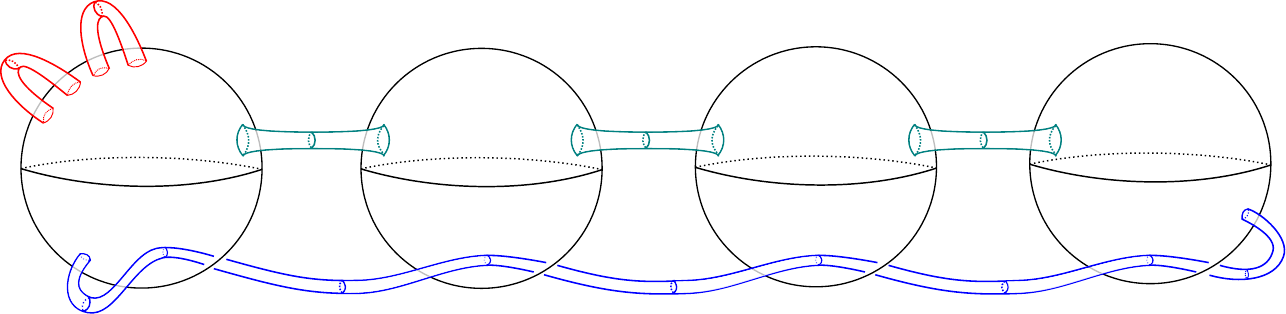}
    \put(-380,85){\small{$K_{1}$}}
    \put(-250,85){\small{$K_{2}$}}
    \put(-120,85){\small{$\cdots$}}
    \put(-10,85){\small{$K_{n}$}}
    \caption{
        A schematic for the proof of \autoref{algtogeolemma}, where $\ust(K_1)=2$ and $\asta(K_1\cs\cdots\cs K_n)=1$. The blue handle abelianizes the group of $K_1\cs\cdots\cs K_n$ and the trivial red handles allow us to inductively unknot each summand.
    }
    \label{fig:nonadditivity}
\end{figure}
%%%%%%%%%%%%%%%%%%%%%%%%%%%

\begin{lemma}
    \label{algtogeolemma}
    For $1\leq i \leq n$, let $K_i$ be a 2-knot and let
    $K=K_1\cs\cdots \cs K_n$. If $\ust(K_i) \leq c$ for each $i$,
    then $\ust(K) \leq c + \asta(K)$.
    Similarly, if $\ufw(K_i) \leq c$ for each $i$,
    then $\ufw(K) \leq c + \afw(K)$.
\end{lemma}

\begin{proof}
    We prove the statement
    for the stabilization number
    by induction on the number $n$ of summands. The proof for the Casson-Whitney number is similar. Indeed,
    it will be convenient for the inductive step to prove a slightly stronger statement: For each $n$, any connected sum $K=K_1 \cs \cdots \cs K_n$ can be unknotted by first stabilizing at least $\asta(K)$ times
    to obtain a surface $F$ with $\pi F\cong\mathbb{Z}$,
    and then by stabilizing $c$ times along guiding arcs which are necessarily trivial, since $\pi F$ is cyclic.
    This statement holds in the case $n=1$ since the guiding arcs for the trivial stabilizations can be isotoped in the complement of $F$ to be guiding arcs for a collection of $c$ stabilizations that smoothly unknot $K=K_1$. To proceed with the inductive step, we assume that the statement holds when $n=\ell$ and show that it holds when $n=\ell+1$. 
    
    %Let $K=K_1\cs\cdots\cs K_{\ell+1}$. Stabilize $K$ $\asta(K)$ times to obtain a surface $F$ with $\pi F\cong \mathbb{Z}$, as before. Since $\pi F\cong\mathbb{Z}$, the guiding arcs for the $c$ trivial stabilizations are isotopic in the complement of $F$ to guiding arcs for a different set of $c$ stabilizations which unknot the first summand $K_1$. Therefore, the surface resulting from $c$ trivial stabilizations of the surface $F$ is isotopic to the surface resulting from $c$ trivial stabilizations of a surface $F'$ obtained from $K_2 \cs \cdots \cs K_{\ell+1}$ by the same $\asta(K)$ stabilizations used to abelianize $\pi K$. It follows from the proof of \autoref{max} that these stabilizations also abelianize $\pi (K_2\cs\cdots\cs K_{\ell+1})$ once $\pi K_1$ has been abelianized, and so $\pi (F')\cong\mathbb{Z}$. Therefore by induction, $F'$ is unknotted by $c$ trivial stabilizations. 

    Let $K=K_1\cs\cdots\cs K_{\ell+1}$.
    Stabilize $K$ $\asta(K)$-times to obtain a surface $F$ with $\pi F\cong \mathbb{Z}$, as before, and let $F^\prime$ be the result of (trivially) stabilizing $F$ an additional $c$-times. We claim that $F^\prime$ is unknotted. Since $\pi F\cong\mathbb{Z}$, the guiding arcs for the $c$ trivial stabilizations are isotopic in the complement of $F$ to guiding arcs for a different set of $c$ stabilizations which unknot the first summand $K_1$. 
    Therefore, $F^\prime$ is the result of trivially stabilizing the surface $F^{\prime\prime}$ $c$-times, where $F^{\prime\prime}$ is the surface obtained from $K_2 \cs \cdots \cs K_{\ell+1}$ by the stabilizations induced by the original $\asta(K)$ stabilizations which produced $F$ from $K$. By the proof of \autoref{max} these stabilizations abelianize $\pi (K_2\cs\cdots\cs K_{\ell+1})$, so $\pi( F^{\prime\prime})\cong\mathbb{Z}$ and by induction the $c$ trivial stabilizations unknot $F^{\prime\prime}$.
    
    %Therefore, the surface resulting from $c$ trivial stabilizations of the surface $F$ is isotopic to the surface resulting from $c$ trivial stabilizations of a surface $F'$ obtained from $K_2 \cs \cdots \cs K_{\ell+1}$ by the same $\asta(K)$ stabilizations used to abelianize $\pi K$. It follows from the proof of \autoref{max} that these stabilizations also abelianize $\pi (K_2\cs\cdots\cs K_{\ell+1})$ once $\pi K_1$ has been abelianized, and so $\pi (F')\cong\mathbb{Z}$. Therefore by induction, $F'$ is unknotted by $c$ trivial stabilizations.
\end{proof} 

Our first examples of the non-additivity of the stabilization and Casson-Whitney number now follow as a corollary of \autoref{thm:nonadd} and \autoref{algtogeolemma}. 

\begin{corollary}
    \label{excor} 
    For $n\geq1$, consider the $j_i$-twist spins $K_i= \tau^{j_i}k_i$ of classical knots $k_1, \dots, k_n$, where each $k_i$ is either $2$-bridge or has unknotting number one, with pairwise coprime twist indices $j_i\geq 2$. Then,
    \begin{align*}
        & \ust(K_i) = \ufw(K_{i})  = 1 ~ \text{for all $i$,} \\
        & \ust(K_1\cs\cdots\cs K_n)  \leq 2 
        \text{ and }
        \ufw(K_1\cs\cdots\cs K_n)  \leq 2. 
    \end{align*}
\end{corollary}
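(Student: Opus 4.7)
My plan is to assemble the corollary from four ingredients appearing earlier in the paper: \autoref{thm:2bridgecw} and \autoref{thm:lower_bound_classical_unknotting} for the per-summand bound, Theorem~\ref{B} to bridge from $\ufw$ to $\ust$, \autoref{thm:algnonadd} for the algebraic bound on the connected sum, and \autoref{algtogeolemma} to lift that algebraic bound back to a geometric one. The two bullets decouple cleanly: the first is a case-by-case statement about each $K_i$ individually, while the second is essentially a direct application of the algebraic-to-geometric bridge developed in the previous lemma.

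For the first bullet, I would argue $\ufw(K_i) \leq 1$ in both cases. If $k_i$ is $2$-bridge, \autoref{thm:2bridgecw} gives $\ufw(K_i) = 1$ immediately, and if $u(k_i) = 1$ then \autoref{thm:lower_bound_classical_unknotting} supplies $\ufw(K_i) \leq u(k_i) = 1$. Combined with the nontriviality of the twist spin of a nontrivial classical knot with twist index $j_i \geq 2$ (so that $\ufw(K_i) \geq 1$), this yields $\ufw(K_i) = 1$. Theorem~\ref{B} then upgrades this to $\ust(K_i) = 1$, and hence $\upsilon(K_i) = 1$ for both choices of $\upsilon$.

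For the second bullet, I would verify the hypotheses of \autoref{thm:algnonadd} with $c = 1$. The key algebraic feature is that in a $j_i$-twist spin $K_i = \tau^{j_i} k_i$, the $j_i$-th power of any meridian lies in the center of $\pi K_i$; this is the classical property of twist-spun knots recalled in \autoref{examples}. Since the $j_i$ are pairwise coprime by assumption, and since $\alpha(K_i) \leq \upsilon(K_i) = 1$ by \autoref{basic}, \autoref{thm:algnonadd} applies and gives $\alpha(K_1 \# \cdots \# K_n) \leq 1$. Feeding this into \autoref{algtogeolemma} with $c = 1$ yields
\[
\upsilon(K_1 \# \cdots \# K_n) \leq c + \alpha(K_1 \# \cdots \# K_n) \leq 1 + 1 = 2,
\]
which is exactly the second bullet.

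The only nontrivial obstacle I anticipate is confirming that each $\ufw(K_i) \geq 1$, i.e.\ that the individual summands are genuinely knotted so that equality (not just $\leq 1$) holds in the first bullet. This is a standard nontriviality question about twist spins, resolved by assuming each $k_i$ is nontrivial and invoking familiar facts about twist-spinning (for instance, nontriviality of the determinant in the even-twist $2$-bridge case via \cite{joseph20190concordance}, or Zeeman's branched-cover fibration). Beyond this bookkeeping point, the proof is a mechanical assembly of the four ingredients above, with no new geometric input required.
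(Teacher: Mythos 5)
Your proposal is correct and follows essentially the same route as the paper: per-summand bounds from \autoref{thm:2bridgecw} and \autoref{thm:lower_bound_classical_unknotting}, then \autoref{thm:algnonadd} with $c=1$ (using the centrality of $x_i^{j_i}$ in twist spins) followed by \autoref{algtogeolemma} to get the bound of $2$ on the connected sum. Your explicit appeals to Theorem~\ref{B} for $\ust(K_i)=1$ and to nontriviality of the twist spins for the lower bound are details the paper leaves implicit, but they do not change the argument.
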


\begin{proof}
First note that by either
\autoref{thm:lower_bound_classical_unknotting} or \autoref{thm:2bridgecw} (depending on whether the knot $k_i$ is $2$-bridge or unknotting number one), $\ufw(K_i)=1$
for each $i$. So, it just remains to show that $\ust(K_1\cs\cdots\cs K_n) \leq 2$ and $\ufw(K_1\cs\cdots\cs K_n) \leq 2$. This follows from the previous results of this section. In particular, as noted in \autoref{examples} above, the twist spins $K_i$ have $\asta(K_i)=1$ as well as meridians $x_i \in \pi K_i$ such that $x_i^{j_i}\in Z(\pi K_i)$. Therefore, these knots satisfy the hypotheses of \autoref{thm:nonadd}, and so $\asta(K)= \afw(K)=1$ as well.
Now \autoref{algtogeolemma} applies, and we can conclude that both $\ust(K_1\cs\cdots\cs K_n), \ufw(K_1\cs\cdots\cs K_n) \leq 2$, as desired. 
\end{proof} 

Moreover, using a different family of twist spun $2$-knots, we formulate the more general non-additivity result featured in the introduction. 

% % % % % % % %
% Restating theorem 1.6 from the introduction
\nonadd*
% % % % % % % %

\begin{proof}
    Let $\nu \in \{ \ust, \ufw \}$.  For the $i^{\text{th}}$ prime $p_i \in \mathbb{N}$, let $K_i$ be the connected sum of $c$ copies of $\tau^{p_i} T(2,p_i)$, the $p_i$-twist spin of the $(2,p_i)$-torus knot. Since the Alexander module of each summand $\tau^{p_i} T(2,p_i)$ is cyclic, the Nakanishi index $m(K_i)$ of the connected sum is equal to $c$. This matches the upper bound for $\nu$ given by \autoref{thm:2bridgecw},
    and so $\nu(K_i)=c$. Now, each $K_i$ can also be thought of as a single $p_i$-twist spin of the connected sum of $c$ copies of $T(2,p_i)$. Therefore $K=K_1\cs\cdots\cs K_n$ is a connected sum of twist-spun knots with coprime twist indices, and so \autoref{thm:nonadd} applies to show that $\asta(K)=c$. Then by \autoref{algtogeolemma}, $\nu(K)\leq 2c$.
\end{proof}

The proof of the next corollary follows from \autoref{thm:lower_bound_classical_unknotting},
\autoref{thm:2bridgecw}, and \autoref{algtogeolemma}.

\begin{corollary}
    Let $n \in \mathbb{N}$ and let $k_1,\dots,k_n$ be 1-knots,
    each either 2-bridge or with unknotting number one.
    Let $j_1,\dots,j_n$ be coprime integers at least 2 and let $K_i=\tau^{j_i}k_i$. Then $\nu(K_i)=1$ for all $i$ and $\nu(K_1\cs\cdots\cs K_n)\leq 2$,
    where $\nu \in \{\ust, \ufw\}$.
\end{corollary}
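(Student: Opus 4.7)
The plan is to prove the two assertions separately, assembling results already established earlier in the paper. For the first assertion $\upsilon(K_i)=1$, I would split into cases depending on whether $k_i$ is $2$-bridge or has classical unknotting number one. If $k_i$ is $2$-bridge then \autoref{thm:2bridgecw} gives $\ufw(K_i)=1$ directly, while if $u(k_i)=1$ then \autoref{thm:lower_bound_classical_unknotting} gives $\ufw(K_i)\le u(k_i)=1$. In either case Theorem \ref{B} upgrades this to $\ust(K_i)=1$, so $\upsilon(K_i)=1$ whether $\upsilon=\ufw$ or $\upsilon=\ust$.

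For the bound $\upsilon(K_1\#\cdots\# K_n)\le 2$, the strategy is to combine \autoref{thm:algnonadd} with \autoref{algtogeolemma}. By Zeeman's foundational result on twist-spun knots, each $K_i=\tau^{j_i}k_i$ admits a meridian $x_i$ with $x_i^{j_i}\in Z(\pi K_i)$. Since the twist indices are pairwise coprime and $\alpha(K_i)\le\upsilon(K_i)=1$ by \autoref{basic}, the hypotheses of \autoref{thm:algnonadd} are met with $c=1$, yielding $\alpha(K_1\#\cdots\# K_n)\le 1$ for both $\alpha=\asta$ and $\alpha=\afw$. Plugging this into \autoref{algtogeolemma} (again with $c=1$, justified by the first assertion) gives $\upsilon(K_1\#\cdots\# K_n)\le 1+\alpha(K_1\#\cdots\# K_n)\le 2$, as required.

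This is essentially an assembly argument and there is no single hard step, but two subtleties are worth flagging. First, the assertion $\ust(K_i)=1$ genuinely requires Theorem \ref{B}: Theorem \ref{A} alone would only give $\ust(K_i)\le\ufw(K_i)+1=2$, and this slack would propagate through \autoref{algtogeolemma} to give only $\ust(K_1\#\cdots\# K_n)\le 3$, strictly weaker than what we want. Second, the coprimality of the $j_i$ is essential in invoking \autoref{thm:algnonadd}: the B\'ezout identity on the twist indices is precisely what allows a single meridional relation to simultaneously abelianize each connected-summand's knot group in its proof.
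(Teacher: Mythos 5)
Your proposal is correct and follows essentially the same route as the paper: $\ufw(K_i)=1$ from \autoref{thm:2bridgecw} or \autoref{thm:lower_bound_classical_unknotting}, the Zeeman centrality of $x_i^{j_i}$ plus coprimality feeding \autoref{thm:algnonadd} to get $\alpha(K_1\#\cdots\#K_n)\le 1$, and then \autoref{algtogeolemma} with $c=1$ to conclude $\upsilon(K_1\#\cdots\#K_n)\le 2$. Your explicit appeal to Theorem \ref{B} to upgrade $\ufw(K_i)=1$ to $\ust(K_i)=1$ is exactly the step the paper leaves implicit, so there is nothing to correct.
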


%%%%%%%%%%%%%%%%%%%%%%%%%%%%%
%%%%%%%% Questions %%%%%%%%%%
%%%%%%%%%%%%%%%%%%%%%%%%%%%%%

% % % % % % % % % % % % % % % %
\section{Questions}
\label{questions}
% % % % % % % % % % % % % % % %

\noindent Here we present some questions that remain.

\begin{enumerate}[label*=\arabic*.]

    \item Is $\ust \leq \ufw$, $\ust=\asta$, or $\ufw=\afw$?
    A $2$-knot $K$ with $\ust(K) > \ufw(K)$ or $\ust(K) >\asta(K)$
    would yield a counterexample to the conjecture that smoothly embedded orientable
    surfaces in $S^4$ with knot group $\mathbb{Z}$ are
    smoothly unknotted,\footnote{
        And in fact, it is only known in
        genus $0$ \cite[Thm.\ 11.7A]{freedman-quinn:4-manifolds}
        or if the genus is $\ge 3$ \cite{conway2020embedded}
        that such surfaces are even \emph{topologically} unknotted.
    }
    since in both cases a surface could be obtained whose complement has cyclic fundamental group but which is not smoothly unknotted. 
    On the other hand, a $2$-knot $K$ with $\ufw(K) > \afw(K)$
    would give an immersed $2$-sphere $\Sigma^*$ with
    $\pi_1(S^{4} - \Sigma^*) \cong \mathbb{Z}$
    that is \emph{not} the result of finger moves on the unknot.

    \item Is having a regular homotopy to the unknot
    where the boundaries of the knotted and standard Whitney disks agree
    (as in the proof of \autoref{thm:fusion_upper_bound})
    a characterization of ribbon 2-knots?

    \item Given a $2$-knot $K$ in $S^4$, are Singh's invariants $d_{\textrm{st}}(K)$ and
    $d_{\textrm{sing}}(K)$ from \cite{singh2019distances} ever greater than 1?

    \item Does there exist a 2-knot $K$ such that $\ufw(K)-\ust(K)>1$?
    
    \item Are Casson-Whitney number one 2-knots $K$ ``algebraically prime'', i.e.\ if $K=K_1\cs K_2$, then at least one summand $K_1$ or $K_2$ has knot group $\mathbb{Z}$? 

    \item
    Are pairs of $2$-knots in $S^4$
    always related by an arc-standard regular homotopy?
    Recall that in the proof of \autoref{thm:2}, we prove this for $2$-knots with a length $1$ regular homotopy to the unknot, by starting with a homotopy for which all pairs of knotted and unknotted arcs in the pre-image of the standard immersion have the same endpoints, and then performing `standard braid twists' and isotopies rel endpoints until certain pairs of arcs agree. However, even allowing additional manipulations like `slides' of Whitney disks (as in Figure $4$ of \cite{schneiderman2019homotopy}), such an argument seems to fail for certain initial configurations of Whitney arcs for regular homotopies of higher length, including the one in
    \autoref{fig:boundary_arcs_question}. Thus we ask: can the arcs in \autoref{fig:boundary_arcs_question}
   actually appear as the pre-images of the knotted
    and standard Whitney arcs of a 
   length $2$ regular homotopy from a 2-knot $K$ with $\ufw(K)=2$ to the unknot?
\end{enumerate}

%%%%%%%%%% FIG %%%%%%%%%%
\fig{150}{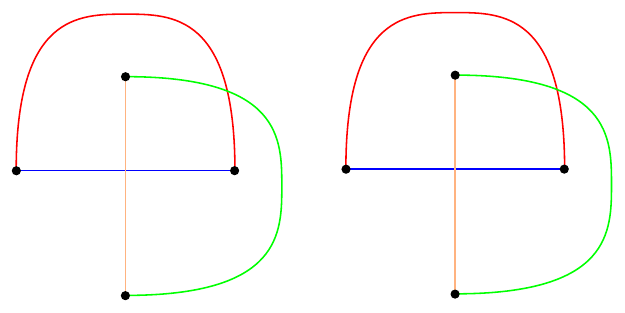}{
    \put(-305,120){\textcolor{red}{$\alpha_1$}}
    \put(-280,75){\textcolor{blue}{$\beta_1$}}
    \put(-235,80){\textcolor{orange}{$\alpha_2$}}
    \put(-160,30){\textcolor{green}{$\beta_2$}}
    \put(-145,120){\textcolor{red}{$\alpha_1^{\ast}$}}
    \put(-120,75){\textcolor{blue}{$\beta_1^{\ast}$}}
    \put(-75,80){\textcolor{orange}{$\alpha_2^{\ast}$}}
    \put(0,30){\textcolor{green}{$\beta_2^{\ast}$}}
    \caption{
        Can the red \& orange above appear as the standard Whitney arcs and the blue \& green as the
        knotted Whitney arcs in a regular homotopy
        to the unknot?
    }
    \label{fig:boundary_arcs_question}
}
%%%%%%%%%%%%%%%%%%%%%%%%%%%

% % % % % % %
% References
\printbibliography
% % % % % % %

\end{document}